\begin{document}

\newtheorem{theorem}{Theorem}[section]
\newtheorem{tha}{Theorem}
\newtheorem{algorithm}[theorem]{Algorithm}
\newtheorem{conjecture}[theorem]{Conjecture}
\newtheorem{corollary}[theorem]{Corollary}
\newtheorem{lemma}[theorem]{Lemma}
\newtheorem{claim}[theorem]{Claim}
\newtheorem{proposition}[theorem]{Proposition}
\newtheorem{construction}[theorem]{Construction}
\newtheorem{definition}[theorem]{Definition}
\newtheorem{question}[theorem]{Question}
\newtheorem{problem}[theorem]{Problem}
\newtheorem{remark}[theorem]{Remark}
\newtheorem{observation}[theorem]{Observation}

\newcommand{\ex}{{\mathrm{ex}}}

\newcommand{\EX}{{\mathrm{EX}}}

\newcommand{\AR}{{\mathrm{AR}}}

\def\endproofbox{\hskip 1.3em\hfill\rule{6pt}{6pt}}
\newenvironment{proof}%
{%
\noindent{\it Proof.}
}%
{%
 \quad\hfill\endproofbox\vspace*{2ex}
}
\def\qed{\hskip 1.3em\hfill\rule{6pt}{6pt}}

\newcommand{\norm}[1]{\lVert#1\rVert}
\def\ce#1{\lceil #1 \rceil}
\def\fl#1{\lfloor #1 \rfloor}
\def\lr{\longrightarrow}
\def\e{\epsilon}
\def\cB{{\cal B}}
\def\cC{{\cal C}}
\def\cD{{\cal D}}
\def\cF{{\cal F}}
\def\cG{{\cal G}}
\def\cH{\mathcal{K}}
\def\cK{{\cal K}}
\def\cI{{\cal I}}
\def\cJ{{\cal J}}
\def\cL{{\cal L}}
\def\cM{{\cal M}}
\def\cP{{\cal P}}
\def\cQ{{\cal Q}}
\def\cS{{\cal S}}
\def\cT{{\cal T}}
\def\imp{\Longrightarrow}
\def\1e{\frac{1}{\e}\log \frac{1}{\e}}
\def\ne{n^{\e}}
\def\rad{ {\rm \, rad}}
\def\equ{\Longleftrightarrow}
\def\pkl{\mathbb{P}^{(k)}_l}
\def\podd{\mathbb{P}^{(k)}_{2t+1}}
\def\peven{\mathbb{P}^{(k)}_{2t+2}}
\def\wt{\widetilde}
\def\wh{\widehat}
\def\Tr{T_r}
\def\tr{t_r}
\def\tx{\tilde{x}}
\def\ty{\tilde{y}}
\def\cmr{\frac{[m]_r}{m^r}}
\def\ckr{\frac{(k+r-3)_r}{(k+r-3)^r}}
\def\ve{\varepsilon}
\def\a{\alpha}
\def\b{\mathbb{B}}
\def\wb{\widetilde{\b}_n}
\def\hd{\hat{\delta}}

\def\band{[\frac{n}{2}-{2\sqrt{n\ln n}}, \, \frac{n}{2}+{2\sqrt{n\ln n}}]}
\def \gap{{2\sqrt{n\ln n}}}

\voffset=-0.5in

\title{Stability and Tur\'an numbers of a class of hypergraphs via Lagrangians}
\author{
Axel Brandt\thanks{Dept. of Mathematics, University of Colorado Denver, Denver, CO, USA, E-mail: axel.brandt@ucdenver.edu.}
 \quad \quad
David Irwin\thanks{Dept. of Mathematics, Ohio State University, Columbus, OH, USA,
E-mail: irwin.315@osu.edu.} \quad \quad 
Tao Jiang\thanks{Dept. of Mathematics, Miami University, Oxford,
OH, USA. E-mail: jiangt@miamioh.edu. Research supported in part by
National Science Foundation grant DMS-1400249.
 \newline\indent
{\it 2010 Mathematics Subject Classifications:}
05C65, 05C35, 05D05.\newline\indent
{\it Key Words}:  Tur\'an number, hypergraph lagrangian, symmetrization, stability, expanded clique, generalized fan.}
}
\date{October 10, 2015}

\maketitle

\begin{abstract}
Given a family of $r$-uniform hypergraphs $\cF$ (or $r$-graphs for brevity), the
Tur\'an number $ex(n,\cF)$ of $\cF$ is the maximum number of edges in an $r$-graph
on $n$ vertices that does not contain any member of $\cF$. A pair $\{u,v\}$ is {\it covered} in a hypergraph $G$ if some edge of $G$ contains $\{u,v\}$.
Given an $r$-graph $F$ and a positive integer $p\geq n(F)$, let $H^F_p$ denote the 
$r$-graph obtained as follows. Label the vertices
of $F$ as $v_1,\ldots, v_{n(F)}$. Add new vertices $v_{n(F)+1},\ldots, v_p$. For
each pair of vertices $v_i,v_j$ not covered in $F$, add a set $B_{i,j}$ of $r-2$ new vertices and the edge $\{v_i,v_j\}\cup B_{i,j}$, where the $B_{i,j}$'s are pairwise
disjoint over all such pairs $\{i,j\}$. We call $H^F_p$ {\it the expanded $p$-clique with an embedded $F$}. For a relatively large family of $F$, we show that for all sufficiently large $n$, $ex(n,H^F_p)=|T_r(n,p-1)|$, where $T_r(n,p-1)$ is the balanced complete $(p-1)$-partite $r$-graph on $n$ vertices. We also establish structural stability of near extremal graphs. 
Our results generalize or strengthen several earlier results and provide
a class of hypergraphs for which the Tur\'an number is exactly determined (for large $n$).

\end{abstract}

%%%%%%%%%%%%%%%%%%%%%%%%%%%%%%%%%%%%%%

\section{Introduction}

Given a family of $r$-uniform hypergraphs $\cF$ (or $r$-graphs for brevity), the
Tur\'an number $ex(n,\cF)$ of $\cF$ is the maximum number of edges in an $r$-graph
on $n$ vertices that does not contain any member of $\cF$. The Tur\'an density $\pi(\cF)$
of $\cF$ is defined to be $\lim_{n\to \infty} ex(n,\cF)/\binom{n}{r}$; such a limit is known to exist. Determining Tur\'an numbers of graphs and hypergraphs is one of the central problems in extremal combinatorics.
For $r=2$, the problem was asymptotically solved for all non-bipartite graphs in form
of the Erd\H{o}s-Stone-Simonovits Theorem that states that if $\cF$ is a family of graphs and the minimum chromatic number among all members is $p\geq 3$ then $\pi(\cF)=\frac{p-2}{p-1}$.  For $r\geq 3$, not too much is known. There are  very few exact or asympotitic results. For a recent 
 on hypergraph Tur\'an numbers, the reader is referred to the survey of Keevash \cite{keevash}. In this paper, we build on earlier works of Sidorenko \cite{sidorenko}, Pikhurko \cite{pikhurko,p}, Mubayi \cite{mubayi},  and Mubayi and Pikhurko
\cite{MP} to obtain a general theorem that determines the exact Tur\'an numbers
of a class of hypergraphs for all sufficiently large $n$. Our main theorems substantially generalize or strengthen several earlier results.
%%%%%%%%%%%%%%%%%%%%%%%%%%%%%%%%%%%%%%%%%%%%%%%%%%%%%%%%%%%%%%%%%%%%%
\section{History}

\subsection{Cancellative hypergraphs}
The study of Tur\'an numbers dates back to Mantel's theorem which states
$ex(n,K_3)=\fl{\frac{n}{2}}\cdot \ce{\frac{n}{2}}$. Katona \cite{katona} suggests an extension of the problem to hypergraphs. An $r$-graph $G$ is called {\it cancellative}
if for any three edges $A,B,C$ satisfying $A\cup B=A\cup C$ we have $B=C$.
Equivalently, $G$ is cancellative if it does not contain three distinct members $A,B,C$
such that one contains the symmetric difference of the other two. When $r=2$ the condition is equivalent to saying that $G$ is triangle-free. Katona asked to determine
the largest size of a cancellative $3$-graph on $n$ vertices. The problem was solved by
Bollob\'as \cite{bollobas}, who showed that for all $n$, the largest size of a cancellative $3$-graph on $n$ vertices is the balanced complete $3$-partite $3$-graph on $n$ vertices. Keevash and Mubayi \cite{KM} gave a new proof of Bollob\'as' result and
established stability of near extremal graphs, showing that all cancellative $3$-graphs
on $n$ with close to the maximum number of edges must be structurally close to
the complete balanced $3$-partite $3$-graph.
Bollob\'as \cite{bollobas} conjectured that for all $r\geq 4$, the largest cancellative $r$-graph on $n$ vertices is the balanced complete $r$-partite $r$-graph on $n$ vertices.
This was proved to be true for $r=4$ by Sidorenko \cite{sidorenko}. However Shearer \cite{shearer} gave counterexamples
showing that the conjecture is false for $r>10$.

\subsection{Generalized triangles}

Frankl and F\"uredi \cite{FF-F5, FF} considered a strengthening of cancellative $r$-graphs. For each $r\geq 2$, let $\sum_r$ consist of all $r$-graphs with three edges $D_1,D_2,D_3$
such that $|D_1\cap D_2|=r-1$ and $D_1\triangle D_2\subseteq D_3$, where
$D_1\triangle D_2$ denotes the symmetric difference of $D_1$ and $D_2$. Let
the {\it generalized triangle} $T_k$ be the member of $\sum_r$ with edges
$\{1,\ldots, r\}, \{1,2,\ldots, r-1, r+1\}$, and $\{r,r+1,r+2,\ldots, 2r-1\}$.
For sufficiently large $n$, Frankl and F\"uredi \cite{FF-F5} showed that
$ex(n,\sum_3)=ex(n,T_3)=\fl{\frac{n}{3}}\cdot \fl{\frac{n+1}{3}}\cdot
\fl{\frac{n+2}{3}}$, with the extremal graph being the balanced $3$-partite $3$-graph on $n$ vertices. In \cite{FF},
Frankl and F\"uredi  determined the exact value of $ex(n,\sum_5)$ for all $n$ divisible by $11$ and the exact value of $ex(n,\sum_6)$ for all $n$ divisible by $12$. For these $n$,
the extremal graphs are blow-ups of the unique $(11,5,4)$ and $(12,6,5)$ Steiner systems. Frankl and F\"uredi \cite{FF} conjectured that for all $r\geq 4$, if $n\geq n_0(r)$ is sufficiently large then $ex(n,\sum_r)=ex(n,T_r)$. Pikhurko \cite{pikhurko} proved the conjecture for $r=4$, showing that $ex(n,\sum_4)=ex(n,T_4)=\fl{\frac{n}{4}}\cdot
\fl{\frac{n+1}{4}}\cdot \fl{\frac{n+2}{4}}\cdot \fl{\frac{n+3}{4}}$, with the balanced
complete $4$-partite $4$-graph on $n$ vertices being the unique extremal graph.
Recently, Norin and Yepremyan \cite{NY} proved Frankl and F\"uredi's conjecture 
for $r=5$ and $r=6$.

\subsection{Expanded cliques and generalized fans}

Given a hypergraph $H$ and a pair $\{x,y\}$ of vertices in $H$, we say that
$\{x,y\}$ is {\it covered} in $H$ if some edge in $H$ contains both $x$ and $y$.
Let $T_r(n,\ell)$ denote the complete $\ell$-partite $r$-graph on $n$ vertices where
no two parts differ by more than one in size.
Mubayi \cite{mubayi} considered the Tur\'an problem for the following family of $r$-graphs. For all $p\geq r\geq 2$ let $\cK^r_p$ denote the family of $r$-graphs
$H$ that contains a set $C$ of $p$ vertices such that every pair in $C$ is covered in $H$.
Let $H^r_p$ denote the unique member of $\cK^r_p$ with edge set $\{\{i,j\}\cup B_{i,j}$: $\{i,j\}\in\binom{[p]}{2}\}$, where the $B_{i,j}$'s are pairwise disjoint $(r-2)$-sets outside $[p]$. We call $H^r_p$  the {\it $r$-uniform expanded $p$-clique}.
For all $n,p,r$, Mubayi \cite{mubayi} showed that $ex(n,\cK^r_p)=e(T_r(n,p-1))$
with the unique extremal graph being $T_r(n,p-1)$. Mubayi further established strucutral stability of near extremal $\cK^r_p$-free graphs. Using this stability property, Pikhurko
\cite{pikhurko} later strengthened Mubayi's result to show that $ex(n, H^r_p)=
e(T_r(n,p-1))$ for all sufficiently large $n$.

Mubayi and Pikhurko \cite{MP} considered the Tur\'an problem for so-called generalized fans. Let $Fan^r$ be the $r$-graph compromising $r+1$ edges $e_1,\ldots, e_r,e$
such that $e_i\cap e_j=\{x\}$ for all $i\neq j$, where $x\notin e$, and $|e_i\cap e|=1$
for all $i$. Note that $Fan^2$ is precisely a triangle. Mubayi and Pikhurko showed that
for all $r\geq 3$ and all sufficiently large $n$, $ex(n,Fan^r)=e(T_r(n,r))=\prod_{i=1}^{r}\fl{\frac{n+i-1}{r}}$.
%%%%%%%%%%%%%%%%%%%%%%%%%%%%%%%%%%%%%%%%%

\section{The general problem on $\cH^F_p$ and $H^F_p$}

The problems mentioned in the previous section can be generalized as follows,
as discussed in Keevash \cite{keevash}. Let $r\geq 3$. Let $F$ be an $r$-graph.
Let $p\geq n(F)$. Let $\cH^F_p$ denote the family of $r$-graphs $H$ that contains
a set $C$ of $p$ vertices, called the {\it core}, such that the subgraph of $H$ induced by $C$ contains a copy of $F$ and such that every pair in $C$ is covered in $H$. Let $H^F_p$ be the member of $\cH^F_p$ obtained as follows. We label the vertices
of $F$ as $v_1,\ldots, v_{n(F)}$. Add new vertices $v_{n(F)+1},\ldots, v_p$. 
Let $C=\{v_1,\ldots, v_p\}$. For
each pair of vertices $v_i,v_j\in C$ not covered in $F$, we add a set $B_{i,j}$ of $r-2$ new vertices and the edge $\{v_i,v_j\}\cup B_{i,j}$, where the $B_{i,j}$'s are pairwise
disjoint over all such pairs $\{i,j\}$. We call $H^F_p$ an {\it expanded $p$-clique with
an embedded $F$}. We call $C$ the {\it core} of $H^F_p$.

Using this notation, we can describe the families of graphs considered in the last section as follows. Let $L$ denote the $r$-graph on $r+1$ vertices consisting of two edges
sharing $r-1$ vertices. Then $\cH^L_{r+1}=\sum_r$ and $H^L_{r+1}=T_r$, the generalized triangle. If $F$ is the $r$-uniform empty graph then $\cH^F_p=\cK^r_p$ and $H^F_p=H^r_p$, the $r$-uniform expanded $p$-clique.
Let $e$ denote a single $r$-set, then $H^e_{r+1}=Fan^r$, the $r$-uniform generalized fan. Our main results in this paper determine the exact value of $ex(n,\cH^F_p)$ and 
$ex(n,H^F_p)$ and establish stability of near extremal graphs for a rather wide family
of $F$. Let us also mention that very recently, Hefetz and Keevash \cite{HK} completely
determined $ex(n,H^{M_2}_6)$ for large $n$ (together with stability), where $M_2$ consists of two disjoint triples.

%%%%%%%%%%%%%%%%%%%%%%%%%%%%%%%%%%%%%%%%

\section{Notations and definitions}

Before introducing our main results, we give some notations and definitions that will be used throughout the paper.
Given a hypergraph $G$ and a set $S$ of vertices, the {\it link graph } of $S$ in $G$,
denoted by $\cL_G(S)$ is the hypergraph with edge set $\{f: f\subseteq V(G)\setminus S, f\cup S\in G\}$. We write $\cL_G(u)$ for $\cL_G(\{u\})$. 
The {\it degree} of $S$ in $G$, denoted by $d_G(S)$, is the number of edges of $G$ that contain $S$, i.e. $d_G(S)=|\cL_G(S)|$. We denote the minimum vertex degree of $G$ by $\delta(G)$.

Let $p\geq 1$ be an integer.
The {\it $p$-shadow} of $G$, denoted by $\partial_p(G)$, is the set of $p$-sets that are contained in edges of $G$, i.e. $\partial_p(G)=\{f: |f|=p, \exists\, e\in G\,  f \subseteq e\}$. Let $m\geq r\geq 1$ be positive integers. Let $[m]_r$  denote the falling factorial $m(m-1)\cdots m(m-r+1)$. 

A hypergraph $G$ {\it covers pairs} if every pair of its vertices is contained in some edge.
If $G$ is a hypergraph and $S$ is a set of vertices in it, then $G[S]$ denotes the subgraph of $G$ induced by $S$.
%%%%%%%%%%%%%%%%%%%%%%%%%%%%%%%%%%%%%%%%%
\section {Hypergraph Lagrangians and Lagrangian density}
In order to describe our results, we need the notion of lagrangians for hypergraphs.
To motivate the notion of hypergraph Lagrangians, we first review the usual hypergraph symmetrization process and some of its properties.
Two vertices $u,v$ in a hypergraph $H$ are {\it nonadjacent} if $\{u,v\}$ is not covered in $H$.
Given a hypergraph $H$ and two nonadajcent vertices $u$ and $v$  in it,
{\it symmetrizing $v$ to $u$} is
the operation that removes all the edges of $H$ containing $v$ and replaces them
with $\{v\cup D: D\in \cL_H(u)\}$. In other words, we make $v$ a clone of $u$.
The following property is implicit in \cite{keevash}. We re-establish it for completeness.

\begin{proposition} \label{H-free}
Let $p,r$ be positive integers, where $p\geq r+1$.
Let $F$ be an $r$-graph with $n(F)\leq p$ and $G$ an $r$-graph that is
$\cH_p^F$-free. Let $u,v$ be two nonadjacent vertices in $G$. Let $G'$ be obtained
from $G$ by symmetrizing $v$ to $u$. Then $G'$ is also $\cH_p^F$-free.
\end{proposition}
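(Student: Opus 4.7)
The plan is to argue by contradiction. Suppose $G'$ contains a member of $\cH_p^F$; unpacking the definition (by bundling the covering edges together with an $F$-copy into a single subgraph), this is equivalent to the existence of a set $C \subseteq V(G')$ with $|C| = p$ such that $G'[C]$ contains a copy of $F$ and every pair inside $C$ is covered by some edge of $G'$. My goal is to produce such a set in $G$ as well, which will contradict the hypothesis.

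The starting observation I would make is that $u$ and $v$ remain nonadjacent in $G'$. Any surviving edge containing $u$ was already an edge of $G$ and hence avoids $v$, while each newly inserted edge has the form $\{v\}\cup D$ with $D \in \cL_G(u) \subseteq 2^{V(G)\setminus\{u\}}$ and hence avoids $u$. In particular, $u$ and $v$ cannot both lie in $C$.

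The main step is a case split. If $v \notin C$, then every edge of $G'$ contained in $C$ already lies in $G$, so $G[C] = G'[C] \supseteq F$; a pair $\{x,y\} \subseteq C$ covered in $G'$ by an edge $e$ with $v \notin e$ is covered in $G$ by the same edge, while if $e = \{v\} \cup D$, then $\{x,y\} \subseteq D$ and the identity $\cL_{G'}(v) = \cL_G(u)$ yields $\{u\} \cup D \in G$ covering $\{x,y\}$, so $C$ itself witnesses a bad configuration in $G$. If instead $v \in C$ (and hence $u \notin C$ by the previous paragraph), I pass to $C' := (C \setminus \{v\}) \cup \{u\}$. The substitution $v \mapsto u$ sends each edge of $G'$ contained in $C$ to an edge of $G$ contained in $C'$ (edges $\{v\} \cup D$ map to $\{u\} \cup D \in G$, edges avoiding $v$ remain unchanged) and is injective since $u \notin C$. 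Hence $G[C']$ contains an isomorphic image of $G'[C]$, and in particular a copy of $F$. For coverings, a pair $\{x,y\} \subseteq C'$ with $u \notin \{x,y\}$ lies in $C \setminus \{v\}$ and is handled exactly as in the first case, while a pair $\{u,y\}$ with $y \in C \setminus \{v\}$ is covered in $G$ by applying $v \mapsto u$ to the $G'$-edge covering $\{v,y\} \subseteq C$, which necessarily has the form $\{v\} \cup D$ with $y \in D$.

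I do not anticipate a serious obstacle: the entire argument simply exploits the identity $\cL_{G'}(v) = \cL_G(u)$ built in by the symmetrization. The one subtlety worth stressing is the preliminary nonadjacency of $u$ and $v$ in $G'$, which is precisely what forces $u \notin C$ in the $v \in C$ case and thereby makes the substitution $v \mapsto u$ well-defined and injective on the relevant edges.
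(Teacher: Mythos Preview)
Your proof is correct and follows essentially the same approach as the paper: observe that $u$ and $v$ remain nonadjacent in $G'$ so at most one of them lies in the core $C$, then use the substitution $v\mapsto u$ on edges (and on $C$ itself when $v\in C$) to transport the witnessing configuration from $G'$ back to $G$. The paper's version is slightly more compressed (defining $f(e)=(e\setminus\{v\})\cup\{u\}$ uniformly and handling both cases at once), but the content is identical.
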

\begin{proof}
First note that $u,v$ have codegree $0$ in $G'$.
Suppose for contradiction that $G'$ contains a member $H$ of $\cH_p^F$ with
$C$ being its core. Since $u,v$ have codegree $0$ in $G'$ and
every pair in $C$ is covered in $H\subseteq G'$, $C$ contains at most one of $u$ and $v$.  For each $e\in H$, if $v\notin e$ let $f(e)=e$ and if $v\in e$ let $f(e)=(e\setminus \{v\})\cup \{u\}$. Let $L=\{f(e): e\in H\}$. Then $L\subseteq G$ and $L$ is
a member of $\cH_p^F$ with either $C$ (if $v\notin C$) or $(C\setminus \{v\})\cup \{u\}$ (if $v\in C$) being the core. This contradicts $G$ being $\cH_p^F$-free.
\end{proof}

Given an $r$-graph $G$ and two nonadjacent vertices $u,v$, if $\cL_G(u)=\cL_G(v)$ then
we say that $u$ and $v$ are {\it equivalent}, and write $u\sim v$. Otherwise we say that $u,v$ are {\it non-equivalent}. Note that $\sim$ is
an equivalence relation on $V(G)$. The {\it equivalence class} of a vertex $v$ consists of
all the vertices that are equivalent to $v$.
\begin{algorithm}{\rm (Symmetrization without cleaning)}
{\rm Let $G$ be an $r$-graph. We perform the following as long as $G$ contains two nonadjacent non-equivalent vertices: let $u,v$ be two such vertices where $d(u)\geq d(v)$, we symmetrize each vertex in the equivalence class of $v$ to $u$. We terminate the process when there exists no more nonadjacent non-equivalent pair.
}
\end{algorithm}
Note that the algorithm always terminates since the number of equivalence classes
strictly decreases after each step that can be performed.

As usual, if $V_1,\ldots, V_s$ are disjoint sets of vertices then $\Pi_{i=1}^s V_i=
V_1\times V_2\times\ldots \times V_s=\{(x_1,x_2,\ldots, x_s): \forall i=1,\ldots,s, x_i\in V_i\}$.
We will abuse notation and use $\Pi_{i=1}^s V_i$ to also denote the set of the corresponding unordered $s$-sets.
If $L$ is a hypergraph on $[m]$, then a {\it blowup} of $L$ is a hypergraph $G$
whose vertex set can be partitioned into $V_1,\ldots, V_m$ such that 
$E(G)=\bigcup_{e\in E(L)} \prod_{i\in e} V_i$.
The following proposition follows immediately from the algorithm.
\begin{proposition} \label{symmetrization-properties}
Let $G$ be an $r$-graph and $G^*$ the graph obtained at the end of the symmetrization
process applied to $G$. Then 
\begin{enumerate}
\item $e(G)\leq e(G^*)$.
\item Let $S$ consist of one vertex from each equivalence class of $G^*$ under $\sim$. Then
$G^*[S]$ covers pairs and $G^*$ is a blowup of $G^*[S]$.
\end{enumerate}
\end{proposition}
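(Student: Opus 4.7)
The plan is to analyze one symmetrization step for part (1), and then read off part (2) directly from the termination condition of the algorithm.

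For (1), I would focus on a single step, in which the algorithm picks a nonadjacent non-equivalent pair $u,v$ with $d(u)\geq d(v)$ and replaces every $w\in [v]$ by a clone of $u$. The key sublemma I would establish first is that each $w\in [v]$ is itself nonadjacent to $u$: since $w\sim v$ gives $\cL_G(w)=\cL_G(v)$, any edge covering $\{w,u\}$ would, after swapping $w$ for $v$, produce an edge covering $\{v,u\}$, contradicting the choice of $u,v$. Combined with the fact that $[v]$ is pairwise nonadjacent (by the very definition of the equivalence class), this sublemma gives a clean count: before the step the edges of $G$ touching $[v]$ number $|[v]|\cdot d_G(v)$, after the step they number $|[v]|\cdot d_G(u)$, and the rest of the graph is untouched. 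Since $d_G(u)\geq d_G(v)$, the edge count cannot decrease, and summing over all steps proves (1).

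For (2), let $V_1,\ldots,V_m$ be the equivalence classes of $G^*$ and let $S=\{s_1,\ldots,s_m\}$ with $s_i\in V_i$. By the termination condition every two non-equivalent vertices of $G^*$ are adjacent, so each pair $\{s_i,s_j\}$ with $i\neq j$ is covered by some edge of $G^*$. To upgrade this to the blowup statement I would verify two observations: (a) no edge of $G^*$ meets any $V_j$ in more than one vertex, which is immediate because equivalent vertices are nonadjacent; and (b) if $e\in E(G^*)$, $v\in e\cap V_j$, and $v'\in V_j$, then $(e\setminus\{v\})\cup\{v'\}\in E(G^*)$, since $\cL_{G^*}(v)=\cL_{G^*}(v')$. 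Together (a) and (b) say exactly that $G^*$ is a blowup of $G^*[S]$, and the covering property for $G^*[S]$ then follows by taking any covering edge of $\{s_i,s_j\}$ in $G^*$ and using (b) to transport each of its vertices to its representative in $S$.

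I do not expect any serious obstacle; the only delicate point is the sublemma in part (1) ensuring that the whole class $[v]$, and not merely the chosen representative $v$, is nonadjacent to $u$, which is what lets the simultaneous symmetrization of $[v]$ preserve the edge-count inequality. Everything else is unpacking the definitions of $\sim$ and of a blowup.
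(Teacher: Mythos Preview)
Your proposal is correct and is precisely the routine verification the paper has in mind; the paper itself simply states that the proposition ``follows immediately from the algorithm'' without writing out details, and your two-part argument (the edge-count comparison for a single step, using that all of $[v]$ is nonadjacent to $u$, and the unpacking of the termination condition into the blowup structure) is exactly the natural way to fill them in.
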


Let $G$ be an $r$-graph on $[n]$. A {\it weight function}, or {\it weight assignment}, $f$ on $G$ is a mapping from $V(G)$ to $[0, \infty)$. We say that $f$ is a {\it $1$-sum
weight assignment} if $\sum_{v\in V(G)} f(v)=1$. For every edge $e$ in $G$,
define $f(e)=\prod_{v\in e} f(v)$ and call it the {\it weight} of $e$.
We define a polynomial in the variables $\tx=(x_1,\ldots,
x_n)$ by 
$$p_G(\tx)=r!\cdot \sum_{e\in E(G)} \prod_{i\in e} x_i.$$
We define the {\it lagrangian} of $G$ to be 
$$\lambda(G)=\max\left\{ p_G(x): \forall i=1,\ldots, n, x_i\geq 0, \sum_{i=1}^n x_i=1\right\}.$$
Given an $r$-graph $F$, we define the {\it lagrangian density} $\pi_\lambda(F)$ of $F$ to be
$$\pi_\lambda(F)=\max \{\lambda(G): F\not\subseteq G\}.$$
Note that our definition of the lagrangian follows that of Sidorenko \cite{sidorenko} and
differs from the definition given by Keevash \cite{keevash} by a factor of $r!$.
The following proposition follows immediately from the definition of $\pi_\lambda(F)$.
\begin{proposition} \label{blowup}
Let $F$ be an $r$-graph. Let $L$ be an $F$-free $r$-graph. Let $G$ be an $r$-graph
on $[n]$ that is a blowup of $L$. Then $|G|\leq \pi_\lambda(F)\frac{n^r}{r!}$.
\end{proposition}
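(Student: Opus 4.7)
The plan is to unpack the blowup structure of $G$ into a weight assignment on $L$ and then apply the definitions of lagrangian and lagrangian density directly. Let $V_1,\ldots,V_m$ be the parts of $G$ giving the blowup of $L$, so that $V(L)=[m]$ and $E(G)=\bigcup_{e\in E(L)}\prod_{i\in e}V_i$. Since the parts are disjoint and the products over distinct $e\in E(L)$ are disjoint (each edge in $G$ corresponds to a unique edge in $L$ via the partition), we have
$$|E(G)|=\sum_{e\in E(L)}\prod_{i\in e}|V_i|.$$

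Next I would define the weight assignment $f$ on $V(L)$ by $f(i)=|V_i|/n$ for each $i\in[m]$. Since $\sum_{i=1}^m |V_i|=n$, this is a $1$-sum weight assignment, i.e.\ $f(i)\geq 0$ and $\sum f(i)=1$. Evaluating the polynomial $p_L$ at this point gives
$$p_L(f)=r!\sum_{e\in E(L)}\prod_{i\in e}\frac{|V_i|}{n}=\frac{r!}{n^r}\sum_{e\in E(L)}\prod_{i\in e}|V_i|=\frac{r!\cdot |E(G)|}{n^r}.$$

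By the definition of the lagrangian, $p_L(f)\leq \lambda(L)$. Since $L$ is $F$-free, the definition of $\pi_\lambda(F)=\max\{\lambda(G'):F\not\subseteq G'\}$ gives $\lambda(L)\leq \pi_\lambda(F)$. Combining these two inequalities with the identity above yields
$$\frac{r!\cdot |E(G)|}{n^r}\leq \pi_\lambda(F),$$
which rearranges to $|E(G)|\leq \pi_\lambda(F)\cdot n^r/r!$, as required.

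There is no serious obstacle here, since the proposition is essentially a translation between the combinatorial edge count of a blowup and the algebraic evaluation of $p_L$ at the normalized part-size vector; the only subtlety worth double-checking is that the edges of $G$ are enumerated without repetition as $e$ ranges over $E(L)$, which holds because the parts $V_i$ are pairwise disjoint.
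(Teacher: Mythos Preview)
Your proof is correct and follows essentially the same approach as the paper's own proof: define the normalized part-size vector as a $1$-sum weight assignment on $L$, rewrite $|G|$ as $\frac{n^r}{r!}p_L(\tx)$, and then bound $p_L(\tx)\leq\lambda(L)\leq\pi_\lambda(F)$. The only difference is that you spell out the intermediate inequality $p_L(f)\leq\lambda(L)$ explicitly, whereas the paper jumps straight to $\pi_\lambda(F)$.
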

\begin{proof}
Suppose $V(L)=[s]$ and let $V_1,\ldots, V_s$ be the  partition of $V(G)$
with $V_i$ corresponding to $i$. For each $i\in [s]$,
let $x_i=|V_i|/n$. Let $\tx=(x_1,\ldots, x_n)$. Then $\forall i\in [s], x_i\geq 0$ and $\sum_{i=1}^s x_i=1$.
Since $G$ is a blowup of $L$, we have 
$$|G|=\sum_{e\in L} \prod_{i\in e} |V_i|=n^r \sum_{e\in L} \prod_{i\in e} x_i=\frac{n^r}{r!} \cdot p_{L}(\tx)\leq \frac{n^r}{r!} \pi_\lambda(F),$$
where the last inequality follows from the definition of $\pi_\lambda(F)$ and the fact
that $L$ is $F$-free. 
\end{proof}

We also mention a quick observation given in \cite{keevash}.

\begin{proposition}{\rm \cite{keevash}} \label{density-equivalence}
If $F$ is an $r$-graph that covers pairs, then $\pi(F)=\pi_\lambda(F)$.
\end{proposition}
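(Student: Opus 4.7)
The plan is to prove the two inequalities $\pi(F)\leq \pi_\lambda(F)$ and $\pi(F)\geq \pi_\lambda(F)$ separately, with the hypothesis that $F$ covers pairs entering only in the second (and more delicate) direction.

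For the upper bound I would take any $F$-free $r$-graph $G$ on $n$ vertices and evaluate $p_G$ at the uniform weight $\tx=(1/n,\ldots,1/n)$, which gives $\lambda(G)\geq p_G(\tx)= r!\,e(G)/n^r$. Since $G$ is $F$-free, the definition of $\pi_\lambda(F)$ yields $\lambda(G)\leq \pi_\lambda(F)$, and hence $e(G)\leq \pi_\lambda(F)\cdot n^r/r!$. Taking $G$ extremal gives $ex(n,F)\leq \pi_\lambda(F)\cdot n^r/r!$; dividing by $\binom{n}{r}$ and letting $n\to \infty$ produces $\pi(F)\leq \pi_\lambda(F)$. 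This direction works without any assumption on $F$.

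For the lower bound I would fix any $F$-free $r$-graph $L$ on $[s]$ and an optimal weight $\tx=(x_1,\ldots,x_s)$ with $\sum x_i=1$ and $p_L(\tx)=\lambda(L)$. I would then construct $G$ on $[n]$ as the blowup of $L$ with part sizes $|V_i|$ equal to $\fl{x_i n}$ or $\ce{x_i n}$. A direct count gives $e(G)=\sum_{e\in L}\prod_{i\in e}|V_i|=\frac{n^r}{r!}\,p_L(\tx)-O(n^{r-1})$. The decisive claim is that $G$ is still $F$-free, and this is exactly where the covers-pairs hypothesis is used: if $F$ embedded into $G$, then for any two vertices $u,v\in V(F)$ the hypothesis produces an edge of $F\subseteq G$ containing $\{u,v\}$; since every edge of the blowup meets each $V_i$ in at most one vertex, $u$ and $v$ must lie in distinct parts. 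Hence the part-map $\phi\colon V(F)\to [s]$ assigning each vertex to the index of its part is injective, and since it sends edges of $F$ to edges of $L$, it yields a copy of $F$ in $L$, contradicting $F$-freeness of $L$. Thus $ex(n,F)\geq e(G)$, so $\pi(F)\geq \lambda(L)-o(1)$, and taking the supremum over all $F$-free $L$ yields $\pi(F)\geq \pi_\lambda(F)$.

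The injectivity argument is the only subtle step; it is precisely where the hypothesis is consumed, since in general a blowup of an $F$-free hypergraph need not remain $F$-free, and without ``covers pairs'' the two densities can genuinely differ.
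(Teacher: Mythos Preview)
Your argument is correct and is the standard proof of this fact. Note that the paper does not actually supply a proof of this proposition: it is stated with a citation to Keevash's survey \cite{keevash} as ``a quick observation,'' so there is no in-paper proof to compare against. Your two-inequality approach---uniform weights for $\pi(F)\leq\pi_\lambda(F)$, and blowups of an $F$-free $L$ together with the injectivity-of-the-part-map argument for $\pi(F)\geq\pi_\lambda(F)$---is exactly the intended reasoning, and you have correctly identified that the ``covers pairs'' hypothesis is used precisely (and only) to force the part map $\phi$ to be injective.
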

The notion of hypergraph lagrangians immediately yields the following tight bounds
on $ex(n,\cH_{m+1}^F)$ for certain $r$-graphs $F$. We describe the bounds in
the following theorem, which  is a more specific version of Theorem 3.1 of \cite{keevash}. We give a proof using our language.

\begin{theorem}{\rm \cite{keevash}} \label{asymptotics}
Let $F$ be an $r$-graph with $n(F)\leq m+1$. Suppose that
$\pi_\lambda(F) \leq \frac{[m]_r}{m^r}$. Then for every $n$ we have
$ex(n,\cH_{m+1}^F)\leq \frac{[m]_r}{m^r}\cdot \frac{n^r}{r!}$. Equality holds if $r$ divides $n$. In particular,
$\pi(\cH_{m+1}^F)=\frac{[m]_r}{m^r}$.
\end{theorem}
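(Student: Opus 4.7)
The plan is to combine symmetrization, which kills non-adjacent non-equivalent pairs, with the Lagrangian hypothesis on $F$, via a case split on whether the resulting pair-covering ``skeleton'' contains $F$. Let $G$ be any $\cH_{m+1}^F$-free $r$-graph on $[n]$. First I would apply the symmetrization algorithm of Section 5 to obtain an $r$-graph $G^*$ with $e(G^*)\ge e(G)$, which remains $\cH_{m+1}^F$-free by iterating Proposition~\ref{H-free}. Let $S=\{u_1,\dots,u_s\}$ be a system of representatives of the equivalence classes of $G^*$ under $\sim$, and set $L=G^*[S]$. By Proposition~\ref{symmetrization-properties}, $L$ covers pairs and $G^*$ is the blowup of $L$ along parts $V_1,\dots,V_s$, so the weights $x_i=|V_i|/n$ satisfy $\sum_i x_i=1$.

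Next I would split on whether $L$ contains a copy of $F$. If $F\not\subseteq L$, Proposition~\ref{blowup} together with the hypothesis $\pi_\lambda(F)\le [m]_r/m^r$ gives
$$e(G)\le e(G^*)\le \pi_\lambda(F)\cdot\frac{n^r}{r!}\le\frac{[m]_r}{m^r}\cdot\frac{n^r}{r!}.$$
If instead $F\subseteq L$, I would argue that $s\le m$: otherwise, since $n(F)\le m+1\le s$, I could extend $V(F)\subseteq S$ to a set $C\subseteq S$ with $|C|=m+1$; every pair in $C$ lies in $S$ and is therefore covered in $G^*[S]\subseteq G^*$, while $G^*[C]$ still contains the copy of $F$, so $C$ realizes the core of some member of $\cH_{m+1}^F$ sitting inside $G^*$, contradicting its $\cH_{m+1}^F$-freeness. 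With $s\le m$ in hand, the inclusion $L\subseteq K_s^{(r)}$ and Maclaurin's inequality $e_r(x_1,\dots,x_s)\le\binom{s}{r}/s^r$ (for $\sum x_i=1$, $x_i\ge 0$) yield
$$e(G^*)=n^r\sum_{e\in L}\prod_{i\in e}x_i\le \frac{[s]_r}{s^r}\cdot\frac{n^r}{r!}\le \frac{[m]_r}{m^r}\cdot\frac{n^r}{r!},$$
using that $[s]_r/s^r=\prod_{i=0}^{r-1}(1-i/s)$ is nondecreasing in $s$.

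For the matching lower bound, when $m\mid n$ the balanced Tur\'an graph $T_r(n,m)$ has exactly $\binom{m}{r}(n/m)^r=\frac{[m]_r}{m^r}\cdot\frac{n^r}{r!}$ edges, and is $\cH_{m+1}^F$-free because any $m+1$ of its vertices contain two vertices in the same part, which form an uncovered pair. Dividing by $\binom{n}{r}$ and letting $n\to\infty$ along multiples of $m$ gives $\pi(\cH_{m+1}^F)=[m]_r/m^r$. The conceptual heart of the argument is the extraction of the pair-covering skeleton $L$ from $G^*$: once that is in hand, the two hypotheses $n(F)\le m+1$ and $\pi_\lambda(F)\le[m]_r/m^r$ line up perfectly with the dichotomy ``$F\not\subseteq L$ vs.\ $F\subseteq L$'' to deliver the Tur\'an bound in both cases, with the inequality $[s]_r/s^r\le[m]_r/m^r$ for $s\le m$ absorbing the ``missing'' edges of $L$ relative to $K_m^{(r)}$.
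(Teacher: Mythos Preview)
Your argument is correct and follows the same route as the paper's: symmetrize to $G^*$, pass to the pair-covering skeleton $L=G^*[S]$, and invoke the Lagrangian hypothesis on $F$ via Proposition~\ref{blowup}. You are in fact a bit more careful than the paper in one place: the paper simply asserts that $F\subseteq G^*[S]$ would force a member of $\cH_{m+1}^F$ inside $G^*[S]$, which tacitly requires $|S|\ge m+1$; your explicit case split, together with the Maclaurin bound and the monotonicity of $[s]_r/s^r$ in $s$, cleanly disposes of the residual case $s\le m$. (Your use of $m\mid n$ for the equality case is also the correct reading; the ``$r$ divides $n$'' in the statement is evidently a typo, since $|T_r(n,m)|=\binom{m}{r}(n/m)^r$ exactly when $m\mid n$.)
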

\begin{proof}
If $L$ is a member of $\cH_{m+1}^F$ with core $C$, then $\partial_2(L)$ contains
an $(m+1)$-clique since every pair in $C$ is covered in $L$.
 Since $\partial_2( T^r_m(n))$ does not contain an $(m+1)$-clique, $L\not\subseteq T^r_m(n)$. Hence $T^r_m(n)$ is $\cH_{m+1}^F$-free and $ex(n,\cH_{m+1}^F)\geq e(T^r_m(n))$.
Since $\lim_{n\to \infty} |T^r_m(n)|/\binom{n}{r}=\frac{[m]_r}{m^r}$, we have
$\pi(\cH_{m+1}^F)\geq \frac{[m]_r}{m^r}$.

Next, let $G$ be an $\cH_{m+1}^F$-free $r$-graph on $[n]$. Let $G^*$ be the final graph obtained at the end of the symmetrization process applied to $G$.
By Proposition \ref{H-free} and Proposition \ref{symmetrization-properties}, 
$G^*$ is $\cH_{m+1}^F$-free and $e(G^*)\geq e(G)$.
Let $S$ consist of one vertex from each equivalence class of $G^*$.
By Proposition \ref{symmetrization-properties}, $G^*[S]$ covers pairs and
$G^*$ is a blowup of $G^*[S]$. If $F\subseteq G^*[S]$, then since $G^*[S]$
covers pairs, $G^*$ (in fact, $G^*[S]$) contains a member of $\cH_{m+1}^F$, a contradiction. Hence $F\not\subseteq G^*[S]$. 

By Lemma \ref{blowup}, we have
$$|G|\leq |G^*|\leq \pi_\lambda(F)\frac{n^r}{r!}\leq 
\frac{[m]_r}{m^r}\cdot \frac{n^r}{r!}.$$
Since this holds for every $\cH_{m+1}^F$-free $G$ on $[n]$, we have
$ex(n,\cH_{m+1}^F)\leq \frac{[m]_r}{m^r}\cdot \frac{n^r}{r!}$.
Note that when $r$ divides $n$, $|T^r_m(n)|=\frac{[m]_r}{m^r}\cdot \frac{n^r}{r!}$.
Hence $ex(n,\cH_{m+1}^F)= \frac{[m]_r}{m^r}\cdot \frac{n^r}{r!}$ in this case.
Finally, a straightforward calculation shows that  $\pi(\cH_{m+1}^F)=\lim_{n\to\infty} ex(n,\cH_{m+1}^F)/\binom{n}{r} \leq \frac{[m]_r}{r!}$. Hence $\pi(\cH_{m+1}^F)=\frac{[m]_r}{m^r}$.
\end{proof}
Let us mention that even though Theorem \ref{asymptotics} immediately establishes the exact value of $ex(n,\cH^F_{m+1})$, establishing the possible stability for $\cH^F_{m+1}$ and establishing the exact value of $ex(n,H^F_{m+1})$ are much more difficult. In fact, the latter two  are the focus of this paper.
%%%%%%%%%%%%%%%%%%%%%%%%%%%%%%%%%%%%%%%

\section{Main results}
Our main results involve the determination of the exact value of $ex(n,H^F_{m+1})$ for certain $r$-graphs $F$ for sufficiently large $n$, together with stability of near extremal $H^F_{m+1}$-free graphs. 

\begin{definition}\label{stable}
{\rm
Let $m,r\geq 2$ be positive integers. Let $F$ be an $r$-graph on at most $m+1$ vertices with $\pi_\lambda(F)\leq \cmr$. We say that $\cH_{m+1}^F$ is {\it $m$-stable}
if for every real $\ve>0$ there are a real $\delta_1>0$ and an integer $n_1$ such that
if $G$ is an $\cH_{m+1}^F$-free $r$-graph with $n\geq n_1$ vertices and
more than $(\cmr-\delta_1)\binom{n}{r}$ edges, then $G$ can be made $m$-partite
by deleting at most $\ve n$ vertices.}
\end{definition}

\begin{theorem} \label{stability} {\bf (Stability)}
Let $m,r$ be positive integers. Let $F$ be an $r$-graph that either has at most $m$ vertices or has $m+1$ vertices one of which has degree $1$. If $\pi_\lambda(F)<\frac{[m]_r}{m^r}$, then $\cH_{m+1}^F$ is $m$-stable.
\end{theorem}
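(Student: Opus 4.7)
The plan is to prove this by contradiction, combining symmetrization, the Lagrangian upper bound from Theorem~\ref{asymptotics}, and a structural transfer back to the original graph. Suppose the conclusion fails: there is $\ve>0$ and a sequence of $\cH_{m+1}^F$-free $r$-graphs $G_n$ on $n$ vertices with $e(G_n) > (\cmr - \delta_n)\binom{n}{r}$ for some $\delta_n\to 0$, such that no deletion of at most $\ve n$ vertices makes $G_n$ $m$-partite. A standard cleaning step (iteratively remove vertices of degree substantially below $\cmr \binom{n-1}{r-1}$) produces a subgraph $G'_n$ on $n'=(1-o(1))n$ vertices with $\delta(G'_n)\geq(\cmr-o(1))\binom{n'-1}{r-1}$ and $e(G'_n)\geq(\cmr-o(1))\binom{n'}{r}$; as only $o(n)$ vertices are discarded, it suffices to prove the stability conclusion for $G'_n$.

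Apply the symmetrization algorithm to $G'_n$, producing $G^*_n$. By Propositions~\ref{H-free} and~\ref{symmetrization-properties}, $G^*_n$ is $\cH_{m+1}^F$-free, $e(G^*_n)\geq e(G'_n)$, and $G^*_n$ is a blowup of $G^*_n[S_n]$ (where $S_n$ picks one representative per equivalence class) with $G^*_n[S_n]$ covering pairs. I distinguish cases on $|S_n|$. If $|S_n|\geq m+1$, then since $G^*_n[S_n]$ covers pairs and $n(F)\leq m+1$, any embedding of $F$ into $G^*_n[S_n]$ extends (by adding further representatives from $S_n$) to an $(m+1)$-vertex core for a copy of $\cH_{m+1}^F$, contradicting $\cH_{m+1}^F$-freeness; hence $F\not\subseteq G^*_n[S_n]$ and $\lambda(G^*_n[S_n])\leq\pi_\lambda(F)<\cmr$. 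If $|S_n|\leq m-1$, then $G^*_n[S_n]\subseteq K_{m-1}^{(r)}$ forces $\lambda(G^*_n[S_n])\leq\frac{[m-1]_r}{(m-1)^r}<\cmr$. If $|S_n|=m$ but $G^*_n[S_n]\neq K_m^{(r)}$, then, since among the finitely many $r$-graphs on $m$ vertices only $K_m^{(r)}$ attains Lagrangian $\cmr$, we get $\lambda(G^*_n[S_n])\leq \cmr-\eta_0$ for a constant $\eta_0>0$. In each such case, Proposition~\ref{blowup} yields $e(G^*_n)\leq(\cmr-\eta)\frac{(n')^r}{r!}$ for a uniform $\eta>0$, contradicting $e(G^*_n)\geq(\cmr-o(1))\binom{n'}{r}$ for $n$ large. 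Therefore $G^*_n$ is the complete $m$-partite $r$-graph on parts $V_1,\dots,V_m$, and the near-extremality of $e(G^*_n)$ forces $|V_i|=n'/m+o(n')$.

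The transfer back to $G'_n$ is the main obstacle. The plan is to show that the partition $V_1,\dots,V_m$ inherited from $G^*_n$ also witnesses near-$m$-partiteness for $G'_n$: the set of vertices $v\in V_i$ whose link $\cL_{G'_n}(v)$ contains at least $\eta_1(n')^{r-1}$ \emph{non-transversal} $(r-1)$-sets---those having two vertices inside a common part $V_j$---has size at most $\ve n'/2$ for an appropriate $\eta_1>0$. Otherwise, using the high minimum degree of $G'_n$ together with the near-completeness of its transversal edges, one can greedily pick two such bad vertices $w_1,w_2\in V_j$ certified by $v$, and one vertex $w_k\in V_k$ for each $k\in[m]\setminus\{i,j\}$, so that every pair in $C=\{v,w_1,w_2,w_3,\dots,w_m\}$ is covered by some edge of $G'_n$. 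The hypothesis on $F$---having at most $m$ vertices, or $m+1$ vertices with one of degree $1$---is used precisely here: it guarantees that $F$ can be embedded in $G'_n[C]$ (for $n(F)=m+1$, place the degree-$1$ vertex at $w_1$ so that its unique edge maps to a transversal $r$-set avoiding $w_2$; for $n(F)\leq m$, place $V(F)$ in $n(F)$ distinct parts of $C$). Selecting $C$ to avoid the small number of missing transversal edges ensures all edges of $F$ lie in $G'_n[C]$, producing a copy of $\cH_{m+1}^F$ in $G'_n$, a contradiction. Deleting the at most $\ve n'/2$ bad vertices together with the $o(n)$ cleaning-step vertices then yields an $m$-partite graph, contradicting the initial assumption and completing the proof.
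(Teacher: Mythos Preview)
Your first two paragraphs are essentially fine and match the paper's high-level strategy: symmetrize, use the strict inequality $\pi_\lambda(F)<\cmr$ to rule out $|S_n|\geq m+1$, and use a finite Lagrangian gap to rule out $|S_n|\leq m$ with $G^*_n[S_n]\neq K_m^{(r)}$, concluding that $G^*_n$ is a near-balanced complete $m$-partite $r$-graph. The problem is the third paragraph.

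The assertion that $G'_n$ has ``near-complete transversal edges'' with respect to the partition $V_1,\dots,V_m$ is not justified, and in general it is false. The partition $V_1,\dots,V_m$ is produced by the equivalence classes of $G^*_n$, which arose after a long sequence of symmetrizations; each such step overwrites the link of an entire class with that of another vertex. A vertex $v$ that was symmetrized to $u$ early on ends up in whatever part $u$ ends up in, but its \emph{original} link in $G'_n$ bears no relation to that part. So you have no control over how many edges of $G'_n$ are transversal versus non-transversal for this particular partition, and hence no way to carry out the greedy core-building step you describe (which needs many transversal edges of $G'_n$ to actually be present). The edge-count inequality $e(G^*_n)-e(G'_n)=o((n')^r)$ gives nothing here, since it does not compare $G'_n$ and $G^*_n$ edge-by-edge.

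This is exactly why the paper does not use plain symmetrization (Algorithm~5.2) but the interleaved symmetrize-and-clean procedure (Algorithm~\ref{symm-clean}) together with a \emph{reverse induction} on the steps of the algorithm (Theorem~\ref{stability-general}). Cleaning guarantees that every intermediate graph $H_i$ has minimum degree at least $(\cmr-\gamma)\binom{n(H_i)-1}{r-1}$, so that at each backward step one is comparing two graphs that differ only by the symmetrization of a single equivalence class, both of which are near-extremal $m$-partite on the common set $W$. This is what makes the transversal-completeness bounds (Lemma~\ref{near-complete}) available at every stage and allows one to argue, class by class, which part the just-unsymmetrized class should be reassigned to (Claims~1 and~2). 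Your proposal collapses all of these steps into one jump from $G^*_n$ back to $G'_n$, and that jump does not go through.
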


\begin{theorem} \label{stable-to-exact} {\bf(Stability to Exactness)}
 Let $F$ be an $r$-graph that either has at most $m$ vertices or has $m+1$ vertices one of which has degree $1$. If $\cH_{m+1}^F$ is $m$-stable, then there exists an integer $n_2$ such that for all $n\geq n_2$, $ex(n,H^F_{m+1})=|\Tr (n,m)|$.
\end{theorem}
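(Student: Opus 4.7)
The plan is to bootstrap the $m$-stability hypothesis into an exact Tur\'an result via a cleaning-and-embedding argument. The lower bound $ex(n,H^F_{m+1}) \geq |\Tr(n,m)|$ is immediate: every member of $\cH_{m+1}^F$ has a core of $m+1$ pairwise-covered vertices, whose $2$-shadow contains $K_{m+1}$, while $\partial_2(\Tr(n,m))$ is $K_{m+1}$-free, so $\Tr(n,m)$ is even $\cH_{m+1}^F$-free and hence $H^F_{m+1}$-free.

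For the upper bound, let $G$ be $H^F_{m+1}$-free on $n$ vertices with $e(G) \geq |\Tr(n,m)|$; the aim is to force $G \cong \Tr(n,m)$. Since $|\Tr(n,m)| = \cmr \cdot \frac{n^r}{r!}(1-O(1/n))$, for $n$ large the edge density of $G$ exceeds the $m$-stability threshold $(\cmr - \delta_1)\binom{n}{r}$. The first task is to apply stability despite $G$ being only $H^F_{m+1}$-free rather than $\cH_{m+1}^F$-free. I would carry this out via a cleaning step: a supersaturation-style count using the general bound $ex(n,\cH_{m+1}^F) \leq \cmr \cdot \frac{n^r}{r!}$ shows $G$ contains only a sub-extremal number of copies of members of $\cH_{m+1}^F$, so removing an $o(n)$-sized vertex set $U$ destroys all such copies, leaving $G-U$ which is $\cH_{m+1}^F$-free and still above the stability threshold. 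The $m$-stability hypothesis then produces an $m$-partition of most of $V(G-U)$; reassigning the removed vertices and the stability slack to the class maximizing their transversal degree yields an $m$-partition $V_1 \cup \cdots \cup V_m$ of all of $V(G)$ with $o(n^r)$ intra-part edges.

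The second task is to eliminate intra-part edges, which forces $G$ to be $m$-partite on $V_1 \cup \cdots \cup V_m$, hence $G \subseteq \Tr(n,m)$, and combined with $e(G) \geq |\Tr(n,m)|$ gives $G \cong \Tr(n,m)$. Suppose for contradiction that some edge has two vertices $x, y \in V_1$. I would construct a copy of $H^F_{m+1}$ containing $x$ and $y$, contradicting $H^F_{m+1}$-freeness. The candidate core is $C = \{x, y, u_2, \ldots, u_m\}$ with a high-transversal-degree representative $u_i \in V_i$ for each $i=2,\ldots,m$. The hypothesis on $F$ is what makes the $F$-embedding feasible: if $n(F) \leq m$, embed $F$ into $\{y, u_2, \ldots, u_m\}$, leaving $x$ as the ``extra'' core vertex; if $n(F)=m+1$ with a degree-$1$ vertex $v_1$, place $v_1$ at $x$, so its unique $F$-incident edge becomes a transversal $r$-set containing $x$ routed through $y$ and some $u_i$'s. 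Because $G$ has only $o(n^r)$ intra-part edges, the transversal structure of $G$ is nearly complete, and a counting argument shows the $u_i$'s can be chosen so that all required $F$-edges are realized in $G$. For each remaining pair in $C$ not covered by an $F$-edge, a greedy choice of a transversal $(r-2)$-set produces a covering $B_{i,j}$-edge in $G$, pairwise disjoint across pairs thanks to the high transversal degrees of the $u_i$'s. This exhibits the forbidden $H^F_{m+1}$. The main obstacle is exactly this embedding step: coordinating the choice of the $u_i$'s so that all $F$-edges inside the core are realized while the auxiliary $B_{i,j}$-sets can simultaneously be picked pairwise disjoint and disjoint from $C$, which is precisely where the structural condition on $F$ and the smallness of the cleaning losses have to be leveraged.
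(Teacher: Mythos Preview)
Your overall strategy---pass to a $\cH_{m+1}^F$-free subgraph, apply $m$-stability to get a near-complete $m$-partite structure, then argue that any intra-part (``bad'') edge yields a copy of $H^F_{m+1}$---matches the paper's. But the final embedding step, which you flag as ``the main obstacle,'' has a genuine gap, and it is exactly where the paper spends most of its effort.

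The problem is this: given a bad edge $e$ with $x,y\in e\cap V_1$, you want to choose $u_2,\ldots,u_m$ so that (i) the transversal $\{y,u_2,\ldots,u_m\}$ is complete in $G$ (to embed $F$), and (ii) every pair $\{x,u_i\}$, $\{y,u_i\}$ has high enough codegree in $G$ to greedily pick disjoint covering edges. Neither is guaranteed. The global bound $|M|=|K\setminus G|=o(n^r)$ says nothing about $d_M(x)$ or $d_M(y)$ for the \emph{specific} vertices $x,y$ handed to you by the bad edge. If $x$ happens to be a ``defect'' vertex with $d_M(x)=\Theta(n^{r-1})$, then the number of incomplete transversals through $x$ is $\Theta(n^{m-1})$, and the codegree $d_G(\{x,u_i\})$ can be tiny for every choice of $u_i$; your greedy covering then fails. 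High minimum degree of $G$ does not rescue this, since $d_G(x)$ can be large while $d_{K\cap G}(x)$ is small (the difference being bad edges through $x$).

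The paper's argument confronts this head-on. First, it picks not just any $m$-partition but one maximizing the functional $\phi=\sum_{e\in G}|\{i:e\cap V_i\neq\emptyset\}|$. It then proves (Claim 4) that every bad pair contains a defect vertex $w\in W=\{w:d_M(w)\geq c_1 n^{r-1}\}$, and by averaging finds a particular $w_0\in W$ that is the \emph{center} of at least $c_3 n^{r-1}$ bad edges. The point is that these many bad edges through $w_0$ themselves supply the sunflowers needed to cover pairs $\{w_0,a\}$---so the defect of $w_0$ is turned into an asset. Finally, the $\phi$-optimality is used (Claim 8) to show that for each remaining part $V_i$, a positive proportion $D_i$ of vertices have high codegree with $w_0$; otherwise moving $w_0$ to $V_i$ would increase $\phi$. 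Only with $w_0$, the bad-edge link structure $L''$, and the sets $D_i$ in hand does the paper build the forbidden $H^F_{m+1}$ (Claim 9). Your proposal skips all of this machinery, and without it the embedding step does not go through.

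A smaller point: your ``supersaturation-style'' vertex removal to pass from $H^F_{m+1}$-free to $\cH^F_{m+1}$-free is not justified as stated. The paper instead removes $O(n^{r-1})$ \emph{edges} (Lemma~\ref{removal}) via a sunflower argument on pairs, which is both simpler and sufficient for applying stability.
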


Theorem \ref{stability} and Theorem \ref{stable-to-exact} immediately imply

\begin{theorem} \label{proper} {\bf (Main Theorem)}
Let $m,r$ be positive integers.  Let $F$ be an $r$-graph that either has at most $m$ vertices or has $m+1$ vertices one of which has degree $1$. Suppose
either $\pi_\lambda(F)<\frac{[m]_r}{m^r}$ or $\pi_\lambda(F)=\frac{[m]_r}{m^r}$
and $\cH^F_{m+1}$ is $m$-stable.
Then there exists a positive integer $n_3$ such that for all $n\geq n_3$ we have $ex(n,H_{m+1}^F)=|\Tr(n,m)|$.
\end{theorem}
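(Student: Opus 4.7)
The plan is to derive Theorem~\ref{proper} as an immediate formal consequence of Theorems~\ref{stability} and~\ref{stable-to-exact}, via a short case split on the value of $\pi_\lambda(F)$. I expect no substantive work here: all the content sits in the two preparatory theorems, and the present statement is just their concatenation.

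First I would show that under either branch of the hypothesis, $\cH_{m+1}^F$ is $m$-stable in the sense of Definition~\ref{stable}. In the branch $\pi_\lambda(F) < [m]_r/m^r$, the structural condition on $F$ carried over from Theorem~\ref{proper}—namely that either $n(F) \leq m$, or $n(F) = m+1$ with one vertex of degree $1$—is exactly the hypothesis of Theorem~\ref{stability}, which then yields $m$-stability. In the branch $\pi_\lambda(F) = [m]_r/m^r$, the $m$-stability of $\cH^F_{m+1}$ is assumed outright. Note that in order to invoke Definition~\ref{stable} we also need $\pi_\lambda(F) \leq [m]_r/m^r$, which holds in both cases.

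With $m$-stability established, I would apply Theorem~\ref{stable-to-exact} to $F$. The same structural condition on $F$ is the hypothesis of that theorem, so it applies without modification and produces an integer $n_2$ such that $ex(n,H^F_{m+1}) = |T_r(n,m)|$ for every $n \geq n_2$. Setting $n_3 := n_2$ delivers the conclusion of Theorem~\ref{proper}.

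Since the derivation is purely formal, the only thing worth double-checking is that the structural assumption on $F$ is literally the same in all three statements (Theorems~\ref{stability}, \ref{stable-to-exact}, and \ref{proper}), so that no extra hypothesis creeps in when one chains the two implications. The genuine obstacles—obtaining the stability statement and then promoting stability to an exact Tur\'an number—are entirely deferred to the proofs of Theorems~\ref{stability} and \ref{stable-to-exact}, which I would address separately.
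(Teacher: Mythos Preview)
Your proposal is correct and matches the paper's approach exactly: the paper simply states that Theorem~\ref{proper} is an immediate consequence of Theorems~\ref{stability} and~\ref{stable-to-exact}, with no additional argument given. Your case split on whether $\pi_\lambda(F)$ is strictly less than or equal to $[m]_r/m^r$ is the natural way to spell this out, and the structural hypothesis on $F$ is indeed identical across all three statements.
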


Theorem \ref{stability} and Theorem \ref{proper} immediately imply stability and exact results on expanded cliques
(\cite{mubayi}, \cite{pikhurko}) and on generalized fans (\cite{MP}), since there
$\pi_\lambda (F)=0<\frac{[m]_r}{m^r}$.
By Proposition \ref{density-equivalence}, we have

\begin{corollary}
Let $m,r$ be positive integers.  Let $F$ be an $r$-graph that either has at most $m$ vertices or has $m+1$ vertices one of which has degree $1$. Suppose $F$ covers pairs. Suppose either $\pi(F)<\frac{[m]_r}{m^r}$ or $\pi(F)=\frac{[m]_r}{m^r}$
and $\cH^F_{m+1}$ is $m$-stable.
Then there exists a positive integer $n_3$ such that for all $n\geq n_3$ we have $ex(n,H_{m+1}^F)=|\Tr(n,m)|$.
\end{corollary}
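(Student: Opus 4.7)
The plan is to observe that this corollary is essentially a direct restatement of Theorem \ref{proper} under the additional hypothesis that $F$ covers pairs, via the bridge supplied by Proposition \ref{density-equivalence}. So the whole argument amounts to translating the Tur\'an density hypothesis into a Lagrangian density hypothesis and then quoting the main theorem.

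More explicitly, I would proceed as follows. First, I would note that the hypotheses on $F$ required by Theorem \ref{proper} (namely $n(F)\leq m$, or $n(F)=m+1$ with a vertex of degree $1$) are stated verbatim in the corollary, so these carry over with nothing to check. The only substantive difference between the corollary and Theorem \ref{proper} is that the dichotomy is phrased in terms of $\pi(F)$ instead of $\pi_\lambda(F)$. Here I would invoke Proposition \ref{density-equivalence}: since $F$ covers pairs by assumption, we have $\pi(F)=\pi_\lambda(F)$. Substituting, the assumption $\pi(F)<\frac{[m]_r}{m^r}$ becomes $\pi_\lambda(F)<\frac{[m]_r}{m^r}$, and likewise $\pi(F)=\frac{[m]_r}{m^r}$ becomes $\pi_\lambda(F)=\frac{[m]_r}{m^r}$, so the $m$-stability assumption in the equality case matches the hypothesis of Theorem \ref{proper} as well.

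Having established that every hypothesis of Theorem \ref{proper} holds, I would simply apply that theorem to obtain $n_3$ with $\ex(n,H^F_{m+1})=|\Tr(n,m)|$ for all $n\geq n_3$, which is the conclusion.

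There is no real obstacle here: the work has been entirely absorbed by Proposition \ref{density-equivalence} (which is cited from \cite{keevash}) and by Theorem \ref{proper}. The only reason to state the corollary separately is that for an $F$ that covers pairs the Tur\'an density $\pi(F)$ is usually the more natural and better-studied quantity to have on hand, so this formulation is the one most likely to be directly applicable when checking the hypothesis against the existing literature.
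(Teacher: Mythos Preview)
Your proposal is correct and matches the paper's approach exactly: the corollary is stated immediately after the sentence ``By Proposition \ref{density-equivalence}, we have'', so the paper's own proof is precisely the one-line translation $\pi(F)=\pi_\lambda(F)$ followed by an appeal to Theorem \ref{proper}.
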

To introduce our next main theorem, we need a definition.
Given a $2$-graph $G$ and an integer $r\geq 2$,
the {\it $(r-2)$-fold enlargement} of $G$ is an $r$-graph $F$ obtained by 
taking an $(r-2)$-set $D$ that is vertex disjoint from $G$ and letting $F=\{e\cup D: e\in G\}$.

Define the following function 
$$f_r(x)=\frac{\prod_{i=1}^{r-1} (x+i-2)}{(x+r-3)^r}.$$
Note that $f_r(x)>0$ on $[0,\infty)$ and $\lim_{x\to \infty} f_r(x)=0$.
Let $M_r$ denote the last (i.e. rightmost) maximum of the function $f_r$ on the
interval $[2,\infty)$. As pointed out in \cite{sidorenko}, $M_r$ is non-decreasing in $r$,
and can be specifically calculated. For instance, $M_2=M_3=2$, $M_4=2+\sqrt{3}$.
Also, we will define $M_1=2$. The well-known Erd\H{o}s-S\'os conjecture says
that if $T$ is a $k$-vertex tree or forest then $ex(n,T)\leq n(k-2)/2$. 
The conjecture has been verified for many families of trees. The conjecture has also been verified when $k$ is  large \cite{AKSS}.
The following theorem was proved by Sidorenko \cite{sidorenko}.

\begin{theorem} \label{sid-main} {\rm \cite{sidorenko}}
Let $r,k\geq 2$ be integers where $k\geq M_r$.
Let $T$ be a tree on $k$ vertices that satisfies Erd\H{o}-S\'os conjecture.
Let $F$ be the $(r-2)$-fold enlargement of $T$. Then 
$$\pi(\cH_{k+r-2}^F)=\pi_\lambda(F)=
\frac{[k+r-3]_r}{(k+r-3)^r}=(k-2)f_r(k).$$ 
\end{theorem}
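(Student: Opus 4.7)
The plan is to set $m := k+r-3$ and establish $\pi_\lambda(F) = [m]_r/m^r$; with this in hand, the chain $\pi(\cH_{k+r-2}^F) = \pi_\lambda(F) = [m]_r/m^r = (k-2)f_r(k)$ follows from Theorem \ref{asymptotics} (since $n(F) = m+1$) together with the identity $[m]_r/m^r = (k-2)f_r(k)$, which is immediate from unpacking $f_r$. The lower bound $\pi_\lambda(F) \geq [m]_r/m^r$ is witnessed by $K_m^{(r)}$: it is $F$-free because $n(F) > m$, and its Lagrangian equals $[m]_r/m^r$, attained at the uniform distribution on $[m]$.

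For the upper bound, let $L$ be an $F$-free $r$-graph and $x$ an optimal weight for $\lambda(L)$. A standard Lagrangian argument (shifting weight between two non-adjacent, positively weighted vertices preserves $p_L$, so either they are adjacent or one of them can be zeroed out) lets us assume $L$ covers pairs on the support. Decomposing $p_L(x)$ by splitting each edge into a ``pair'' part and an $(r-2)$-``tail'' gives
\[
p_L(x) \;=\; 2(r-2)!\sum_{D\in\binom{V(L)}{r-2}} x_D\cdot e(\cL_L(D),x),\qquad x_D = \prod_{v\in D}x_v,\; e(H,x)=\sum_{\{u,v\}\in H} x_u x_v,
\]
and each link graph $\cL_L(D)$ must be $T$-free, since a copy of $T$ in $\cL_L(D)$ would combine with $D$ into a copy of $F$ in $L$.

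Now the Erd\H{o}s--S\'os hypothesis for $T$ enters. In the cleanest case, where $x$ is uniform on an $N$-vertex support, summing the bound $e(\cL_L(D)) \leq (N-r+2)(k-2)/2$ over the $\binom{N}{r-2}$ link graphs (each edge of $L$ appears in $\binom{r}{2}$ of them) gives
\[
\binom{r}{2}\,e(L) \;\leq\; \binom{N}{r-2}\cdot \tfrac{(N-r+2)(k-2)}{2},
\]
which rearranges to $p_L(x) \leq (k-2)\,[N]_{r-1}/N^r = (k-2)\,f_r(N-r+3)$. The hypothesis $k \geq M_r$ places $k$ at or beyond the rightmost local maximum of $f_r$, so $f_r$ is non-increasing on $[k,\infty)$, and therefore the maximum of $(k-2)f_r(N-r+3)$ over $N \geq m$ is attained at $N = m$, with value $(k-2)f_r(k) = [m]_r/m^r$; for $N \leq m$ the trivial bound $p_L(x) \leq [N]_r/N^r \leq [m]_r/m^r$ suffices. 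Extending to non-uniform $x$ proceeds via the dyadic-layering identity $e(H,x) = 2\int_0^\infty s\cdot e(H[\{v:x_v\geq s\}])\,ds$, which upgrades the unweighted Erd\H{o}s--S\'os to the weighted bound $e(\cL_L(D),x) \leq \tfrac12(k-2)\sum_{v\notin D} x_v^2$; plugging this back into the decomposition and optimizing symmetrically over the simplex reduces to the uniform case handled above, giving $\lambda(L) \leq [m]_r/m^r$.

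The main obstacle is this last reduction from arbitrary weights to the uniform case. Naively bounding each link via Motzkin--Straus with $\omega(\cL_L(D)) \leq k-1$ is tight at the extremal uniform weight on $K_m^{(r)}$ but loose elsewhere, whereas the weighted Erd\H{o}s--S\'os obtained through layering is precisely the right tool. Assembling the bounds across all $(r-2)$-sets $D$ and performing the symmetric-polynomial optimization crucially exploits the monotonicity of $f_r$ beyond $M_r$, so the assumption $k \geq M_r$ is not merely technical: it is what pins the optimum at $N = m$ and delivers the clean closed form $(k-2)f_r(k) = [m]_r/m^r$.
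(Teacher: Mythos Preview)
Your overall strategy---establish $\pi_\lambda(F)=[m]_r/m^r$ with $m=k+r-3$, then invoke Theorem~\ref{asymptotics}---matches the paper's framework, and your lower bound and uniform-weight calculation are correct. The gap is in the non-uniform step.

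The layering identity you state is false. For a single edge $\{u,v\}$ with $x_u=a\le x_v=b$,
\[
2\int_0^\infty s\cdot \mathbf{1}[a\ge s]\,\mathbf{1}[b\ge s]\,ds \;=\; 2\int_0^a s\,ds \;=\; a^2,
\]
not $ab$; summed over $H$, the right-hand side gives $\sum_{\{u,v\}\in H}\min(x_u,x_v)^2$, a \emph{lower} bound for $e(H,x)$, so it cannot produce the upper bound you need. Worse, the weighted Erd\H{o}s--S\'os inequality you extract, $e(H,x)\le\frac{k-2}{2}\sum_v x_v^2$, is simply false. Take $k=4$, $T=P_4$, and $H=K_{1,5}$ (a star, hence $P_4$-free and with $d(H)<2=k-2$); with center weight $1$ and each leaf weight $\tfrac12$ one has $e(H,x)=\tfrac52$ but $\frac{k-2}{2}\sum_v x_v^2=\tfrac94$. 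So the reduction to the uniform case does not go through, and the final symmetric optimization is never reached.

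The paper does not reprove this theorem but cites Sidorenko, whose key tool appears as Lemma~\ref{local}. The correct weighted inequality for $T$-free graphs (Lemma~\ref{max-1} and Corollary~\ref{local-max}) controls $e(H,x)$ via the \emph{maximum} weight $\beta=\max_v x_v$, giving $2e(H,x)\le d(H)\,\beta\sum_v x_v\le(k-2)\,\beta\sum_v x_v$; the dependence on $\beta$ is precisely why the bound in Lemma~\ref{local} reads $\lambda_\beta(L)\le(k-2)f_r(z)$ with $z=\max\{1/\beta-r+3,\,k\}$. Optimizing over $\beta$ and using the monotonicity of $f_r$ on $[M_r,\infty)$ then yields $\lambda(L)\le(k-2)f_r(k)$, which is the missing upper bound.
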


In fact, Sidorenko's arguments showed that  $ex(n,\cH_{r+k-2}^F)\leq \frac{[k+r-3]_r}{(k+r-3)^r} \frac{n^r}{r!}$,
where equality is attained if $r+k-3$ divides $n$. However, no
 stuctural stability of near extremal families was established and neither was the exact value of $ex(n,H_{r+k-2}^F)$ determined. 
Recall that $H_{k+r-2}^F$ is a specific member of the family $\cH_{k+r-2}^F$.
We strengthen Sidorenko's result by establishing structural stability
of near extremal $\cH_{k+r-2}^F$-free families and then using this stability to 
establish the exact value of $ex(n,H_{k+r-2}^F)$ for all sufficiently large $n$.  The $k=2$ case is trivial. We henceforth assume $k\geq 3$.

\begin{theorem} {\bf (Stability of enlarged trees)} \label{enlarged-tree-stable}
Let $k\geq 3, r\geq 2$ be integers, where $k\geq M_r$.
Let $T$ be a $k$-vertex tree that satisfies the Erd\H{o}s-S\'os conjecture.
Let $F$ be the $(r-2)$-fold enlargement of $T$. 
Then $\cH_{k+r-2}^F$ is $(k+r-3)$-stable.
\end{theorem}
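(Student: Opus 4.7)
Set $m=k+r-3$. Given $\ve>0$, the plan is to produce $\delta_1>0$ and $n_1$ as required by Definition~\ref{stable}. Let $G$ be any $\cH^F_{m+1}$-free $r$-graph on $n\geq n_1$ vertices with $e(G)\geq (\tfrac{[m]_r}{m^r}-\delta_1)\binom{n}{r}$. I would first apply the Algorithm to $G$, obtaining $G^*$ which, by Propositions~\ref{H-free} and~\ref{symmetrization-properties}, is $\cH^F_{m+1}$-free with $e(G^*)\geq e(G)$, is a blowup of its quotient $L=G^*[S]$ along the equivalence classes $V_1,\dots,V_s$, and $L$ covers pairs. Since $|V(F)|=m+1$ and $L$ covers pairs, any copy of $F$ in $L$ would realize a member of $\cH^F_{m+1}$ inside $L\subseteq G^*$; hence $L$ must be $F$-free. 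Setting $x_i=|V_i|/n$, we have $e(G^*)=(n^r/r!)\,p_L(\tx)$, and Theorem~\ref{sid-main} gives $p_L(\tx)\leq\lambda(L)\leq\pi_\lambda(F)=\tfrac{[m]_r}{m^r}$. Combined with the edge lower bound, this forces $p_L(\tx)\geq\tfrac{[m]_r}{m^r}-\delta_1'$, where $\delta_1'\to 0$ as $\delta_1\to 0$ and $n\to\infty$, reducing the problem to a near-extremal analysis of weighted Lagrangians of $F$-free hypergraphs.

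The hard part will be establishing a quantitative stability version of Sidorenko's Lagrangian bound: for every $\eta>0$ there exists $\delta^*>0$ such that any $F$-free $r$-graph $L'$ with $1$-sum weight assignment $\ty$ satisfying $p_{L'}(\ty)\geq\tfrac{[m]_r}{m^r}-\delta^*$ admits $M\subseteq V(L')$ with $|M|=m$, $L'[M]=K_m^{(r)}$, and $\sum_{j\notin M}y_j<\eta$. To prove this I would trace Sidorenko's proof of Theorem~\ref{sid-main} quantitatively. That argument bounds $\lambda(L')$ by applying Erd\H{o}s--S\'os to each $(r-2)$-fold link graph $\cL_{L'}(D)$ of $L'$ (which is $T$-free because $L'$ is $F$-free), yielding a bound of the form $(k-2)f_r(\cdot)$ evaluated on the effective support of $\ty$. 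Since $k\geq M_r$ makes $f_r$ strictly decrease past $m$, any significant weight spread over more than $m$ vertices reduces the bound by a quantifiable amount; moreover, among $r$-graphs on $m$ vertices, $K_m^{(r)}$ is the unique Lagrangian maximizer (strict concavity of the $r$-th elementary symmetric polynomial via AM--GM), so any missing edge on $M$ also costs. Either a direct quantitative computation or a compactness argument---assume failure, pass to a limit weighted $F$-free graph, and contradict Theorem~\ref{sid-main}---should convert these two slacks into the stated lemma.

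With the main lemma in hand, apply it to $L,\tx$ with $\eta=\ve/3$ to obtain $M$ of size $m$ with $L[M]=K_m^{(r)}$ and $R^*:=\bigcup_{i\notin M}V_i$ of size at most $(\ve/3)n$. Then $G^*$ restricted to $V(G)\setminus R^*$ is a blowup of $K_m^{(r)}$ on parts $\{V_i\}_{i\in M}$, hence $m$-partite. The remaining task is to transfer this structure back from $G^*$ to $G$, which is a standard but delicate cleanup. The symmetrization slack $e(G^*)-e(G)\leq O(\delta_1'n^r)$ bounds the aggregate discrepancy between $G$ and $G^*$; removing the $O(\sqrt{\delta_1'})n$ vertices whose $G$-link differs substantially from their $G^*$-link---equivalently, those incident to many edges of $G$ having two endpoints in a common $V_i$ with $i\in M$---yields an $m$-partite subgraph of $G$. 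Choosing $\delta_1$ sufficiently small and $n_1$ sufficiently large against $\ve$, the total number of deleted vertices is at most $\ve n$, verifying Definition~\ref{stable}.
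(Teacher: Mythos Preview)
Your Lagrangian stability lemma is essentially the paper's Lemma~\ref{lagrangian-stable}, and the inductive route you sketch (pass to the link, which is $F'$-free for the $(r-3)$-fold enlargement $F'$, and recurse) is exactly how the paper proves it; the base case $r=2$ is Lemma~\ref{tree-lemma}, which follows from a stability statement for graphs of bounded maximum average degree (Lemma~\ref{average-degree-stability}). So that part is on target, though the compactness alternative you float would need care since the host graph $L'$ has unbounded order.

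The genuine gap is your final paragraph. The assertion that ``the symmetrization slack $e(G^*)-e(G)\leq O(\delta_1' n^r)$ bounds the aggregate discrepancy between $G$ and $G^*$'' is false. Symmetrizing $v$ to $u$ when $d(u)=d(v)$ but $\cL_G(u)\cap\cL_G(v)=\emptyset$ leaves the edge count unchanged while replacing $v$'s entire link; iterating, one can have $|E(G)\triangle E(G^*)|=\Theta(n^r)$ with $e(G^*)-e(G)$ arbitrarily small. The partition $\{V_i\}_{i\in M}$ is defined in terms of $G^*$, so knowing that $G^*$ is $m$-partite on $V\setminus R^*$ gives you no control over how many edges of $G$ have two vertices in a common $V_i$. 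Your proposed deletion of vertices with atypical $G$-links cannot be justified from the edge-count slack alone.

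The paper's remedy is Theorem~\ref{stability-general}, and it is not a cleanup step but the engine of the proof. One runs Algorithm~\ref{symm-clean} (symmetrization \emph{with cleaning}) rather than plain symmetrization; cleaning keeps every intermediate $H_i$ at minimum degree $\geq(\tfrac{[m]_r}{m^r}-\gamma)\binom{n(H_i)-1}{r-1}$ while discarding at most $\gamma n$ vertices in total (Lemma~\ref{removed-vertices}). Taking $W$ to be the union of the $\leq m$ heavy equivalence classes of $G^*$ supplied by the Lagrangian lemma, one proves by \emph{reverse induction on $i$} that $H_i[W]$ is $m$-partite: at the step where $C_v$ was symmetrized to $u$, the high minimum degree makes $H_{i+1}[W]$ nearly the complete $m$-partite graph (Lemmas~\ref{min-degree} and~\ref{near-complete}), and then the $\cH^F_{m+1}$-freeness of $H_i$ is invoked directly (Claims~1 and~2 in the proof) to force the original edges through $v$ to respect the partition. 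This reverse-induction mechanism, not an edge-counting shortcut, is what transfers the $m$-partite structure from $G^*$ back to $G$.
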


Theorem \ref{enlarged-tree-stable} and Theorem \ref{stable-to-exact} immediately imply

\begin{theorem} {\bf (Exact result on enlarged trees)} \label{enlarged-trees}
Let $k\geq 3, r\geq 2$ be integers, where $k\geq M_r$.
Let $T$ be a $k$-vertex tree that satisfies the Erd\H{o}s-S\'os conjecture.
Let $F$ be the $(r-2)$-fold enlargement of $T$.  
There exists a positive integer $n_4$ such that for all $n\geq n_4$ we have
$ex(n,H_{r+k-2}^F)=|\Tr(n,r+k-3)|$.
\end{theorem}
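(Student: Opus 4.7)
The plan is to derive Theorem \ref{enlarged-trees} immediately from the two preceding ingredients: Theorem \ref{enlarged-tree-stable}, which asserts that $\cH_{k+r-2}^F$ is $(k+r-3)$-stable, and Theorem \ref{stable-to-exact}, which converts such stability into an exact Tur\'an result whenever a mild structural hypothesis on $F$ is met. Setting $m = k+r-3$, the only thing that requires verification is that $F$ satisfies the hypothesis of Theorem \ref{stable-to-exact}, namely that either $n(F) \le m$ or $n(F) = m+1$ with some vertex of $F$ of degree $1$.

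First I would count vertices. Since $F$ is the $(r-2)$-fold enlargement of the $k$-vertex tree $T$, its vertex set is $V(T) \cup D$ for a fixed $(r-2)$-set $D$ disjoint from $V(T)$, giving $n(F) = k + r - 2 = m + 1$. So the second alternative is the relevant one.

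Next I would exhibit a degree-$1$ vertex in $F$. Because every edge of $F$ has the form $e \cup D$ with $e \in E(T)$, the degree of a vertex $v \in V(T)$ inside $F$ equals its degree in $T$, whereas the vertices in $D$ lie in every edge and therefore have high degree. Since $k \ge 3$ and $T$ is a tree, $T$ contains at least two leaves, each of which becomes a degree-$1$ vertex of $F$. This verifies the structural hypothesis.

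With this in hand, Theorem \ref{stable-to-exact} (applied to our $F$ and $m = k+r-3$) supplies an integer $n_2$ such that for every $n \ge n_2$ one has $ex(n, H_{m+1}^F) = |\Tr(n,m)| = |\Tr(n, k+r-3)|$, so taking $n_4 = n_2$ finishes the proof. There is no real obstacle to this step itself; the only minor subtlety is the degree-$1$ check, which is immediate once one separates the roles of $V(T)$ and $D$ in $F$. All the genuine effort — the symmetrization/Lagrangian stability argument of Theorem \ref{enlarged-tree-stable} and the lifting from stability to exactness in Theorem \ref{stable-to-exact} — has already been carried out, so the proof of Theorem \ref{enlarged-trees} is pure bookkeeping.
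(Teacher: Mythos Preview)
Your proposal is correct and matches the paper's own derivation: the paper states that Theorem \ref{enlarged-trees} follows immediately from Theorem \ref{enlarged-tree-stable} together with Theorem \ref{stable-to-exact}. Your explicit verification that $n(F)=k+r-2=m+1$ and that any leaf of $T$ gives a degree-$1$ vertex of $F$ is exactly the small check needed to invoke Theorem \ref{stable-to-exact} with $m=k+r-3$.
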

When $T=K_{1,2}$ and $F$ is the $1$-enlargement of $T$, $H^F_4$ is the
$3$-uniform generalized triangle $T_3$. So Theorem \ref{enlarged-trees} immediately
yields $ex(n,T_3)=|\Tr(n,3)|=\fl{\frac{n}{3}}\cdot \fl{\frac{n+1}{3}}\cdot
\fl{\frac{n+2}{3}}$ for sufficiently large $n$, which was originally proved in \cite{FF-F5}.
To show that $\cH^F_{k+r-2}$ is $(k+r-3)$-stable, we first establish
stability of the lagrangian function for the tree $T$. The stability of the lagrangian of a tree itself maybe  of independent interest, since the larangian function of a $2$-graph $G$ is not always stable.

For the rest of the paper, we prove Theorems \ref{stability}, \ref{stable-to-exact},
and  \ref{enlarged-tree-stable}.

%%%%%%%%%%%%%%%%%%%%%%%%%%%%%%%%%%%%%%%

\section{Reduction from $H_{m+1}^F$-free graphs to $\cH_{m+1}^F$-free graphs}

In this short section,  
we establish a quick fact that every $H_{m+1}^F$-free $r$-graph on $[n]$
can be made $\cH_{m+1}^F$-free by removing $O(n^{r-1})$ edges. In particular,
this implies that to establish stability of near extremal $H_{m+1}^F$-free graphs 
it suffices to establish stabiliity of near extremal $\cH_{m+1}^F$-free graphs.
 
We  need the following result of Frankl on the Tur\'an number of a matching.
As is well-known, for sufficiently large $n$, the Tur\'an number $ex(n,M_{s+1})$ of an $r$-uniform matching $M_{s+1}$ of size $s+1$ is $\binom{n}{r}-\binom{n-s}{r}$, as was shown by Erd\H{o}s \cite{erdos-matching}. However, for
our purpose we will use the following slightly weaker but simpler bound that applies to all $n$.

\begin{lemma} {\rm \cite{frankl-matching}} \label{matching}
If $H$ is an $r$-graph on $[n]$ that contains no $(s+1)$-matching, then $|H|\leq s\binom{n}{k-1}$.
\end{lemma}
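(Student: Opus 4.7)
The bound in Lemma~\ref{matching} is the classical Frankl matching estimate, and the plan for its proof is as follows. First take a maximum matching $M = \{e_1, \ldots, e_t\}$ of $H$, so that $t = \nu(H) \leq s$. By the maximality of $M$, every edge of $H$ intersects the vertex set $W = V(M)$ of size $rt \leq rs$, since a disjoint edge could otherwise be added to $M$. A crude application of this observation gives $|H| \leq |W| \cdot \binom{n-1}{r-1} \leq rs\binom{n-1}{r-1}$, which is weaker than the claimed bound by roughly a factor of $r$; the work goes into shaving this factor.

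To get the improvement, I would partition $H$ into $t$ classes according to which matching edge each $f \in H$ \emph{first} meets: $H_i = \{f \in H : f \cap e_i \neq \emptyset \text{ and } f \cap e_j = \emptyset \text{ for all } j<i\}$. The target is to construct, for each $i$, an injection $\psi_i \colon H_i \hookrightarrow \binom{V(H)}{r-1}$, which would immediately yield $|H_i| \leq \binom{n}{r-1}$ and hence $|H| \leq t\binom{n}{r-1} \leq s\binom{n}{r-1}$. The natural candidate is $\psi_i(f) = f \setminus \{u(f)\}$ for a canonically chosen vertex $u(f) \in f \cap e_i$. A naive rule for $u(f)$ (such as ``smallest element'') can fail injectivity whenever two edges of $H_i$ differ only in one vertex of $e_i$. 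Following Frankl, I would remedy this by first reducing to the shifted case: apply the left-compressions $S_{ij}$ for $i<j$ iteratively, each of which preserves $|H|$ and does not increase $\nu(H)$, until $H$ becomes stable under all such operations. In the shifted regime, the compressed structure rules out the bad collisions, because two distinct edges colliding in the image of $\psi_i$ would produce enough shifted translates of edges in $H$ to furnish an $(s+1)$-matching, contradicting $\nu(H) \leq s$.

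The main obstacle is precisely this last point: verifying injectivity of $\psi_i$ (or, equivalently, the structural consequences of shifting plus bounded matching number) is the technical heart of Frankl's original argument and requires a careful inductive/extremal analysis of shifted hypergraphs. Once this is in place, summing over $i$ produces the stated bound. Note that the resulting estimate is strictly weaker than the sharp Erd\H{o}s bound $\binom{n}{r} - \binom{n-s}{r}$, but has the virtue of being valid for \emph{all} $n$ without any case analysis based on the size of $n$ relative to $s$, which is exactly what is needed in the subsequent applications in this paper.
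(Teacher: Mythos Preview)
The paper does not prove this lemma; it is quoted from Frankl and used as a black box. The only additional content the paper supplies is the remark immediately after the statement that Frankl in fact establishes the sharper inequality $|H|\le s\,|\partial_{r-1}(H)|$, from which the lemma follows since $|\partial_{r-1}(H)|\le\binom{n}{r-1}$.

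Your strategy has a genuine gap: the intermediate target $|H_i|\le\binom{n}{r-1}$ is simply false in general, so no injection $\psi_i\colon H_i\hookrightarrow\binom{[n]}{r-1}$ can exist, regardless of how $u(f)$ is chosen or whether $H$ has been shifted. Take $r=3$, $n=6$, and $H=\binom{[6]}{3}$, which is shifted with $\nu(H)=2$. For the matching $e_1=\{1,2,3\}$, $e_2=\{4,5,6\}$ one has $H_1=\{f\in H:f\cap e_1\neq\emptyset\}$ with $|H_1|=\binom{6}{3}-1=19>15=\binom{6}{2}$. The lemma itself holds here ($|H|=20\le 2\cdot 15$); it is your per-block bound that breaks, and the same obstruction occurs for $H=\binom{[2r]}{r}$ for every $r\ge 2$. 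Frankl's actual argument does not partition by a fixed matching: after shifting he constructs a single map $H\to\partial_{r-1}(H)$ and shows directly that every fibre has size at most $s$, the hypothesis $\nu(H)\le s$ entering only in the final step where a fibre of size $s+1$ would, via shiftedness, produce $s+1$ pairwise disjoint edges.
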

In fact, Frankl \cite{frankl-matching} showed 
that if $H$ is an $r$-graph that has no $(s+1)$-matching then $|H|\leq s|\partial_{r-1}
(H)|$.
For an integer $s\geq 2$, an {\it $s$-sunflower} with kernel $D$ is a collection of $s$ distinct sets $A_1,\ldots, A_s$ such that $\forall i,j\in [s], i\neq j$, $A_i\cap A_j=D$.
Given an $r$-graph $G$ and a set $D$, define the {\it kernel degree} of $D$ in $G$,
denoted by $d^*_G(D)$ to be 
$$d^*_G(D)=\max\{s: \mbox{$G$ contains an $s$-sunflower with kernel $D$}\}.$$

\begin{lemma} \label{kernel-degree}
Given an $r$-graph $G$ on $[n]$ and integers $p,d>0$, where $d<r$. There exists a subgraph $G'$ of $G$ with $|G'|\geq |G|-p\binom{n}{d}\binom{n}{r-d-1}$ such that for every $d$-set $D$ in $[n]$ if $d_{G'}(D)>0$ then  $d^*_{G'}(D)>p$.
\end{lemma}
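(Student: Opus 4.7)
The plan is a greedy ``peeling'' process on $d$-sets whose link graph has small matching number. Set $G_0=G$. At step $i$, if there exists a $d$-set $D\subseteq [n]$ with $1\le d^*_{G_i}(D)\le p$, pick such a $D$ and obtain $G_{i+1}$ from $G_i$ by deleting every edge of $G_i$ that contains $D$. Stop as soon as no such $D$ exists, and call the final graph $G'$. By the stopping condition, every $d$-set $D$ will satisfy either $d_{G'}(D)=0$ or $d^*_{G'}(D)>p$, which is exactly the conclusion of the lemma.

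The two things to verify are that the process terminates quickly and that few edges are removed along the way. For termination: once $D$ is processed at step $i$, every edge of $G_j$ for $j>i$ misses $D$, so each $d$-set is selected at most once and the process lasts at most $\binom{n}{d}$ steps. For the per-step bound, the key observation is that an $s$-sunflower with kernel $D$ in $G_i$ corresponds bijectively to an $s$-matching in the link $\cL_{G_i}(D)$, viewed as an $(r-d)$-graph on $[n]\setminus D$. Hence $d^*_{G_i}(D)\le p$ means that $\cL_{G_i}(D)$ has no $(p+1)$-matching, and applying Lemma \ref{matching} to $\cL_{G_i}(D)$ yields $|\cL_{G_i}(D)|\le p\binom{n-d}{r-d-1}\le p\binom{n}{r-d-1}$, which is precisely the number of edges removed at step $i$.

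Multiplying the two bounds gives a total of at most $p\binom{n}{d}\binom{n}{r-d-1}$ deleted edges, so $|G'|\ge |G|-p\binom{n}{d}\binom{n}{r-d-1}$, as desired. I do not foresee a genuine obstacle here: the argument is a clean application of Frankl's matching bound to link graphs combined with a counting of how many times the peeling step can fire. The only cosmetic point that needs to be handled is the convention for $d^*_{G_i}(D)$ when $D$ lies in exactly one edge of $G_i$: we interpret a single edge as a trivial sunflower so that $d^*_{G_i}(D)\ge 1\Longleftrightarrow d_{G_i}(D)\ge 1$, which keeps the stopping criterion perfectly in step with the statement. Under any other convention one simply treats degree-$1$ kernels as a trivial base case and deletes the lone edge, which changes nothing in the accounting.
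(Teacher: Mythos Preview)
Your proof is correct and is essentially the same argument as the paper's: both perform a greedy peeling of $d$-sets and invoke Frankl's matching bound (Lemma~\ref{matching}) on the link graph. The only cosmetic difference is that the paper peels $d$-sets with small \emph{degree} (at most $p\binom{n}{r-d-1}$) and applies Frankl at the end to deduce large kernel degree, whereas you peel $d$-sets with small \emph{kernel degree} and apply Frankl during the peeling to bound the number of deleted edges; these are equivalent rearrangements of the same idea.
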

\begin{proof}
Starting with $G$, as long as there exists a $d$-set $D$ of vertices such that the degree of $D$ in the remaining graph is nonzero but is
at most $p\binom{n}{r-d-1}$ we remove all the edges containing $D$. Let $G'$ denote the final remaining subgraph of $G$. Then $|G'|\geq |G|-p\binom{n}{r-d-1}\binom{n}{d}$. It is possible that $G'$ is empty. If $G'$ is nonempty, then
for every $d$-set $D$ that has nonzero degree in $\cG'$, we have $|\cL_{G'}(D)|=d_{G'}(D)> p\binom{n}{r-d-1}$. Since $\cL_{G'}(D)$ is an $(r-d)$-graph on $[n]$, by
Lemma \ref{matching}, it contains a $(p+1)$-matching. Hence, $d^*_{G'}(D)>p$.
\end{proof}

\begin{lemma} \label{removal}
Let $p=n(H_{m+1}^F)$. If $G$ is an $H_{m+1}^F$-free graph on $[n]$,
then $G$ contains an $\cH_{m+1}^F$-free subgraph $G'$ with $|G'|\geq
|G|-p\binom{n}{r-3}\binom{n}{2}$. 
In particular, $\pi(H_{m+1}^F)=\pi(\cH_{m+1}^F)$.
\end{lemma}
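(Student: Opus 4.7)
The plan is to apply Lemma~\ref{kernel-degree} with $d=2$ and kernel-degree threshold equal to $p=n(H_{m+1}^F)$. This yields a subgraph $G'\subseteq G$ with $|G'|\geq |G|-p\binom{n}{r-3}\binom{n}{2}$ in which every pair of vertices with positive degree has kernel degree exceeding $p$ in $G'$; equivalently, every covered pair $\{x,y\}$ is contained in more than $p$ edges of $G'$ whose parts outside $\{x,y\}$ form a pairwise disjoint family of $(r-2)$-sets. It then suffices to show that $G'$ itself is $\cH_{m+1}^F$-free, because any embedded $H_{m+1}^F\subseteq G'\subseteq G$ would contradict the hypothesis on $G$.

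Suppose for contradiction that some $L\in \cH_{m+1}^F$ sits inside $G'$ with core $C=\{v_1,\ldots,v_{m+1}\}$, labeled so that the embedded $F$ occupies $v_1,\ldots,v_{n(F)}$. I would build an embedded copy of $H_{m+1}^F$ in $G'$ by greedily choosing the required extensions. For each pair $\{v_i,v_j\}\subseteq C$ not covered by $F$, I must select an $(r-2)$-set $B_{i,j}$ with $\{v_i,v_j\}\cup B_{i,j}\in G'$ so that all the chosen $B_{i,j}$'s are pairwise disjoint and disjoint from $C$. Because $\{v_i,v_j\}$ is covered in $L\subseteq G'$, its kernel degree in $G'$ exceeds $p$, providing more than $p$ edges through $\{v_i,v_j\}$ whose off-pair parts are pairwise disjoint $(r-2)$-sets. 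At each greedy step, the set of forbidden vertices (namely $C$ together with the previously chosen extensions) has size at most $(m+1)+(u-1)(r-2)=p-(r-2)<p$, where $u=(p-m-1)/(r-2)$ is the total number of uncovered pairs to process; since the available extensions are pairwise disjoint, each forbidden vertex kills at most one of them, so strictly fewer than $p$ extensions are blocked and an unblocked one always remains. Iterating over all uncovered pairs embeds $H_{m+1}^F$ in $G'$, the desired contradiction.

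The density equality is then immediate: since $H_{m+1}^F\in \cH_{m+1}^F$ one has $ex(n,H_{m+1}^F)\geq ex(n,\cH_{m+1}^F)$, while the edge bound just established gives the reverse up to an additive $O(n^{r-1})$ error term, which vanishes after dividing by $\binom{n}{r}$ and letting $n\to\infty$. The main subtlety is the bookkeeping in the greedy step, which is calibrated by the choice $p=n(H_{m+1}^F)$: this is precisely the value that guarantees the running total of forbidden vertices stays strictly below the kernel-degree lower bound throughout the construction, so no extra slack or case analysis is required.
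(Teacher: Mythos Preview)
Your proof is correct and follows essentially the same approach as the paper's: apply Lemma~\ref{kernel-degree} with $d=2$ to obtain $G'$, then argue by contradiction that a member of $\cH_{m+1}^F$ in $G'$ would allow a greedy construction of $H_{m+1}^F$ using the sunflower structure at each uncovered pair. Your bookkeeping for the greedy step is slightly more explicit than the paper's (which simply notes that the partial copy has at most $p$ vertices at each stage), but the underlying argument is identical.
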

\begin{proof} 
Let $G$ be the given $H_{m+1}^F$-free graph on $[n]$. By Lemma \ref{kernel-degree},
$G$ contains a subgraph $G'$ with $|G'|\geq |G|-p\binom{n}{r-3}\binom{n}{2}$
such that for every pair $\{a,b\}$ of vertices if $d_{G'}(\{a,b\})>0$ then $d^*_{G'}(\{a,b\})>p$.
We show that $G'$ is $\cH_{m+1}^F$-free.  Suppose for contradiction that 
$G'$ contains a member $H$ of $\cH_{m+1}^F$. Let $C$ denote the core of $L$. Then $H[C]$ contains a copy of $F$.
Let $\{x,y\}$ be any pair in $C$ that is uncovered by $F$. By definition, $\{x,y\}$ is
covered by some edge of $H$ and hence by some edge of $G'$. So $d_{G'}(\{x,y\})\neq 0$ and thus $d^*_{G'}(\{x,y\})> p$. So $G'$ contains a $(p+1)$-sunflower $\cS$ with kernel $\{x,y\}$. 
Since $p=n(H_{m+1}^F)\geq |C|$, we can find an edge $e$ of $\cS$ containing $\{x,y\}$ that intersects $C$ only in $\{x,y\}$. We can continue the process and cover each uncovered pair $\{a,b\}$ in $C$ using an edge that intersects the current partial
copy $H'$ of $H_{m+1}^F$ only in $a$ and $b$. We can do so since $\{a,b\}$ is the 
kernel of a $(p+1)$-sunflower and $H'$ has at most $p$ vertices. 
 Thus we can find a copy of $H_{m+1}^F$ in $G'$, and thus in $G$, contradicting our assumption that $G$ is $H_{m+1}^F$-free. Hence $G'$ is $\cH_{m+1}^F$-free and
$|G|\leq ex(n,\cH_{m+1}^F)+p\binom{n}{r-3}\binom{n}{2}$. 
Since $ex(n,\cH_{m+1}^F)\leq ex(n,H_{m+1}^F)\leq ex(n,\cH_{m+1}^F)+p\binom{n}{r-3}\binom{n}{2}$, we have $\pi(H_{m+1}^F)=\pi(\cH_{m+1}^F)$.
\end{proof}

%%%%%%%%%%%%%%%%%%%%%%%%%%%%%%%%%
\section{Stability of near extremal families and proof of Theorem \ref{stability}}

We use Pikhurko's approach \cite{pikhurko} to establish stability of near extremal families.
First, as in \cite{pikhurko} (and in \cite{HK}, \cite{NY}), we modify the usual symmetrization process by adding a cleaning component. In the algorithm, at any stage, when we discuss the equivalence class of a vertex, it refers to the equivalence class under $\sim$ that we defined earlier.
We always automatically readjust equivalence classes after we apply an operation to a graph. Given an $r$-graph $L$ and a real $\a$ with $0<\a\leq 1$, we say that $L$ is
{\it $\a$-dense} if $L$ has minimum degree at least $\a\binom{n(L)-1}{r-1}$.

\begin{algorithm}{\rm (Symmetrization and cleaning with threshold $\a$)} 
\label{symm-clean}
{\rm 

\noindent {\bf Input:} An $r$-graph $G$.

\noindent{\bf Output:} An $r$-graph $G^*$.

\noindent{\bf Initiation:} Let $G_0=H_0=G$. Set $i=0$.

\noindent{\bf Iteration:}  
For each vertex $u$ in $H_i$, let $A_i(u)$ denote the equivalence class that $u$ is in.
If either  $H_i$ is empty or $H_i$ contains no two nonadjacent nonequivalent vertices, then let $G^*=H_i$ and terminate. Otherwise, let $u,v$ be two nonadjacent nonequivalent vertices in $H_i$, where $d_{H_i}(u)\geq d_{H_i}(v)$. We symmetrize each vertex in $A_i(v)$ to $u$.
Let $G_{i+1}$ denote the resulting graph. Note that after the symmetrization, the equivalence classes may change in $G_{i+1}$. 
But they are still well-defined.  If $G_{i+1}$
has minimum degree at least $\a\binom{n(G_{i+1})-1}{r-1}$, i.e. if $G_{i+1}$ is $\a$-dense, then let $H_{i+1}=G_{i+1}$.
Otherwise we let $L=G_{i+1}$ and repeat the following: let $z$ be any vertex of minimum degree in $L$. We redefine $L=L-z$
unless in forming $G_{i+1}$ from $H_i$ we symmetrized the equivalence class of
some vertex $v$ in $H_i$ to some vertex in the equivalence class of $z$ in $H_i$. 
In that case,
we redefine $L=L-v$ instead.  We repeat the process until $L$ becomes either $\a$-dense or empty. Let $H_{i+1}=L$. We call the process of forming $H_{i+1}$ from $G_{i+1}$ ``cleaning''. Let $Z_{i+1}$ denote the set of vertices removed, so that
$H_{i+1}=G_{i+1}-Z_{i+1}$. By our definition, if $H_{i+1}$ is nonempty then it is $\alpha$-dense.
} 
\end{algorithm}

Our main theorem in this section is the following technical theorem.
Since we want the theorem to be as widely applicable as possible, the statements are rather technical.

\begin{theorem} \label{stability-general}
Let $m\geq r\geq 2$ be integers. Let $F$ be an $r$-graph with $\pi_\lambda(F)\leq \cmr$ such that either $n(F)\leq m$ or $n(F)=m+1$ and $F$
contains a vertex of degree $1$. There exists a real $\gamma_0=\gamma_0(m,r)>0$
such that for every positive real $\gamma<\gamma_0$, there exist a real $\delta>0$ and an integer $n_0$ such that the following is true for all $n\geq n_0$. Let $G$ be an $\cH_{m+1}^F$-free $r$-graph on $[n]$ with $|G|>(\cmr -\delta)\binom{n}{r}$. 
Let $G^*$ be the final graph produced by Algorithm \ref{symm-clean} with threshold $\cmr-\gamma$. Then $n(G^*)\geq (1-\gamma)n$ and $G^*$ is $(\cmr-\gamma)$-dense. Further, if there is a set $W\subseteq V(G^*)$ with $|W|\geq (1-\gamma_0)|V(G^*)|$ such that $W$ is the union of a collection of at most $m$ equivalence classes of $G^*$, then $G[W]$ is $m$-partite.
\end{theorem}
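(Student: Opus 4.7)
The proof splits into three stages: a vertex-count bound, a density check, and the partiteness claim.

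\textbf{Stage 1} (size of $G^*$). Let $\alpha=\cmr-\gamma$ and $t=n-n(G^*)$. By Proposition \ref{symmetrization-properties}(1), the rule $d(u)\geq d(v)$ guarantees that no symmetrization decreases the edge count, while each cleaning step removes at most $\alpha\binom{n-1}{r-1}$ edges. Iterated application of Proposition \ref{H-free} shows $G^*$ is $\cH_{m+1}^F$-free, so Theorem \ref{asymptotics} yields $|G^*|\leq \cmr(n-t)^r/r!$. Comparing this with the lower bound $|G^*|\geq(\cmr-\delta)\binom{n}{r}-t\alpha\binom{n-1}{r-1}$ and expanding $(n-t)^r$, one obtains
\[
t\gamma\cdot\frac{n^{r-1}}{(r-1)!}\leq \delta\cdot\frac{n^r}{r!}+O(t^2n^{r-2}),
\]
so $t<\gamma n$ once $\delta$ is chosen small compared to $\gamma^2$. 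In particular $G^*$ is nonempty, and the exit condition of Algorithm \ref{symm-clean} then forces $G^*$ to be $(\cmr-\gamma)$-dense.

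\textbf{Stage 2} (the quotient and $\ell=m$). At termination Proposition \ref{symmetrization-properties}(2) presents $G^*$ as a blowup of its quotient $Q=G^*[S]$, where $S$ has one representative per equivalence class; $Q$ covers pairs. Since $Q\subseteq G^*$ is $\cH_{m+1}^F$-free and $Q$ covers pairs, no $(m+1)$-subset of $V(Q)$ induces a copy of $F$ in $Q$. Writing $W=W_1\cup\cdots\cup W_\ell$ with $\ell\leq m$, the $\ell$-partite structure of $G^*[W]$ gives $|G^*[W]|\leq\frac{[\ell]_r}{\ell^r}\cdot\frac{|W|^r}{r!}$, whereas Stage 1 together with $|W|\geq(1-\gamma_0)n(G^*)$ gives $|G^*[W]|\geq(\cmr-o(1))|W|^r/r!$. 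Since $\cmr>[\ell]_r/\ell^r$ for $\ell<m$ by a gap depending only on $m$ and $r$, choosing $\gamma_0$ smaller than this gap forces $\ell=m$.

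\textbf{Stage 3} (partiteness of $G[W]$). Suppose for contradiction that some edge $e\in G[W]$ has $\{a,b\}\subseteq W_i$. I would produce a member of $\cH_{m+1}^F$ in $G$ with core $C=\{a,b\}\cup\{c_j:j\neq i\}$, where $c_j\in W_j$ is to be chosen. Two conditions are needed: (i) every pair of $C$ is covered by a $G$-edge, and (ii) $G[C]\supseteq F$. For (i), $\{a,b\}$ is covered by $e$; for any other pair $\{x,y\}$ with $x,y$ in different classes of $W$, the $(r-2)$-codegree of $\{x,y\}$ in $G^*$ is $\Omega(n^{r-2})$ (because $Q$ covers pairs and $G^*$ is a blowup), and a discrepancy argument between $G$ and $G^*$---using that the two differ only by rewirings and deletions involving $O(\gamma n)$ vertices from Stage 1---keeps this codegree positive in $G$ for all but $o(|W_j||W_k|)$ pairs $(x,y)\in W_j\times W_k$; a union bound over class-pairs lets the $c_j$'s be chosen so every pair of $C$ is covered in $G$. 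For (ii), the case hypothesis on $F$ is essential. When $n(F)\leq m$, embed $F$ into $\{a,c_1,\ldots,c_{m-1}\}$ so each $F$-edge maps to an $r$-tuple of distinct class-representatives, then refine the choice of the $c_j$'s by the codegree mechanism of (i) to realize each such $r$-tuple as a $G$-edge. When $n(F)=m+1$ with a degree-$1$ vertex $v_0\in V(F)$, first embed $F-v_0$ into $\{a,c_1,\ldots,c_{m-1}\}$ by the same method, then map $v_0$ to $b$ and realize the unique $F$-edge incident to $v_0$ through $b$ by another codegree argument. The resulting member of $\cH_{m+1}^F$ in $G$ contradicts $\cH_{m+1}^F$-freeness of $G$.

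The principal obstacle is Stage 3 step (ii): one must simultaneously realize the $O(1)$ edges of $F$ as specific $G$-edges on the chosen $C$. The key mechanism is a union bound: since each $|W_j|=\Omega(n)$ while $F$ has only $O(1)$ edges, and each edge-constraint rules out only $o(n)$ choices per class by the codegree argument, enough valid choices of the $c_j$'s remain to meet all constraints at once, and similarly for the extra constraints in the $n(F)=m+1$ case.
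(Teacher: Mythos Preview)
Your Stage 1 is essentially the paper's Lemma \ref{removed-vertices}, and is fine.

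Stage 3 contains a genuine gap. You write that ``$G$ and $G^*$ differ only by rewirings and deletions involving $O(\gamma n)$ vertices from Stage 1,'' and use this to transfer codegree information from $G^*$ back to $G$. But Stage 1 only bounds the number of \emph{deleted} vertices by $\gamma n$; it says nothing about how many vertices have had their links altered by symmetrization. In Algorithm \ref{symm-clean}, each symmetrization step rewires an entire equivalence class, and these classes can have size $\Omega(n)$. Consequently a vertex $a\in W_i$ may, in the original $G$, be nonadjacent to every vertex of $W_j$, even though in $G^*$ the pair $\{a,c_j\}$ has codegree $\Omega(n^{r-2})$ for every $c_j\in W_j$. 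Your ``discrepancy argument'' therefore does not go through, and with it both parts (i) and (ii) of Stage 3 collapse: you cannot guarantee that the pairs in $C$ are covered in $G$, nor that the required $F$-edges lie in $G$.

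The paper's remedy is not to compare $G$ with $G^*$ at all, but to run a \emph{reverse induction} through the intermediate graphs $G_0,H_0,G_1,H_1,\ldots,G_s=G^*$. The base case is $G_s[W]=G^*[W]$, which is $m$-partite by hypothesis. For the inductive step, if $G_{i+1}[W]$ is $m$-partite and $G_{i+1}$ was formed from $H_i$ by symmetrizing the class $C_v$ to $u$, then on $W'=W\setminus C_v$ the three graphs $G_i,H_i,G_{i+1}$ coincide. Any bad edge of $H_i[W]$ must pass through $C_v$, and one then builds a member of $\cH^F_{m+1}$ \emph{inside $H_i$} (not inside $G$), using the high minimum degree of $H_i[W]$ together with the $m$-partite structure of $G_{i+1}[W]$ to locate complete transversals (Lemmas \ref{min-degree}, \ref{near-complete}, \ref{Kplus}). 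Since $H_i$ is $\cH^F_{m+1}$-free, this yields the contradiction. The reverse induction is the heart of the proof precisely because it sidesteps the problem you ran into: at each single step only one class has been rewired, so edge information on the rest of $W$ is preserved exactly.
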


To prove the theorem, we first need to develop a series of lemmas.
First let us mention a routine fact, which is established in \cite{mubayi} and
can be verified straightforwardly.
\begin{lemma} \label{balanced-parts} {\rm (Claim 1 in \cite{mubayi})}
For any integers $m\geq r\geq 2$ and real $\gamma>0$ there exist a real $\beta=\beta(\ve)>0$
and an integer $M_1$ such that any $m$-partite $r$-graph $G$ of order $n\geq M_1$ and
size at least $(\cmr-\beta)\binom{n}{r}$ the number of vertices in each part is
between $\frac{n}{m}-\ve n$ and $\frac{n}{m}+\ve n$.
\end{lemma}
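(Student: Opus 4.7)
The plan is to reduce the lemma to a continuity-and-compactness statement about the elementary symmetric polynomial
$$\phi(x_1,\ldots,x_m) := \sum_{S \in \binom{[m]}{r}} \prod_{i\in S} x_i$$
on the standard simplex $\Delta_m = \{x \in [0,\infty)^m : \sum_i x_i = 1\}$. With parts $V_1,\ldots,V_m$ of sizes $a_i$, each edge of $G$ meets each part in at most one vertex, so
$$|G| \leq \sum_{S \in \binom{[m]}{r}} \prod_{i \in S} a_i = n^r\, \phi(x),\qquad x_i := a_i/n.$$
Using $\binom{n}{r} = (1-o(1))\,n^r/r!$ for $n \geq M_1$ large enough, the density hypothesis $|G| \geq (\cmr-\beta)\binom{n}{r}$ transfers to a lower bound $\phi(x) \geq \binom{m}{r}/m^r - \beta'$, where $\beta'$ is $\beta/r!$ up to the lower-order $o(1)$ term that $M_1$ will absorb.

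The crux is that $\phi$ attains its maximum $\binom{m}{r}/m^r$ on $\Delta_m$ \emph{uniquely} at $\bar{x} = (1/m,\ldots,1/m)$. I would prove this by a standard smoothing argument: if $x_i \neq x_j$, decompose
$$\phi(x) = x_i x_j\, e_{r-2}(x^{ij}) + (x_i+x_j)\, e_{r-1}(x^{ij}) + e_r(x^{ij}),$$
where $x^{ij}$ is $x$ with the $i$-th and $j$-th coordinates deleted; averaging $x_i, x_j$ leaves the last two terms unchanged but strictly increases the product $x_i x_j$ by AM-GM, so $\phi$ strictly increases provided $e_{r-2}(x^{ij}) > 0$. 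The only way to have $e_{r-2}(x^{ij}) = 0$ is that fewer than $r-2$ of the remaining coordinates are positive; then at most $r-1$ coordinates of $x$ are positive and $\phi(x) = 0$, which is manifestly not a maximum. Hence every maximizer has all coordinates equal, so $\bar{x}$ is the unique maximum.

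To finish, fix $\ve > 0$ and let $K_\ve = \{x \in \Delta_m : \max_i |x_i - 1/m| \geq \ve\}$. Since $K_\ve$ is compact and excludes $\bar{x}$, continuity of $\phi$ together with uniqueness of the maximum yields $\eta = \eta(\ve,m,r) > 0$ with $\phi(x) \leq \binom{m}{r}/m^r - \eta$ on $K_\ve$. Choosing $\beta$ to be a small multiple of $\eta$ (and $M_1$ large enough to kill the $o(1)$ slack) forces $x \notin K_\ve$, i.e.\ each $|V_i|$ lies in $[n/m - \ve n,\, n/m + \ve n]$. The only step with any content is the uniqueness of the maximizer of $\phi$ (a stability version of AM-GM for elementary symmetric polynomials); once that is established, the remainder is routine compactness bookkeeping, so I do not anticipate any serious obstacle.
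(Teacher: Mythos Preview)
The paper does not actually prove this lemma; it quotes it as Claim~1 from Mubayi~\cite{mubayi} and remarks that it ``can be verified straightforwardly.'' Your argument is correct and is precisely the natural verification: the edge count of an $m$-partite $r$-graph is bounded by $n^r e_r(x)$ with $x_i=|V_i|/n$, the elementary symmetric polynomial $e_r$ is uniquely maximized on the simplex at the barycenter (your smoothing step and the handling of the degenerate case $e_{r-2}(x^{ij})=0$ are both fine), and compactness converts unique maximization into the quantitative stability statement. There is nothing to compare against in the present paper, and no gap in what you wrote.
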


We may assume that $\ve$ is sufficiently small.  
First, we choose small positive reals
$$1\gg c_2\gg c_1\gg \gamma_0 >0,$$
and an integer $n_1$. Our first condition on $n_1$ is that $n_1\geq M_1$, where $M_1$ is given in 
Lemma \ref{balanced-parts}, and that $n_1$ satisfies \eqref{c1-choice} given below. Other conditions on $n_1$ will be stated implicitly throughout the proofs.
We now describe the conditions on the constants as follows.
First we choose $c_1$ to be small enough and $n_1$ large enough so that for all
$N\geq n_1$, we have
\begin{equation}\label{c1-choice}
\left(\cmr-c_1\right)\binom{N-1}{r-1}\geq \left(\cmr-2c_1\right)\frac{N^{r-1}}{(r-1)!}
=\binom{m-1}{r-1}\left(\frac{N}{m}\right)^{r-1}-2c_1\frac{N^{r-1}}{(r-1)!}.
\end{equation}
Next, subject to \eqref{c1-choice}, we choose $c_1,c_2$ to be small enough and $n_1$ large enough so that for $N\geq n_1$,
\begin{equation} \label{constant-choice1}
\left(\cmr-c_1\right)\binom{N-1}{r-1}-\binom{m-2}{r-1}\left(\frac{N}{m}+c_2N\right)^{r-1}>\frac{1}{2}\binom{m-2}{r-2} \left(\frac{N}{m}\right)^{r-1}
>\frac{1}{2m^{r-1}} N^{r-1}.
\end{equation} 
Such choices exist by \eqref{c1-choice} and the fact that
$\binom{m-1}{r-1}-\binom{m-2}{r-1}=\binom{m-2}{r-2}$. In addition, we can
make our choice of $c_1,c_2$ solely dependent on $m$ and $r$.
Now, subject to \eqref{c1-choice} and \eqref{constant-choice1}, we choose $c_1,c_2,\gamma_0$ to
satisfy
\begin{eqnarray} \label{constant-choices}
 c_2<\frac{1}{6m}\left(\frac{1}{10m}\right)^{m-1}, \quad \quad c_2<\frac{1}{(2m)^{mr}}, \quad \quad  c_1<\min\{\frac{c_2}{8}, \beta(\frac{c_2}{2m})\}, \quad
\quad \gamma_0+4\gamma_0(r-1)<c_1, 
\end{eqnarray}
where the function $\beta$ is defined as in Lemma \ref{balanced-parts}.
Note that all $c_1,c_2,\gamma_0$ can be defined solely dependent on $m$ and $r$.

Now, let $\gamma<\gamma_0$ be given. Choose $\delta>0$ to be small enough so that
\begin{equation} \label{gamma-delta-relation}
\frac{\gamma-\delta}{\gamma+\delta}\geq 1-\gamma.
\end{equation}
Let $n_0=2n_1$.
Let $G$ be a $\cH_{m+1}^F$-free graph on $[n]$, where $n\geq n_0$, such that
$$|G|>(\cmr-\delta)\binom{n}{r}.$$

Let $G^*$ be the final graph obtained by applying Algorithm \ref{symm-clean} to $G$ with threshold $\cmr-\gamma$. Suppose the algorithm terminates after $s$ steps.
So, $G^*=G_s$. 

\begin{lemma} \label{removed-vertices}
Let $Z^*=\bigcup_{i=1}^s Z_i$, i.e. $Z^*$ is the set of removed vertices by Algorithm
\ref{symm-clean} with threshold $\cmr-\gamma$. Then $|Z^*|<\gamma n$. Hence, $n(G^*)\geq (1-\gamma)n$ and $G^*$ is $(\cmr-\gamma)$-dense.
\end{lemma}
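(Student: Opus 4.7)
The plan is an edge-count comparison between $|G|$ and $|G^*|$. Set $\alpha=\cmr-\gamma$ and $t=|Z^*|$. Symmetrization is edge-nondecreasing (as in Proposition~\ref{symmetrization-properties}), and by Proposition~\ref{H-free} together with the triviality of vertex deletion, $G^*$ is $\cH_{m+1}^F$-free; hence Theorem~\ref{asymptotics} yields the upper bound $|G^*|\leq\cmr\cdot\frac{(n-t)^r}{r!}$, while the construction of $G^*$ gives $|G^*|\geq|G|-(\text{edges lost to cleaning})>(\cmr-\delta)\binom{n}{r}-(\text{edges lost})$. The heart of the proof is to show that the total number of edges lost is at most $\alpha\bigl[\binom{n}{r}-\binom{n-t}{r}\bigr]$.

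For this, I claim that every cleaning removal from a current graph $L$ of size $N$ deletes strictly fewer than $\alpha\binom{N-1}{r-1}$ edges. When the algorithm removes the minimum-degree vertex $z$, this follows directly from the threshold. The delicate case is the ``unless'' clause, where the algorithm removes $v\in A_i(v)$ in place of $z\in A_i(u)$. Here the key invariant is that $\cL_L(z)=\cL_L(v)$ throughout this cleaning phase. Indeed, $z\sim u$ in $H_i$ forces $\cL_{H_i}(z)=\cL_{H_i}(u)$, and a short equivalence-class check shows that the nonadjacency of $u,v$ in $H_i$ forces every vertex of $A_i(u)$ to be nonadjacent to every vertex of $A_i(v)$ in $H_i$; hence the symmetrization $A_i(v)\to u$ touches no edge incident to $z$, giving $\cL_{G_{i+1}}(z)=\cL_{H_i}(u)=\cL_{G_{i+1}}(v)$. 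Each subsequent vertex deletion in the same cleaning phase removes the same edges from both link graphs, so $d_L(v)=d_L(z)<\alpha\binom{N-1}{r-1}$ at the moment of removal. Summing over all cleaning removals and using the hockey-stick identity $\sum_{j=1}^{t}\binom{n-j}{r-1}=\binom{n}{r}-\binom{n-t}{r}$ gives the claimed total edge-loss bound.

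Combining the two edge bounds and rearranging yields
\[
(\gamma-\delta)\binom{n}{r} < \cmr\cdot\tfrac{(n-t)^r}{r!} - \alpha\binom{n-t}{r}.
\]
Since $\binom{n-t}{r}\leq\frac{(n-t)^r}{r!}\leq\binom{n-t}{r}(1+O(1/n))$ for $n-t$ large, the right-hand side is at most $\gamma\binom{n-t}{r}(1+O(1/n))$, so $\binom{n-t}{r}/\binom{n}{r}\geq(\gamma-\delta)/(\gamma+O(1/n))$. Since $\binom{n-t}{r}/\binom{n}{r}\leq(1-t/n)^r$, taking $r$-th roots yields $1-t/n\geq\bigl((\gamma-\delta)/\gamma\bigr)^{1/r}-O(1/n)$. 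The elementary inequality $(1-\gamma)^r\leq 1-\gamma$ gives $\gamma(1-\gamma)^r\leq\gamma-\gamma^2$, and \eqref{gamma-delta-relation} implies $\delta<\gamma^2$, so $\gamma-\gamma^2<\gamma-\delta$; combined, $(1-\gamma)^r<(\gamma-\delta)/\gamma$. For $n\geq n_0$ large enough to absorb the $O(1/n)$ slack, we conclude $1-t/n>1-\gamma$, i.e., $t<\gamma n$. Hence $n(G^*)\geq(1-\gamma)n$, and by the algorithm's termination rule $G^*$ is $(\cmr-\gamma)$-dense (the edge lower bound ensures $G^*$ is nonempty).

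The main technical obstacle is the ``unless'' clause: without the invariant $\cL_L(z)=\cL_L(v)$ identified above, removing $v$ could a priori destroy many more than $\alpha\binom{N-1}{r-1}$ edges and spoil the uniform edge-loss bound. Everything else reduces to standard hypergraph edge-counting.
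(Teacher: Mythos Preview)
Your proof is correct and follows the same edge-count comparison as the paper: bound $|G^*|$ above via Theorem~\ref{asymptotics}, below via the per-deletion threshold, and extract $t<\gamma n$ from the resulting inequality together with \eqref{gamma-delta-relation}. You in fact go further than the paper by carefully justifying the edge-loss bound in the ``unless'' case via the invariant $\cL_L(z)=\cL_L(v)$ (the paper simply asserts the bound without addressing this case); your endgame algebra, deriving $\delta<\gamma^2$ and then $(1-\gamma)^r<(\gamma-\delta)/\gamma$, is a mild variant of the paper's direct appeal to $\tfrac{\gamma-\delta}{\gamma+\delta}\geq 1-\gamma$.
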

\begin{proof}
Let $p=|Z^*|$. Let $\alpha=\cmr$.
By the algorithm, when symmetrizing the number of edges doesn't
decrease. When deleting a vertex, the number of edges we lose is at most $(\a-\gamma)\binom{x-1}{r-1}$, where $x$ is the number of vertices remaining in the graph before the deletion of that vertex. Hence
\begin{eqnarray*}
|G_s|&\geq& |G|-(\a-\gamma)\sum_{i=1}^p \binom{n-i}{r-1}\\
&\geq& (\a-\delta)\binom{n}{r}-(\a-\gamma)\left[\binom{n}{r}-\binom{n-p}{r}\right].\\
\end{eqnarray*}
Since symmetrizing preserves $\cH_{m+1}^F$-freeness and deletion of vertices certainly also does, $G_s$ is $\cH_{m+1}^F$-free. By Theorem \ref{asymptotics}, $|G_s|\leq \a\frac{(n-p)^r}{r!}<(\a+\delta)\binom{n-p}{r}$, for sufficiently large $n$.
Hence we have
$$(\a+\delta)\binom{n-p}{r} \geq  (\a-\delta)\binom{n}{r}-(\a-\gamma)\left[\binom{n}{r}-\binom{n-p}{r}\right].$$
This yields
$$(\gamma+\delta)\binom{n-p}{r}\geq (\gamma-\delta)\binom{n}{r}.$$
Hence $$\left(\frac{n-p}{n}\right)^r\geq \frac{\binom{n-p}{r}}{\binom{n}{r}}\geq \frac{\gamma-\delta}{\gamma+\delta}\geq 1-\gamma,$$
where the last inequality holds by \eqref{gamma-delta-relation}. Hence
$$1-\frac{p}{n}\geq \left(1-\gamma\right)^\frac{1}{r}\geq 1-\gamma.$$
So $p\leq \gamma n$. Hence $n(G^*)\geq (1-\gamma)n$. Since the algorithm terminates with a nonempty $G^*$, $G^*$ is $(\cmr-\gamma)$-dense.
\end{proof}

Suppose now that  there exists $W\subseteq V(G^*)$
with $|W|\geq (1-\gamma_0)|V(G^*)|$ such that $W$ is the union of at most $m$ equivalence classes of $G^*$.  
Let $N=|W|$. Then 
\begin{equation} \label{N-lower}
N\geq (1-\gamma_0)(1-\gamma)n\geq n-2\gamma_0 n.
\end{equation}
Since $n\geq n_0$, certainly $N\geq n/2\geq n_1$.

\begin{lemma} \label{min-degree}
For each $i\in [s]$, we have $\delta(G_i[W])=\delta(H_i[W])\geq (\cmr-c_1)\binom{N-1}{r-1}$. Hence, in particular $|G_i[W]|=|H_i[W]|\geq (\cmr-c_1)\binom{N}{r}$.
\end{lemma}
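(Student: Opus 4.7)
The plan is to exploit that $W$ sits inside $V(H_s)$ and so is untouched by every cleaning deletion, and then extract a minimum-degree bound on $W$ from the density that the cleaning rule of Algorithm \ref{symm-clean} guarantees for each $H_i$.

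First I would note that $W \subseteq V(H_s) \subseteq V(H_i)$ for every $i \in [s]$, since cleaning only discards vertices. Consequently $W$ is disjoint from the cleaned-out set $Z_i = V(G_i) \setminus V(H_i)$, so restricting to $W$ kills no edges and $G_i[W] = H_i[W]$; this handles the stated equality for free.

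For the minimum-degree bound I would invoke the cleaning step of Algorithm \ref{symm-clean}: each $H_i$ with $i \in [s]$ is either empty or $(\cmr - \gamma)$-dense, and Lemma \ref{removed-vertices} rules out emptiness. Writing $n_i = |V(H_i)|$, this gives $d_{H_i}(w) \geq (\cmr - \gamma)\binom{n_i - 1}{r-1}$ for every $w \in W$. The edges of $H_i$ through $w$ that use a vertex of $V(H_i) \setminus W$ number at most
$$|V(H_i) \setminus W|\binom{n_i - 2}{r-2} \;\leq\; (n - N)\binom{n - 2}{r-2} \;\leq\; 2\gamma_0 n \binom{n - 2}{r-2},$$
using $N \geq (1 - 2\gamma_0)n$ from \eqref{N-lower}. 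Combining these two bounds, together with $\binom{n_i - 1}{r-1} \geq \binom{N-1}{r-1}$ and a routine comparison of $\binom{n - 2}{r-2}$ to $\binom{N - 1}{r - 1}$ via $n \leq (1 + O(\gamma_0)) N$, I expect to obtain
$$d_{G_i[W]}(w) = d_{H_i[W]}(w) \;\geq\; \left(\cmr - \gamma - 4\gamma_0(r-1)\right)\binom{N-1}{r-1}$$
for all $n$ large enough. The constants in \eqref{constant-choices} were chosen so that $\gamma_0 + 4\gamma_0(r-1) < c_1$, and combined with $\gamma < \gamma_0$ this yields the required bound $\delta(G_i[W]) \geq (\cmr - c_1)\binom{N-1}{r-1}$. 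Summing this over the $N$ vertices of $W$ and dividing by $r$ then gives $|G_i[W]| \geq (\cmr - c_1)\binom{N}{r}$.

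The proof is essentially arithmetic, and the only (mild) obstacle is bookkeeping: the $O(\gamma_0)$ error terms coming from the mismatch among $n$, $n_i$ and $N$ must be absorbed into the slack $c_1 - \gamma > 0$, which is exactly the role played by the inequality $\gamma_0 + 4\gamma_0(r-1) < c_1$ in \eqref{constant-choices}.
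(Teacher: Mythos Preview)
Your proposal is correct and follows essentially the same approach as the paper's proof: both use that $H_i$ is $(\cmr-\gamma)$-dense by the cleaning rule, subtract off the at most $2\gamma_0 n\binom{n(H_i)-2}{r-2}$ edges through a vertex of $W$ that leave $W$, and then absorb the error via $\gamma_0 + 4\gamma_0(r-1) < c_1$ from \eqref{constant-choices}. The only cosmetic difference is that the paper keeps the binomials at $n(H_i)$ (using $n \leq 2(n(H_i)-1)$) rather than passing through $n$ as you do, which makes the arithmetic marginally tidier but is otherwise the same computation.
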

\begin{proof}
Note that $\forall i\in [s]$, $G_i[W]=H_i[W]$, since $H_i=G_i-Z_i$ and $Z_i\subseteq Z^*\subseteq [n]\setminus W$.  For convenience, let $\a=\cmr$.
Let $i\in [s]$. By the algorithm,
$H_i$ is $(\a-\gamma)$-dense, that is, $\delta(H_i)\geq (\a-\gamma)\binom{n(H_i)-1}{r-1}$. For each vertex $x$ in $W$, by \eqref{N-lower}, there are at most $2\gamma_0 n\binom{n(H_i)-2}{r-2}$ edges of $H_i$ that contain $x$ and a vertex outside $W$. For each $i\in [s]$, since $n(H_i)\geq |W|>(1-2\gamma_0)n$,
we have $n\leq \frac{1}{1-2\gamma_0}n(H_i)\leq 2(n(H_i)-1)$. We have
\begin{eqnarray*}
\delta(H_i[W])&\geq& (\a-\gamma)\binom{n(H_i)-1}{r-1}-2\gamma_0 n\binom{n(H_i)-2}{r-2}\\
&\geq&(\a-\gamma_0)\binom{n(H_i)-1}{r-1}-4\gamma_0 (n(H_i)-1)\binom{n(H_i)-2}{r-2}\\
&=& (\a-\gamma_0-4\gamma_0(r-1)) \binom{n(H_i)-1}{r-1}\geq (\a-c_1)\binom{N-1}{r-1},
\end{eqnarray*}
where the last inequality follows from \eqref{constant-choices}.
\end{proof}

Next, we develop a routine but useful lemma on near complete $m$-partite $r$-graphs.
Given an $m$-partite $r$-graph $L$ with parts $A_1,\ldots, A_m$, a {\it transveral}
is a set $S$ of vertices consisting of one vertex from each part.  The transveral $S$ is
{\it complete} if it induces a complete $r$-graph on $S$. A transversal that is
not complete is called {\it noncomplete}.

\begin{lemma} \label{near-complete}
Let $L$ be an $m$-partite $r$-graph on $N\geq n_0$ vertices, where $\delta(L)\geq (\cmr-c_1)\binom{N-1}{r-1}$. Let $A_1,\ldots, A_m$ be an $m$-partition of $L$.
Then
\begin{enumerate}
\item For each $j\in [m], ||A_j|-\frac{N}{m}|<c_2 N$.
\item The number of noncomplete transversals is at most $c_2 N^m$.
\item The number of noncomplete transversals containing any one vertex is at most
$c_2 N^{m-1}$.
\end{enumerate}
\end{lemma}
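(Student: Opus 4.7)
The three assertions all follow from turning the minimum degree hypothesis into an edge count and then comparing to the unique maximum for $m$-partite $r$-graphs. First I would observe that by double-counting incidences,
$$|L|\ \geq\ \frac{N\,\delta(L)}{r}\ \geq\ \left(\tfrac{[m]_r}{m^r}-c_1\right)\binom{N}{r}.$$
Since by \eqref{constant-choices} we have $c_1<\beta(c_2/(2m))$, Lemma~\ref{balanced-parts} (applied with $\varepsilon = c_2/(2m)$) gives $\bigl||A_j|-N/m\bigr|<\frac{c_2}{2m}N<c_2 N$, settling part~(1) with room to spare.

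For parts (2) and (3), the key intermediate quantity is the number $T$ of non-edges, i.e.\ the number of $r$-subsets transversal to the partition that are \emph{not} edges. The number $C$ of all transversal $r$-subsets equals $\sum_{S\in\binom{[m]}{r}}\prod_{j\in S}|A_j|=e_r(|A_1|,\ldots,|A_m|)$, and by Maclaurin's inequality (applied to nonnegative reals summing to $N$) we have $C\leq \binom{m}{r}(N/m)^r$. A routine expansion shows $\binom{m}{r}(N/m)^r - \frac{[m]_r}{m^r}\binom{N}{r}=O(N^{r-1})$, so
$$T\ =\ C-|L|\ \leq\ c_1\binom{N}{r}+O(N^{r-1})\ \leq\ \frac{2c_1}{r!}\,N^r$$
for $N$ large enough.

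Every noncomplete transversal contains at least one non-edge, and once the non-edge is fixed on $r$ specific parts, the remaining $m-r$ coordinates are chosen from parts each of size at most $N/m+c_2N/(2m)\leq 2N/m$ (by part~(1)). Hence the number of noncomplete transversals is at most
$$T\cdot (2N/m)^{m-r}\ \leq\ \tfrac{2c_1\cdot 2^{m-r}}{r!\,m^{m-r}}\,N^m,$$
and since $c_1\leq c_2/8$ by \eqref{constant-choices}, this is at most $c_2 N^m$, giving part~(2). For part~(3), fix $v\in A_k$ and split noncomplete transversals through $v$ according to whether the witnessing non-edge contains $v$. Type~(a), non-edges through $v$: their count is at most (max $r$-sets through $v$ transversal to the partition) minus $d_L(v)\leq c_1\binom{N-1}{r-1}+O(N^{r-2})$, each extending to at most $(2N/m)^{m-r}$ transversals. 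Type~(b), non-edges avoiding $v$: at most $T$ non-edges, each extending in the remaining $m-1-r$ non-$v$ parts in at most $(2N/m)^{m-1-r}$ ways. Summing the two contributions and invoking $c_1\leq c_2/8$ again yields the bound $c_2 N^{m-1}$.

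The main obstacle is purely bookkeeping: confirming that the constants chosen in \eqref{constant-choices} — in particular $c_1\leq c_2/8$ and $c_1\leq\beta(c_2/(2m))$ — absorb the universal factors $2^{m-r}/(r!\,m^{m-r})$ that appear in the transversal extension counts; no new combinatorial idea is needed beyond the edge-count comparison and Maclaurin's inequality.
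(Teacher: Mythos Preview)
Your approach is essentially the same as the paper's: part~(1) via Lemma~\ref{balanced-parts}, then bound the number of missing transversal $r$-sets (you invoke Maclaurin where the paper simply compares $|K|$ to $|T_r(N,m)|$), and extend each missing $r$-set over the remaining parts. The split in part~(3) into non-edges containing $v$ versus avoiding $v$ is exactly what the paper does.

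There is one genuine slip in your Type~(a) estimate. You write that the number of non-edges through $v$ is at most $c_1\binom{N-1}{r-1}+O(N^{r-2})$, but the error term here cannot be $O(N^{r-2})$: the count of transversal $r$-sets through $v$ (i.e.\ $|K_x|$ in the paper's notation) depends on the part sizes $|A_j|$, which by part~(1) are only known to within $\tfrac{c_2}{2m}N$ of $N/m$, so the deviation of $|K_x|$ from $\tfrac{[m]_r}{m^r}\binom{N-1}{r-1}$ is of order $c_2 N^{r-1}$, not $N^{r-2}$. The paper handles this by explicitly arguing $|K_x|\leq \tfrac{[m]_r}{m^r}\binom{N-1}{r-1}+(c_2/2)N^{r-1}$ and hence $|K_x\setminus L_x|<(c_1+c_2/2)N^{r-1}<(3c_2/4)N^{r-1}$. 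With this correction in place your Type~(a) and Type~(b) contributions still sum to at most $c_2N^{m-1}$, so the repair is local and the overall structure of your argument is correct.
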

\begin{proof} Since $\delta(L)\geq (\cmr-c_1)\binom{N-1}{r-1}$, we have
$|L|\geq (\cmr-c_1)\binom{N}{r}$. Since $L$ is $m$-partite on $N\geq M_1$ vertices
and $c_1<\beta(\frac{c_2}{2m})$, by Lemma \ref{balanced-parts},
\begin{equation} \label{balanced}
\forall j\in [m],  \left ||A_j|-\frac{N}{m}\right |<(c_2/2m)N<c_2N.
\end{equation}
Hence item 1 holds.
Let $K$ denote the complete $m$-partite $r$-graph with parts $A_1,\ldots, A_m$.
Then $|K|\leq |\Tr(N,m)|\leq (\cmr+c_1)\binom{N}{r}$, for sufficiently large $N$. Since $|L|\geq (\cmr-c_1)\binom{N}{r}$, we have
$$|K\setminus L|<2c_1\binom{N}{r}.$$
Each noncomplete transversal must contain a member of $K\setminus L$.
On the other hand, for a fixed member of $K\setminus L$, there are at most
$(\max_j|A_j|)^{m-r}\leq (2N/m)^{m-r}$ transversals that contain it.
So the number of noncomplete transversals is at most
$$2c_1\binom{N}{r}(2N/m)^{m-r}<2c_1 N^m\leq c_2 N^m.$$
This proves item 2. It remains to prove item 3.
Let $x$ be any vertex. Without loss of generality, suppose $x\in A_1$.  Let $L_x$ denote the link graph of $x$ in $L$. Let $K_x$ denote the complete $(m-1)$-partite $(r-1)$-graph with parts $A_2,\ldots, A_m$. A complete $(m-1)$-partite $(r-1)$-graph $K'$ with $\fl{\frac{N}{m}}$ vertices in each part has at most $\cmr\binom{N-1}{r-1}$ edges.
Since $||A_j|-\frac{N}{m}|<(c_2/2m)N$ for $j=2,\ldots,m$, we can delete at most $(c_2/2)N$ vertices from $K_x$ to obtain a subgraph of $K'$. Hence, $|K_x|\leq |K'|+ (c_2/2)N^{r-1}<\cmr\binom{N-1}{r-1}+(c_2/2)N^{r-1}$.
Since $|L_x|\geq (\cmr-c_1)\binom{N-1}{r-1}$, we have
$$|K_x\setminus L_x|<(c_1+c_2/2) N^{r-1}<(3c_2/4)N^{r-1}.$$

Let $T$ denote the collection of noncomplete transversals that contain $x$.
Every member of $T$ must either contains an edge $e\in K\setminus L$ where
$x\notin e$ or an edge $\{x\}\cup f\in K\setminus L$ where $f\in K_x\setminus L_x$. The number of
members of $T$ of the former type is at most
$$|K\setminus L|\cdot (\max_j |A_j|)^{m-1-r}\leq 2c_1\binom{N}{r}(2N/m)^{m-1-r}<2c_1N^{m-1}\leq (c_2/4) N^{m-1}.$$
The number of members of $T_2$ of the latter type is at most
$$|K_x\setminus L_x|\cdot (\max_j |A_j|)^{m-r}\leq  (3c_2/4)N^{r-1} (2N/m)^{m-r}<(3c_2/4)N^{m-1}.$$
Hence $|T|\leq c_2N^{m-1}$.
\end{proof}
 
\begin{lemma}\label{Kplus}
If $L$ is an $r$-graph obtained from the complete $r$-graph $K$ on $[m]$ by duplicating vertex $1$ into $1'$ and adding an edge $e$ covering $\{1,1'\}$, then $L$ contains a member of $\cH^F_{m+1}$.
\end{lemma}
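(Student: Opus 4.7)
The approach is to exhibit $L$ itself as a member of $\cH_{m+1}^F$, by taking the core to be $C=V(L)=[m]\cup\{1'\}$. Since $|C|=m+1$, this reduces the lemma to verifying two things: every pair in $C$ is covered by some edge of $L$, and $L$ contains a copy of $F$.

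The covering condition is essentially free. Pairs inside $[m]$ are covered by edges of the complete $r$-graph $K$, using $m\geq r$. Each pair $\{1',j\}$ with $j\in\{2,\ldots,m\}$ lies in some duplicated edge, namely any $r$-subset of $\{1',2,\ldots,m\}$ containing both $1'$ and $j$. Finally, the pair $\{1,1'\}$ is covered by the added edge $e$ by construction.

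To embed $F$ into $L$, we split according to the hypothesis on $F$ from Theorem \ref{stability-general}. If $n(F)\leq m$, then $F$ embeds into the complete $r$-graph $K$ on $[m]$, hence into $L$. If $n(F)=m+1$ and $F$ has a vertex $v$ of degree $1$, let $f$ be the unique edge of $F$ containing $v$. Map $v\mapsto 1'$ and assign the remaining $m$ vertices of $F$ bijectively to $[m]$, subject to the constraint that some vertex of $V(F)\setminus f$ is sent to $1$. Such a vertex exists because $|V(F)\setminus f|=m+1-r\geq 1$. Under this map the image of $f$ is an $r$-subset of $\{1'\}\cup\{2,\ldots,m\}$ containing $1'$, which is a duplicated edge of $L$; every other edge of $F$ avoids $v$, hence maps into $[m]$ and is therefore an edge of $K\subseteq L$.

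The main subtlety is in the second case: the only edge of $L$ containing both $1$ and $1'$ is the single edge $e$, so an embedding of $F$ that forced two distinct edges of $F$ to cover the pair $\{1,1'\}$ would have no room to succeed. The degree-$1$ hypothesis on $v$ is precisely what allows us to route only the unique edge $f$ through $1'$ while keeping $1$ out of the image of $f$, sidestepping $e$ entirely and making the entire construction routine.
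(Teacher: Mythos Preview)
Your proof is correct and follows essentially the same approach as the paper's: take $C=[m]\cup\{1'\}$ as the core, verify all pairs in $C$ are covered in $L$, and embed $F$ into $L[C]$ by a case split on whether $n(F)\leq m$ or $n(F)=m+1$ with a degree-$1$ vertex. Your version simply spells out the embedding in the second case (route the degree-$1$ vertex to $1'$, its unique edge to a duplicated edge, and keep $1$ outside that edge), which the paper leaves implicit; one small caveat is that $e$ need not lie inside $[m]\cup\{1'\}$, so writing $C=V(L)$ is not quite right in general, but your argument never uses this and only needs $C=[m]\cup\{1'\}$.
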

\begin{proof} Let $C=[m]\cup \{1'\}$.
Whether $n(F)\leq m$ or $n(F)=m+1$ and $F$ contains a vertex of degree $1$ it is easy to see that $L[C]$ contains $F$ and that all pairs in $C$ are covered in $L$. So $L$ contains  a member of $\cH^F_{m+1}$.
\end{proof}

\medskip

{\bf Proof of Theorem \ref{stability-general}:}
We have already shown that $n(G^*)\geq (1-\gamma)n$ and that $G^*$ is
$(\cmr-\gamma)$-dense. By Lemma \ref{H-free},
symmetrizing preserves $\cH_{m+1}^F$-freeness. Deletion of vertices certainly also does. So $G^*$ is $\cH_{m+1}^F$-free. Next, we want to prove that $G[W]$ is $m$-partite. To do that, 
we use reverse induction on $i$ to prove that $\forall \in [s], G_i[W]$ is $m$-partite.
By our assumption, $G_s[W]$ is $m$-partite. This establishes the basis step.
Let $i< s$. Assume that $G_{i+1}[W]$ is $m$-partite, we prove that $G_i[W]$ must also be $m$-partite. As before, let $N=|W|$.
Let $A^{i+1}_1,\ldots, A^{i+1}_m$ be an $m$-partition of $G_{i+1}[W]$.
By Lemma \ref{min-degree}, we have
$$\delta(G_{i+1}[W])=\delta(H_{i+1}[W])\geq (\cmr-c_1)\binom{N-1}{r-1}
\mbox{ and } |G_{i+1}[W]|=|H_{i+1}[W]|\geq (\cmr-c_1)\binom{N}{r}.$$
By Lemma \ref{near-complete},
$$\forall j\in [m], \left ||A^{i+1}_j|-\frac{N}{m}\right|<c_2N.$$
In particular, we may assume that $\forall j\in [m], N/2m\leq |A^{i+1}_j|\leq 2N/m$.
Let $K_{i+1}$ denote the complete $m$-partite graph on $W$ with parts
$A^{i+1}_1,\ldots, A^{i+1}_m$.

Suppose that in forming $G_{i+1}$ from $H_i$ we symmetrized the equivalence class $C_v$ of $v$ in $H_i$ to some vertex $u$ in $H_i$. If none of $C_v$ is in $W$, then $G_i[W]=G_{i+1}[W]$ and there is nothing to prove. So we may assume that $C_v\cap W\neq \emptyset$. Since all the vertices in $C_v$ are the same, we assume that $v\in C_v\cap W$. By our algorithm this means $u\in W$ as well.
Indeed, since we symmetrized $C_v$ to $u$, by rule in the subsequent cleaning steps
$u$ would be removed only if  all of $C_v$ is removed. Also, from step $i+1$ forward,
$u$ and $v$ always lie in the same equivalence class. Since $W$ is the union of
equivalence classes of $G_s$ and $v\in W$, we should have $u\in W$ as well.
Without loss of generality, suppose $u\in A^{i+1}_1$. Let 
$U_1=A_1^{i+1}\setminus C_v$ and $W'=W\setminus C_v$.
For each $j=2,\ldots, m$, let $U_j=A^{i+1}_j$.
Then $U_1,\ldots, U_m$ is an $m$-partition of $G_{i+1}[W']$ and also note that $G_{i+1}[W']=H_i[W']=G_i[W']$. Let $E_v$ be the set of edges of $H_i[W]$ that
contains $v$. Let $E'_v=\{e\setminus \{v\}: e\in E_v\}$. Then $|E'_v|=|E_v|$.
By Lemma \ref{min-degree},
\begin{equation} \label{Ev-lower}
|E_v|=|E'_v|\geq (\cmr-c_1)\binom{N-1}{r-1}.
\end{equation}

\medskip

{\bf Claim 1.} $\forall e\in E_v, \forall j\in [m]$, we have $|e\cap U_j|\leq 1$.

\medskip

{\it Proof of Claim 1.}
First we show that $\forall e\in E_v$, $|e\cap U_1|\leq 1$. Suppose for
contradiction that there exists $e\in E_v$ with $|e\cap U_1|\geq 2$.
Let $a,b\in e\cap U_1$. Let $\cS$ be the collection of
all $(m-1)$-sets $S$ obtained by selecting one
vertex from  $U_\ell$ for each $\ell\in [m]\setminus \{1\}$. Then 
\begin{equation} \label{S-lower}
|\cS|\geq (N/2m)^{m-1}>2c_2N^{m-1},
\end{equation}
where the last inequality follows from \eqref{constant-choices}.
For each $S\in \cS$, note that $S\cup \{a\}$ and $S\cup \{b\}$ are both transversals
in $G_{i+1}[W]$ (relative to $A^{i+1}_1,\ldots, A^{i+1}_m$). By Lemma \ref{near-complete} there are at most $2c_2N^{m-1}$ noncomplete transversals in $G_{i+1}[W]$  containing either $a$ or $b$.  Hence, by \eqref{S-lower} there exists $S\in \cS$ such that 
$S_1=S\cup \{a\}$ and $S_2=S\cup \{b\}$ are complete transversals in $G_{i+1}[W]$.
That is, $S_1$ and $S_2$ both induce complete $r$-graphs in $G_{i+1}[W]$.
Since $S_1,S_2\subseteq W'$ and $G_i[W']=H_i[W']=G_{i+1}[W']$, $S_1$ and $S_2$
both induce complete $r$-graphs in $H_i[W]$ as well. By Lemma \ref{Kplus}, the union of these two complete
$r$-graphs plus $e$ contains a member of $\cH_{m+1}^F$ in $H_i[W]$,
a contradiction. Hence $\forall e\in E_v, |e\cap U_1|\leq 1$.

Next, let $j\in [m]\setminus \{1\}$. Suppose there exists $e\in E_v$ such that
$|e\cap U_j|\geq 2$. If $|U_1|\geq N/10m$, then we argue just like above
with the only difference being to replace \eqref{S-lower} with $|\cS|\geq (N/10m)^{m-1}>2c_2N^{m-1}$, which still holds by \eqref{constant-choices}.
Hence, we may assume that $|U_1|<N/10m$.

Since $A^{i+1}_1=U_1\cup (C_v\cap W)$ and $|A^{i+1}_1|\geq N/2m$ we have
$|C_v\cap W|\geq 0.4(N/m)$.  Let $a,b\in e\cap U_j$.
Recall that $u\in U_1$. Suppose first the number of
noncomplete transversals in $G_{i+1}[W]$ that contain both $u$ and $a$
is at least $3mc_2N^{m-2}$. Then since all of $C_v\cap W$ is symmetrized to $u$ in
forming $G_{i+1}$ from $H_i$
and $C_v\cap W$ and $u$ are both in $A^{i+1}_1$, the number of noncomplete
transversals in $G^{i+1}[W]$ that contain $a$ is at least 
$$3mc_2N^{m-2}|C_v\cap W|\geq 3mc_2\frac{0.4}{m} N^{m-1}>c_2N^{m-1},$$
contradicting Lemma \ref{near-complete}. Hence, the number of noncomplete transversals containing both $u$ and $a$ is at most $3mc_2N^{m-2}$.
Similarly the number of noncomplete transversals containing both $u$ and $b$
is at most $3mc_2 N^{m-2}$. Let $\cS$ be the collection of $(m-2)$-sets $S$
obtaining by selecting one vertex from $U_j$ for each $j\in [m]\setminus \{1,j\}$.
Then 
\begin{equation} \label{S-lower2}
|\cS|\geq (N/2m)^{m-2} > 6mc_2N^{m-2},
\end{equation}
where the last inequality follows from \eqref{constant-choices}.
For each $S\in \cS$, $S_1=S\cup \{u,a\}$ is a transversal in $G_{i+1}[W]$ containing
both $u$ and $a$ and $S_2=S\cup \{u,b\}$ is a  transversal in $G_{i+1}[W]$ containing both $u$ and $b$. By \eqref{S-lower2}, there exists $S\in \cS$ such that
both $S_1$ and $S_2$ are complete transversal in $G_{i+1}[W]$.
As before they both induce complete $r$-graphs in $H_i[W]$ as well.
Their union together with $e$ now contains a member of $\cH_{m+1}^F$, a contradiction. Hence $\forall e\in E_v, j\in [m], |e\cap U_j|\leq 1$. \qed

\medskip
By Claim 1, for all $f\in E'_v$ and for all $j\in [m]$, $|f\cap U_j|\leq 1$.
So each member $f$ of $E'_v$ intersects some $r-1$ parts among $U_1,\ldots, U_m$.
By an averaging argument, there exist some $r-1$ parts $U_{j_1},\ldots,
U_{j_{r-1}}$ such that at least $|E'_v|/\binom{m}{r-1}$ members of $E'_v$
intersect these $r-1$ parts and no other parts. Let $J=\{j_1,\ldots, j_{r-1}\}$.
Let 
\begin{equation} \label{EJ-lower}
E_J=\{f\in E'_v: \forall j\in J, f\cap U_j\neq \emptyset\}.
\end{equation}
By our discussion,
\begin{equation} \label{E*-lower}
|E_J|\geq |E'_v|/\binom{m}{r-1}\geq (\cmr-c_1)\binom{N-1}{r-1}/\binom{m}{r-1}
>\frac{1}{(2m)^r} N^{r-1},
\end{equation}
for sufficiently large $N\geq n_1$.
\medskip

Let $$I=\{i\in [m]: |\partial_1(E'_v)\cap U_i|\geq \frac{1}{(2m)^r} N\}.$$
By \eqref{E*-lower} and the definition of $I$, we have $J\subseteq I$.
First, suppose that $|I|\leq m-2$.
By our earlier discussion, $\forall i\in I\subseteq [m], |U_i|\leq \frac{N}{m}+c_2N$.
Also, for each $i\notin I$, the number of members of $E'_v$ intersecting $U_i$ is
trivially at most $\frac{1}{(2m)^r}N\cdot N^{r-2}=\frac{1}{(2m)^r}N^{r-1}$. Hence,
by \eqref{constant-choice1} (with room to spare),
$$|E'_v|\leq \binom{m-2}{r-1}\left(\frac{N}{m}+c_2N\right)^{r-1}+m\left[\frac{1}{(2m)^r}N^{r-1}\right]<
(\cmr-c_1)\binom{N-1}{r-1},$$
contradicting \eqref{Ev-lower}.
Hence $$|I|\geq m-1.$$

If $|I|=m-1$, then let $k\in [m]\setminus I$. If $|I|=m$, then let $k\in I\setminus J$.

\medskip

{\bf Claim 2.} We have $\forall e\in E_v$, $e\cap U_k=\emptyset$.

\medskip

{\it Proof of Claim 2.}
Suppose for
contradiction that there exists $e\in E_v$ that contains a vertex $y\in U_k$.
Let $\cT$ be the collection of $(m-r)$-sets $T$ obtained by selecting one vertex from
$\partial_1(E'_v)\cap U_\ell$ for each $\ell\in [m]\setminus (J\cup \{k\})\subseteq I$.
For each $T\in \cT$ and $f\in E_J$, $T'=T\cup f\cup \{y\}$ is a transversal in $G_{i+1}[W]$ containing $y$. The number of different $T'$ is at least
$$|E_J|\left[\frac{1}{(2m)^r}N\right]^{m-r}\geq  \frac{N^{r-1}}{(2m)^r}\cdot
\left[\frac{N}{(2m)^r}\right]^{m-r}=\frac{N^{r-1}}{(2m)^{mr}}> c_2N^{m-1},$$
where the last inequality follows from \eqref{constant-choices}. 
By Lemma \ref{near-complete}, the number
of noncomplete transversals in $G_{i+1}[W]$ containing $y$ is less than $c_2N^{m-1}$.
So there exist $T\in \cT, f\in E_J$ such that $T'=T\cup f\cup \{y\}$ is a complete transversal in $G_{i+1}[W]$. As before, $T'$ also induces a complete $r$-graph in $H_i[W]$. Now we can find a member of $\cH_{m+1}^F$ in $H_i[W]$ as follows. Let $C'=\{v\}\cup T'$. If $n(F)\leq m$, then we map $F$ into $T'$. If $n(F)=m+1$ and
$z$ is a degree $1$ vertex in $F$, then we map $F$ into $C'$ with $z$ mapped to $v$.
Such mappings exist since $v\cup f\in H_i[W]$ and $T'$ is complete in $H_i[W]$.
It remains  to check that all pairs in $C'$ are covered in  $H_i[W]$. Pairs not containing $v$ are covered since $H_i[T']$ is complete. Pairs of the form $\{v,a\}$ where $a\in f$ are covered by $\{v\}\cup f$. The pair $\{v,y\}$ is covered by $e$. The remaining pairs have the form $\{v, b\}$, where $b\in T$. By our definition of $T$, $b\in \partial_1(E'_v)$.
Hence there exist an edge in $E_v$ that contains $v$ and $b$. We have thus shown that
$H_i[W]$ contains a member of $\cH_{m+1}^F$.
This contradicts $H_i[W]$ being $\cH_{m+1}^F$-free. \qed

\medskip

We have thus far shown that each edge in $E_v$ intersects each $U_j$ in at most one vertex and intersects $U_k$ in no vertex.  Since all the vertices in $S_v\cap W$ behave the same as $v$ in $H_i[W]$, we see that $H_i[W]$ is $m$-partite with an $m$-partition
$U'_1,\ldots, U'_m$, where $U'_j=U_j$ for each $j\in [m]\setminus \{k\}$ and
$U'_k=U_k\cup (S_v\cap W)$.
This completes the induction and the proof of Theorem \ref{stability-general}.
\qed

\bigskip

Now, we can prove Theorem \ref{stability}, namely we show that if 
$F$ is an $r$-graph with $\pi_\lambda(F)<\frac{[m]_r}{m^r}$ such that
either $n(F)\leq m$ or $n(F)=m+1$ 
and $F$ contains a vertex of degree $1$, then $\cH_{m+1}^F$ is $m$-stable.

\medskip

{\bf Proof of Theorem \ref{stability}:}
Let $\ve>0$ be given. We may assume that $\ve$ is sufficiently small so that
$\ve<\gamma_0$, where $\gamma_0$ is given in Theorem \ref{stability-general}.
Let $\beta=\cmr-\pi_\lambda(F)$. Let $\gamma=\min\{\ve, \frac{\beta}{3r}\}$.
Let $\delta, n_0$ be the constants guaranteed by Theorem \ref{stability-general} for the above defined $\gamma$.
Let $\delta_1=\min\{\delta, \frac{\beta}{3}\}$. Let $n_1\geq n_0$ to be large enough so that for $n\geq n_1$ we have
$$(\cmr-\delta_1-\gamma r)\binom{n}{r}>(\cmr-\frac{2\beta}{3}) \binom{n}{r}>
\pi_\lambda(F) \frac{n^r}{r!}.$$
Let $G$ be an $\cH_{m+1}^F$-free graph of order $n\geq n_1$ and size
more than $(\frac{[m]_r}{m^r}-\delta_1) \binom{n}{r}$.
Let $G^*$ the final graph produced by applying Algorithm \ref{symm-clean} to $G$ with threshold $\gamma$. By Theorem \ref{stability-general}, $n(G^*)\geq (1-\gamma )n\geq (1-\ve)n$. 
Since $G^*$ is the final graph produced by Algorithm \ref{symm-clean},
if $S$ consists of one vertex from each equivalence class of $G^*$ then $G^*[S]$ covers pairs and $G^*$ is a blowup of $G^*[S]$.
If $|S|\leq m$, then $W=V(G^*)$ is the union of at most $m$ equivalence classes
of $G^*$. By Theorem \ref{stability-general}, $G[W]$ is $m$-partite. So $G$ can be made
$m$-partite by deleting at most $\ve n$ vertices and we are done.

We henceforth assume that $|S|\geq m+1$. If $F\subseteq G^*[S]$, then since
$G^*[S]$ covers pairs we can find a member of $\cH_{m+1}^F$ in $G^*[S]$
by using any $(m+1)$-set that contains a copy of $F$ as the core, contradicting $G^*$ being $\cH_{m+1}^F$-free. Hence $G^*[S]$ is $F$-free. In the producing $G^*$ from $G$, observe that each time we symmetrized, the number of edges does not decrease. 
Since at most $\gamma n$ vertices are deleted in the process, 
$$|G^*|>|G|-\gamma n \binom{n-1}{r-1}\geq (\cmr-\delta_1-\gamma r)\binom{n}{r}>\pi_\lambda(F) \frac{n^r}{r!},$$
contradicting Lemma \ref{blowup}.  \qed

%%%%%%%%%%%%%%%%%%%%%%%%%%%%%%%%%%%%%%%%%%%%%%%

%%%%%%%%%%%%%%%%%%%%%%%%%%%%%%%%%%%%%%%%%%%%%%%%%%%%%%%%%%
\section{Establishing Exactness from Stability}

In this section, we prove Theorem \ref{stable-to-exact}: Let $F$ be an $r$-graph such that either $n(F)\leq m$ or $n(F)=m+1$ and $F$
contains a vertex of degree $1$. We prove that if $\cH_{m+1}^F$ is $m$-stable then  $ex(n,H^F_{m+1})=|\Tr (n,m)|$
for sufficiently large $n$.

\medskip

{\bf Proof of Theorem \ref{stable-to-exact}:}
First we define a few constants.
Let 
\begin{equation} \label{constant3}
c_1=\frac{1}{(2m)^{m^3}}, \quad c_2=\frac{c_1}{2mr}, \quad c_3=\frac{c_1}{r}, \quad c_4=\frac{c_1}{2r\binom{m-1}{r-2}}, \quad c_5=(\frac{c_4}{2m})^{m^3}.
\end{equation}
Let  
\begin{equation} \label{epsilon-definition}
\ve=\min\{\frac{c_5}{2r}, \frac{c_1c_2}{2r^2}\}.
\end{equation}
Since $\cH_{m+1}^F$ is $m$-stable, by Definition \ref{stable},  there exist a real $\delta_1>0$ and a positive integer $n_1$ such that for all $n\geq n_1$ if $G$ is an $\cH_{m+1}^F$-free $r$-graph on $[n]$
with $|G|>(\cmr-\delta_1)\binom{n}{r}$ edges then $G$ can be made $m$-partite
by deleting at most $\ve n$ vertices. By further reducing $\delta_1$ if needed, we
may assume that $\delta_1\leq \ve$.
 Let $n_2$ be sufficiently large so that $n_2\geq n_1$ and that every $n\geq n_2$
satisfies various inequalities involving $n$ that we will specify throughout the proof.
Let $G$ now be a maximum $H^F_{m+1}$-free graph on $[n]$ with $n\geq n_2$.  Since $\Tr(n,m)$ is $H_{m+1}^F$-free, we have
\begin{equation}  \label{maximum}
|G|\geq |\Tr(n,m)|.
\end{equation}
Let $p=n(\cH_{m+1}^F)$.
By Lemma \ref{removal}, $G$ contains a subgraph $G'$ with
$|G'|\geq |G|-p\binom{n}{r-3}\binom{n}{2}$ such that $G'$ is
$\cH_{m+1}^F$-free.
For large enough $n\geq n_2$ we have $|G'|>(\cmr-\delta_1)\binom{n}{r}$.
Since $\cH^F_{m+1}$ is $m$-stable and $n(G')\geq n_1$,
$G'$ can be made $m$-partite by deleting at most $\ve n$ vertices. Hence, in particular, $G'$ contains an $m$-partite subgraph with at least 
$$|G'|-\ve n^r\geq |G|-p\binom{n}{r-3}\binom{n}{2} -\ve n^r\geq |G|-2\ve n^r$$ edges (assuming that $n_2$ is large enough).
Among all $m$-partitions of $[n]$, let $V_1\cup \ldots \cup V_m$ be an $m$-partition of $[n]$ that maximizes

\begin{equation} \label{phi-definition}
\phi=\sum_{e\in G}|\{i\in [m]: e\cap V_i\neq \emptyset\}|.
\end{equation}

Let $K$ be the complete $m$-partite $r$-graph on $[n]$ with parts $V_1, \ldots, V_m$. 
By the definition of $\phi$, we have $\phi\geq r|G\cap K|$.
By the choice of $K$, we have $|G\cap K|\geq |G|-2\ve n^r$ and thus $\phi\geq r(|G|-2\ve n^r)$. On the other hand, $\phi\leq r|G|-|G\setminus K|$,
since each edge of $G\setminus K$ contributes at most $r-1$ to $\phi$.
It follows that
\begin{equation} \label{bad-bound}
|G\setminus K|\leq 2r\ve n^r.
\end{equation}

We call an edge $e$ on $[n]$ {\it crossing} if it contains at most vertex of each $V_i$, i.e. if $e\in K$. Let 
$$M=K\setminus G  \quad \mbox{ and } \quad B=G\setminus K.$$
We call edges in $M$ {\it missing edges}. We call edges in $B$ {\it bad edges}. 
Since $|G|\geq |\Tr(n,m)|\geq |K|$,
we have $|B|\geq |M|$. By \eqref{bad-bound}, we then have
\begin{equation} \label{M-B}
|M|\leq |B|\leq 2r\ve n^r.
\end{equation}

Our goal for the rest of the proof is to show that in fact $B=\emptyset$, from which we would have $|G|\leq |K|\leq |\Tr(n,m)|$, which would complete  our proof.
For the rest of the proof,  we suppose $B\neq \emptyset$ and will derive a contradiction.

First, note that 
\begin{equation}
|K|\geq |K\cap G|=|G|-|G\setminus K|\geq |\Tr(n,m)|-2r\ve n^r,
\end{equation}
for sufficiently large $n$.
For sufficiently large $n$, this implies 

\begin{equation} \label{part-size}
\forall i \in [m], \quad 0.9\frac{n}{m}\leq |V_i|\leq 1.1\frac{n}{m}.
\end{equation}

Let $q=\binom{m+1}{2}+r$. Let $K^r_m(q)$ denote the complete $m$-partite
$r$-graph with $q$ vertices in each part. Let $A_1,\ldots, A_m$ denote the $m$ parts.  

\medskip

{\bf Claim 3.} If $u,v$ are two vertices in some part of a copy $L$ of $K^r_m(q)$ in $G$. Then $d_G(\{u,v\})=0$.

\medskip

{\it Proof of Claim 3.}  Without loss of generality, suppose $u,v$ lie in $A_1$. 
Suppose for contradiction that $u,v$ lie in some edge $e$ of $G$.
Let $C$ denote the core of $H^F_{m+1}$. By our assumption about $F$,
there exists $z\in C$ such that $z$ lies in $0$ or $1$ edge of $F$. Since $m+1\geq r$,
there exists $y\in C\setminus \{z\}$ such that $d_F(\{y,z\})=0$.
We can obtain a copy $H^F_{m+1}$ in $G$ by mapping $y,z$ to $u,v$, respectively, and the other vertices of $C$ into $A_2,\ldots, A_m$, one into each part. It remains to cover the pairs in $C$ that are uncovered by $F$. The pair $\{x,y\}$ is covered by $e$.
Since each part of $L$ still has at least $\binom{m+1}{2}$ vertices outside $e$, it is easy to cover
all such pairs so that the covering edges are pairwise disjoint outside $C$.
This contradicts $G$ being $H^F_{m+1}$-free. \qed

\medskip

{\bf Claim 4.} Let $e\in B$ and suppose $|e\cap V_i|\geq 2$. Let $u,v\in e\cap V_i$. Then either $d_M(u)\geq c_1 n^{r-1}$ or
$d_M(v)\geq c_1 n^{r-1}$.

\medskip

{\it Proof of Claim 4.} Without loss of generality, suppose $i=1$. 
Let $S$ be a set of $mq$ vertices obtained by selecting $u, v$, $q-2$ vertices from $V_1\setminus e$ and $q$ vertices from each of $V_2\setminus e,\ldots, V_m\setminus e$. By Claim 3, $K[S]\not\subseteq G[S]$.
Hence, for each such $S$, $\exists f\in K[S]\setminus G[S]\subseteq M$. There are at least
$(n/2m)^{mq-2}$ choices of $S$. Suppose first that for at least half of the choices of $S$, the corresponding $f$  is disjoint from $\{u,v\}$. Each fixed $f\in M$, 
there are at most $n^{mq-2-r}$ choices of $S$ 
for which $f\in K[S]\setminus G[S]$. Hence, using the definition of $c_1$ in \eqref{constant3} and the definition of $\ve$ in \eqref{epsilon-definition}, we have
$$|M|\geq (1/2)(n/2m)^{mq-2}/n^{mq-2-r}\geq c_1 n^r>2r\ve n^r,$$
 contradicting (\ref{M-B}). 
Next, suppose  for at least half of the choices of $S$,
$K[S]\setminus G[S]$, $f$ intersects $\{u,v\}$.
Without loss of generality, suppose for at least $1/4$ of the choices of $S$, $f$ contains $u$. Since each such $f$ is contained in at most $n^{mq-1-r}$ many $S$, there are at least
$$(1/4)(n/2m)^{mq-2}/n^{mq-1-r}\geq c_1n^{r-1},$$
edges of $M$ that contains $u$.
\qed

\medskip
Let
$$W=\{w: d_M(w)\geq c_1n^{r-1}\}.$$
By the definition of $W$ and \eqref{M-B}, we have
$$c_1n^{r-1}|W|\leq \sum_{x\in [n]} d_M(x)=r|M|\leq 2r^2\ve n^r.$$
Hence
\begin{equation} \label{W-upper}
|W|\leq \frac{2r^2\ve}{c_1} n <c_2 n,
\end{equation}
where the last inequality follows from \eqref{constant3} and \eqref{epsilon-definition}.
We call a  pair $\{u,v\}$ of vertices  in $G$  a {\it bad pair} if  $\exists \, e\in B, i\in [m]$ such that $u,v\in e\cap V_i$. By Claim 4, each bad pair must contain an element of $W$. Hence, by \eqref{W-upper}, we have

\medskip

{\bf Claim 5.} The number of bad pairs in $G$ is at most $c_2 n^2$.

\medskip

We call vertices in $W$ {\it defect vertices}.
By Claim 4, each $e\in B$ contains a defect vertex in a part $V_i$ where $|e\cap V_i|\geq 2$. We pick such a defect vertex, denote it by $c(e)$ and call it the 
{\it center} of $e$. (The choices for $c(e)$ may not be
unique, but we will fix one.)
For each defect vertex $w$, let $b(w)$ denote the number of edges $e\in B$ such
that $c(e)=w$. By our discussion above and \eqref{M-B}, we have

$$\sum_{w\in W} b(w)\geq |B|\geq |M|=\frac{1}{r}\sum_{x\in [n]} d_M(x)\geq \frac{1}{r}\sum_{w\in W} d_M(w).$$

Hence there exists $w_0\in W$ such that
\begin{equation} \label{bad-vertex}
b(w_0)\geq \frac{1}{r} d_M(w_0)\geq \frac{c_1}{r} n^{r-1}=c_3 n^{r-1}.
\end{equation}
Without loss of generality, may assume that $w_0\in V_1$. 
Let 
$$L=\{e\setminus w_0: e\in B, c(e)=w_0\}.$$

By definition, any $e\in B$ with $c(e)=w_0$ contains
at least one other vertex of $V_1$. Hence
\begin{equation} \label{v1-intersection}
\forall f\in L, f\cap V_1\neq \emptyset.
\end{equation}

By Claim 5, for each $i\in [m]$, the number of members of $L$ that contain two or more vertices of $V_i$ is  at most $c_2n^2\cdot n^{r-3}=c_2n^{r-1}$. Hence, using the defintion of the constants given in \eqref{constant3}, the number of members of $L$ that contains at most
one vertex from each $V_i$ is at least 
$$c_3n^{r-1}-mc_2n^{r-1}=(\frac{c_1}{r}-m\frac{c_1}{2mr})n^{r-1}=\frac{c_1}{2r}n^{r-1}=c_4\binom{m-1}{r-2}n^{r-1}.$$

Each such member of $L$ contains a vertex of $V_1$ by \eqref{v1-intersection}.
By the pigeonhole principle, for some collection of $r-2$ parts
outside $V_1$, without loss of generality, say $V_2,\ldots, V_{r-1}$, there are at least $c_4n^{r-1}$ members
of $L$ that contain exactly one vertex of each of $V_1, V_2,\ldots, V_{r-1}$.
Let $L'$ denote the collection of these members. Then $L'$ is an  $(r-1)$-partite $(r-1)$-graph with an $(r-1)$-partition $U_1,\ldots, U_{r-1}$, where $\forall i\in [r-1],
U_i=V(L')\cap V_i$, and 
\begin{equation} \label{L-prime-lower}
|L'|\geq c_4n^{r-1}.
\end{equation}

Recall that $p=n(H_{m+1}^F)$. Assuming $n$ is sufficiently large,
by Lemma \ref{kernel-degree}, $L'$ has a subgraph $L''$ with 
\begin{equation} \label{L-prime-prime-lower}
|L''|\geq |L'|-p n\binom{n}{r-3}\geq \frac{1}{2}c_4n^{r-1}
\end{equation}
such that for each vertex $x$ with $d_{L''}(x)>0$ we have  $d^*_{L''}(x)>p$.
Let us remove isolated vertices from $L''$. Then the condition implies
that for 

\medskip
\begin{equation} \label{L''-kernel}
\forall x\in V(L''), d^*_{G}(\{w_0,x\})\geq p+1.
\end{equation}

For each $i=r,\ldots, m$, let $$D_i=\left\{x\in V_i: d_G(\{x,w_0\})>p{n-3\choose r-3}\right\}.$$

\medskip

By the definition of $D_i$ and Lemma \ref{matching}, we have
\begin{equation} \label{Di-kernel}
\forall i=r+1,\ldots, m,  \forall x\in D_i, d^*_{G}(\{w_0,x\})\geq p+1.
\end{equation}

{\bf Claim 8.} For each $i=r,\ldots, m$, $|D_i|\geq \frac{1}{2}c_4n$.

\medskip

{\it Proof of Claim 8.} Suppose for contradiction that for some $i$, $|D_i|<\frac{1}{2}c_4n$. Without loss of generality,
suppose that $|D_m|<\frac{1}{2}c_4n$. Let us consider a different $m$-partition of $V(G)$ by
moving $w_0$ from $V_1$ to $V_m$. Let us consider the change to the value of $\phi$,
defined in (\ref{phi-definition}). The only edges $e$ of $G$ whose contribution to $\phi$ are decreased by the move are those satisfying $e\cap V_1=\{w_0\}$ and $e\cap V_m
\neq \emptyset$. The decrease is $1$ per edge. 
We can bound the number of such edges as follows.
The number of edges of $G$ containing $w_0$ and a vertex of $D_m$ is at most $|D_m|\cdot n^{r-2}<\frac{1}{2}c_4n^{r-1}$.
For each vertex $x$ of $V_m\setminus D_m$ we have
$d_G(\{x,w_0\})<p{n-1\choose r-3}$). Hence the number of edges of $G$ containing
$w_0$ and a vertex of $V_m\setminus D_m$ is at most
 $n\cdot p{n-1\choose r-3}<\frac{1}{4}c_4n^{r-1}$, for sufficiently large $n$. Hence there are fewer than $\frac{1}{2}c_4n^{r-1}+\frac{1}{4}c_4n^{r-1}<c_4n^{r-1}$ such edges.

On the other hand, the contribution to $\phi$ from each in $\{w_0\cup f: f\in L'\}$
is increased by $1$ by moving $w_0$ to $V_m$. By \eqref{L-prime-lower}, 
$|L'|\geq c_4n^{r-1}$.
Hence $V_1\setminus, V_2,\ldots, V_{m-1}, V_{m+1}\cup w_0$ is a partition that has a higher $\phi$-value than $V_1,\ldots, V_m$, contradicting our choice of $V_1,\ldots, V_m$. 
\qed

\medskip

We are now ready to complete the proof of the theorem. 
Let $\cS$ be the collection of all $mq$-sets $S$ obtained by picking the vertex set of an edge $f$ of $L''$, and then picking $q-1$ vertices from $V_i\setminus f$, for each $i\in [r-1]$, then picking $q$ vertices from each of $D_r,\ldots, D_m$ 
By \eqref{constant3}, \eqref{part-size}, \eqref{L-prime-prime-lower}, and Claim 8,
$$|\cS|\geq  \frac{1}{2}c_4n^{r-1}\cdot \left (\frac{n}{2m}\right)^{(r-1)(q-1)}\cdot
\left(\frac{1}{2}c_4n\right)^{(m-r+1)q}\geq c_5n^{mq}.$$

\medskip

{\bf Claim 9.}  For each $S\in \cS$, $K[S]\not\subseteq G[S]$.

\medskip

{\it Proof of Claim 9.}  Suppose for contradiction that $K[S]\subseteq G[S]$. 
Let $C$ denote the core of $H^F_{m+1}$. By our assumption, $C$ contains
a vertex $z$ that lies in $0$ or $1$ edge of $F$.
Let $f$ be a member of $L''$ contained in $S$. Let $S'\subseteq S$ contain
$V(f)$ and one vertex from each of $V_r,\ldots, V_m$. By our assumption,
$G[S']$ is complete. Thus $G[S'\cup w_0]$ contains a copy $F'$ of $F$, where $w_0$ plays the role of $z$ and  if $z$ has degree $1$ in $F$ then $w_0\cup f$ plays the role of the unique edge of $F$ containing $z$. With $C=S'\cup \{w_0\}$, we can obtain a copy of $H_{m+1}^F$ in $G$ as follows. It suffices to cover the
pairs $\{a,b\}$ in $C$ that are uncovered by $F'$ using edges that intersect $C$ only in
$a,b$ and are pairwise disjoint outside $C$. Let $\{a,b\}$ be such pair. If $a,b\neq w_0$,
then we can use an edge in $G[S]\supseteq K[S]$ to cover $\{a,b\}$ like in the proof of Claim 3. To cover a pair of the form $\{w_0,a\}$, we use  \eqref{L''-kernel} if $a\in V(f)$ or \eqref{Di-kernel} if $a\in S'\setminus V(f)$. Hence $H_{m+1}^F\subseteq G$, a contradiction.
\qed

\medskip

Now, for each $S\in \cS$, by Claim 9, $K[S]$ contains a member of $K\setminus G$. On the other hand,  each member of $K\setminus G$ trivially is contained in at most $n^{mq-r}$ different $S$. Hence, $$|K\setminus G|\geq |\cS|/n^{mq-r}\geq c_5n^{mq}/n^{mq-r} =c_5n^r\geq 2r\ve n^r,$$ contradicting (\ref{M-B}). The contradiction completes our proof.
\qed

%%%%%%%%%%%%%%%%%%%%%%%%%%%%%%%%%%%%%%%%%%%%%%%%%%%%%

\section{Stability of expanded cliques with embedded enlarged trees}

\medskip

In this section, we prove Theorem \ref{enlarged-tree-stable}. The main work in proving
Theorem \ref{enlarged-tree-stable} is to establish stability of the Lagrangians of
trees, which may be of independent interest.
Given an $r$-graph $G$ on $[t]$ and variables $\tx=(x_1,\ldots, x_t)$, recall that $$p_G(\tx)=r!\cdot\sum_{e\in G} \prod_{i\in e} x_i,$$
and that $\lambda(G)$ is the maximum $p_G(\tx)$ over all $1$-sum weight assignments $\tx$.
For each $i\in [t]$, let $\lambda_i=\frac{\partial (p_G(\tx))}{\partial (x_i)}$. Then it is straightforward to verify that

\begin{equation} \label{weighted-sum}
\lambda_i=r!\cdot \sum_{f\in \cL_G(i)}\prod_{j\in f} x_j\quad\quad\mbox{and}\quad \quad
p_G(\tx)=\frac{1}{r}\sum_{i=1}^n  \lambda_i x_i.
\end{equation}
By \eqref{weighted-sum}, we have
\begin{equation} \label{pg-lambda}
p_G(\tx)\leq \frac{1}{r}\max_i \lambda_i \quad \mbox{ and thus} \quad
\max_i \lambda_i \geq r\cdot p_G(\tx).
\end{equation}

The following lemma will be useful for our analysis.

\begin{lemma} \label{leader}
Let $\eta>0$ be a real.
Let $G$ be an $r$-graph on $[t]$ 
and $\tx=(x_1,\ldots, x_t)$ a $1$-sum weight assignment on $G$
with $p_G(x)\geq \lambda(G)-\eta$. Then there exists
$i\in [t]$ such that $x_i\geq \max_{j\in [t]} x_j -2r!\sqrt{\eta}$ and $\lambda_i\geq \max_{j\in[t]} \lambda_j-2r!\sqrt{\eta}$.
\end{lemma}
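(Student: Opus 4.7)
The plan is to argue by contradiction using a weight-shifting perturbation. Let $M=\max_j x_j$ and $\Lambda=\max_j \lambda_j$, attained at vertices $j_0,i_0\in[t]$, respectively; if $i_0=j_0$, the conclusion holds with $i=i_0$, so assume $i_0\neq j_0$. If no $i$ satisfies both inequalities simultaneously, then applying the negated conclusion to $i_0$ and to $j_0$ gives the two slack estimates $\lambda_{i_0}-\lambda_{j_0}>2r!\sqrt{\eta}$ and $x_{j_0}-x_{i_0}>2r!\sqrt{\eta}$, and in particular $x_{j_0}>2r!\sqrt{\eta}$ since $x_{i_0}\geq 0$.

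Next I would shift mass from $j_0$ to $i_0$: let $e_i$ denote the standard basis vector, and set $g(t)=p_G(\tilde{x}+t(e_{i_0}-e_{j_0}))$ for $t\in[0,x_{j_0}]$. Every such $t$ yields a $1$-sum nonnegative assignment, so $g(t)\leq\lambda(G)\leq g(0)+\eta$ throughout. Since $p_G$ is multilinear, $g$ is a quadratic polynomial in $t$ with $g'(0)=\lambda_{i_0}-\lambda_{j_0}>2r!\sqrt{\eta}$ by \eqref{weighted-sum}, and
\[
g''(0)=-2r!\sum_{e\ni i_0,j_0}\prod_{k\in e\setminus\{i_0,j_0\}}x_k\leq 0.
\]
Setting $c=-g''(0)$, the bound $\sum_{e\ni i_0,j_0}\prod_{k\in e\setminus\{i_0,j_0\}}x_k\leq \tfrac{1}{(r-2)!}\left(\sum_k x_k\right)^{r-2}=\tfrac{1}{(r-2)!}$ gives $c\leq 2r(r-1)$, and $g$ is concave on $[0,x_{j_0}]$.

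The crux is to produce some $t^\circ\in[0,x_{j_0}]$ with $g(t^\circ)-g(0)>\eta$, contradicting $g(t^\circ)\leq g(0)+\eta$. When $c=0$, take $t^\circ=x_{j_0}$ and use $x_{j_0} g'(0)>4(r!)^2\eta>\eta$. When $c>0$, the unconstrained maximizer of the concave quadratic is $t^*=g'(0)/c$. If $t^*\leq x_{j_0}$, the peak gain is $(g'(0))^2/(2c)>\frac{4(r!)^2\eta}{4r(r-1)}=\frac{(r!)^2}{r(r-1)}\eta>\eta$, which holds for every $r\geq 2$. If instead $t^*>x_{j_0}$, concavity yields $g'(0)-c\,x_{j_0}/2>g'(0)/2>r!\sqrt{\eta}$, hence $g(x_{j_0})-g(0)>x_{j_0}\cdot r!\sqrt{\eta}>2(r!)^2\eta>\eta$; in both subcases we reach a contradiction.

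The principal obstacle is that a naive linear shift of size $\sim r!\sqrt{\eta}$ is swamped by the quadratic correction, since $|g''(0)|$ can be of order $r^2$. The remedy is to optimize the step size and split into the two cases above, exploiting whichever of the two slacks $g'(0)$ and $x_{j_0}$ is the binding one.
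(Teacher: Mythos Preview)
Your proof is correct and follows essentially the same weight-shifting idea as the paper: transfer mass from the vertex of maximum weight to the vertex of maximum $\lambda_i$ and show the resulting gain exceeds $\eta$. The paper simply fixes the shift at $d=\tfrac{1}{2r!}(\lambda_b-\lambda_a)$ and computes the change directly, whereas you parameterize the shift and optimize with a case split; your version has the minor advantage of explicitly respecting the nonnegativity constraint $t\le x_{j_0}$, which the paper's choice of $d$ does not obviously guarantee.
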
 
\begin{proof}
Suppose $x_a=\max_j x_j$ and $\lambda_b=\max_j \lambda_j$.
If $a=b$ then the claim holds  with $i=a=b$. So assume $a\neq b$.
If $\lambda_a>\lambda_b-2r!\sqrt{\eta}$, then the claim holds with $i=a$.
Similarly, if $x_b>x_a-2r!\sqrt{\eta}$, then the claim holds with $i=b$.
So we may assume that  $\lambda_a<\lambda_b-2r!\sqrt{\eta}$ and
$x_b<x_a-r!\sqrt{\eta}$. That is, $x_a-x_b>2r!\sqrt{\eta}$ and
$\lambda_b-\lambda_a>2r!\sqrt{\eta}$.

 Let $w_a=\sum_{e\in \cL(a)\setminus \cL(b)}\prod_{i\in e} x_i$,
 $w_b=\sum_{e\in \cL(b)\setminus \cL(a)}\prod_{i\in e} x_i$, and $w^*=\sum_{e\in \cL(\{a,b\})}\prod_{i\in e} x_i$. It is easy to see that $0\leq w_a,w_b,w^*\leq 1$.
Note that $\lambda_a=r!(w_a+x_bw^*)$ and $\lambda_b=r!(w_b+x_aw^*)$. Hence,
$\lambda_b-\lambda_a=r![w_b-w_a+(x_a-x_b)w^*]$.
Let $d=\frac{1}{2r!}(\lambda_b-\lambda_a)$.
Consider a new weight assignment $\ty=(y_1,\ldots, y_n)$ defined by letting
$y_a=x_a-d, y_b=x_b+d$ and $\forall i \in [t]\setminus \{a,b\}\, 
y_i=x_i$. Then
\begin{eqnarray*}
p_G(\ty)-p_G(\tx)&=&r![(x_a-d)(x_b+d)-x_ax_b)w^*-dw_a+dw_b].\\
&=&r![d((x_a-x_b)w^*+(w_b-w_a)]-d^2w^*]\\
&\geq& r!d[((x_a-x_b)w^*+(w_b-w_a)]-d]\\
&=&d[\lambda_b-\lambda_a-r!d]=\frac{1}{2}d(\lambda_b-\lambda_a)
=\frac{1}{4r!}(\lambda_b-\lambda_a)^2>\eta,
\end{eqnarray*}
contradicting our assumption that $p_G(\tx)\geq \lambda(G)-\eta$.
\end{proof}

Given $0<\beta\leq 1$ and an $r$-graph $G$ on $[t]$, let 
$$\lambda_\beta(G)=\max \{p_G(x_1,\ldots, x_t):  \forall i\in [t] \, x_i\geq 0, \sum_i x_i=1,
\max_i x_i=\beta\}.$$
Clearly, $$\lambda(G)=\max_{0<\beta\leq 1} \lambda_\beta(G).$$
The following lemma  played a crucial role in Sidorenko's arguements.

\begin{lemma}{\rm (\cite{sidorenko} Lemma 3.3)} \label{local}
Let $k,r\geq 2$ be integers, where $k\geq M_{r-1}$. Let $0<\beta\leq 1$ be a real.
Let $T$ be a $k$-vertex tree that satisfies the Erd\H{o}s-S\'os conjecture.
Let $T$ be the $(r-2)$-fold enlargement of $T$.
If $G$ is an $F$-free $r$-graph  then $\lambda_\beta(G)\leq (k-2)f_r(z)$, where $z=\max\{\frac{1}{\beta}-r+3, k\}$.
\end{lemma}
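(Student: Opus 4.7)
The plan is to induct on $r$, with the base case $r=2$ reducing to the Erdős--Sós estimate and the inductive step exploiting a link-graph decomposition.

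For the base case $r=2$, we have $F=T$ and $f_2(x) = 1/(x-1)$. Given a $T$-free graph $G$ and a $1$-sum weight assignment $\tx$ with $\max x_i = \beta$, first reduce to the support of $\tx$; this induced subgraph is still $T$-free, so by Erdős--Sós it has at most $s(k-2)/2$ edges, where $s$ is the support size. A weighted-degree argument, splitting according to whether $\beta \geq 1/(k-1)$ or not, yields $p_G(\tx) \leq \max\{(k-2)\beta,\, (k-2)/(k-1)\}$, which matches $(k-2)f_2(z)$ in the two regimes $z = 1/\beta+1$ and $z = k$ respectively.

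For the inductive step $r\geq 3$, the key structural fact is: if $G$ is $F$-free, then for every vertex $v$, the link $\cL_G(v)$ is $F'$-free, where $F'$ is the $(r-3)$-fold enlargement of $T$. Indeed, any copy of $F'\subseteq \cL_G(v)$ would yield a copy of $F\subseteq G$ by adding $v$ to each of its edges. Given a $1$-sum weight $\tx$ with $x_v=\beta$ maximum, let $\ty$ be the renormalization of $\tx$ on $V(G)\setminus\{v\}$ (a $1$-sum vector). Separating the edges of $G$ containing $v$ from those not containing $v$ produces the identity
\[
p_G(\tx) = r\beta(1-\beta)^{r-1}\, p_{\cL_G(v)}(\ty) + (1-\beta)^r\, p_{G-v}(\ty).
\]
The first term is bounded by the inductive hypothesis applied to $\cL_G(v)$, an $(r-1)$-graph that is $F'$-free (and $k\geq M_{r-1}\geq M_{r-2}$). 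The second term can be handled either by iterating the decomposition at the maximum-weighted vertex of $G-v$ (which remains $F$-free), or by inducting on $n(G)$ as a secondary parameter.

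The main obstacle will be the algebraic step combining these two contributions into the claimed form $(k-2)f_r(z)$. The function $f_r$ is designed so that a clean recursion $f_r(x)/f_{r-1}(x) = ((x+r-4)/(x+r-3))^{r-1}$ holds, which is exactly what facilitates the telescoping. The hypothesis $k\geq M_{r-1}$ ensures $f_{r-1}$ is non-increasing for $x\geq k$, so that the inductive bound behaves monotonically as $\beta$ varies. The proof then concludes with a one-variable optimization over $\beta$: by this monotonicity the extremum is attained at the boundary determined by $z = \max\{1/\beta - r + 3,\, k\}$, yielding the stated bound $\lambda_\beta(G) \leq (k-2)f_r(z)$.
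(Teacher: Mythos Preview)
The paper does not give its own proof of this lemma; it is quoted from Sidorenko \cite{sidorenko} as a known result and used as a black box. So there is no in-paper argument to compare against.

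On your outline itself: the architecture (induction on $r$, base case via Erd\H{o}s--S\'os, inductive step via the link of the heaviest vertex) is indeed Sidorenko's, and your decomposition identity
\[
p_G(\tx) = r\beta(1-\beta)^{r-1}\, p_{\cL_G(v)}(\ty) + (1-\beta)^r\, p_{G-v}(\ty)
\]
is correct. But the crucial step is left as an assertion. After bounding the first summand by induction on $r$ and the second by induction on $n(G)$, you get an expression involving $f_{r-1}(z'')$ and $f_r(z')$, where $z''$ and $z'$ depend on the maximum coordinate of $\ty$ on $V(\cL_G(v))$ and on $V(G-v)$ respectively, not on $\beta$. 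You do not show why these combine to at most $(k-2)f_r(z)$ with $z=\max\{1/\beta-r+3,\,k\}$; citing the recursion $f_r/f_{r-1}=((x+r-4)/(x+r-3))^{r-1}$ is not enough, since the two pieces are evaluated at different arguments. Your closing sentence about ``a one-variable optimization over $\beta$'' is also off: in the statement $\beta$ is \emph{fixed} (you are bounding $\lambda_\beta(G)$, not $\lambda(G)$), so there is no optimization over $\beta$ to perform at the end.

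One small slip in the base case: you write $p_G(\tx)\le \max\{(k-2)\beta,\,(k-2)/(k-1)\}$, but what is needed (and what your two separate bounds actually give) is the \emph{minimum} of the two, since $(k-2)f_2(z)=(k-2)/(z-1)$ with $z-1=\max\{1/\beta,\,k-1\}$.
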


Let us also mention a useful fact about the function $f_r(x)$, namely
\begin{equation} \label{f-fact}
\frac{f_r(x)}{f_{r-1}(x)}=\left(\frac{x+r-4}{x+r-3}\right)^{r-1}.
\end{equation}

Also, let us recall the well-known fact that $\pi_\lambda(K_k)=\pi(K_k)=\frac{k-2}{k-1}$
(see \cite{ES} and \cite{MS} for instance).
\begin{lemma}\label{representing-vertex}
Let $k\geq 3$.
Let $T$ be a $k$-vertex tree that satisfies the Erd\H{o}s-S\'os conjecture and $F$ the $(r-2)$-enlargement of $T$, where $r\geq 2$ and $k\geq M_r$.
For every real $\alpha>0$, there exists a real $\gamma=\gamma(\alpha)>0$ such that
if $G$ is an $F$-free $r$-graph on $[t]$ and $\tx=(x_1,\ldots, x_t)$ a $1$-sum
weight assignment on $G$ with $p_G(\tx)\geq (k-2)f_r(k)-\gamma$ then
there exists $i\in [t]$ such that 
\begin{enumerate}
\item $|x_i-\frac{1}{k+r-3}|<\a$.
\item $|\sum_{j\in V(\cL_G(i))} x_j-\frac{k+r-4}{k+r-3}|<\a$.
\item $\lambda_i>r(k-2) f_r(k)-\alpha$.
\end{enumerate}
\end{lemma}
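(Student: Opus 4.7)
The plan is to apply Lemma \ref{leader} to locate a vertex $i$ whose weight $x_i$ and partial derivative $\lambda_i$ are both nearly maximal, and then verify the three conditions in turn. By Lemma \ref{leader} with $\eta=\gamma$ there is some $i\in[t]$ with $x_i\geq \max_j x_j-2r!\sqrt{\gamma}$ and $\lambda_i\geq \max_j \lambda_j-2r!\sqrt{\gamma}$. Combining \eqref{pg-lambda} with the hypothesis $p_G(\tx)\geq (k-2)f_r(k)-\gamma$ gives $\max_j\lambda_j\geq rp_G(\tx)\geq r(k-2)f_r(k)-r\gamma$, so
\[
\lambda_i \;\geq\; r(k-2)f_r(k)-r\gamma-2r!\sqrt{\gamma},
\]
which is at least $r(k-2)f_r(k)-\alpha$ once $\gamma$ is chosen small enough; this is item 3.

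For the upper bound in item 1, I would exploit the structure of the link $\cL_G(i)$. The key observation is that $\cL_G(i)$ must be $F'$-free, where $F'$ is the $(r-3)$-fold enlargement of $T$: any copy of $F'$ in $\cL_G(i)$ on vertices different from $i$ yields a copy of $F$ in $G$ by adjoining $i$ to each edge (the new $(r-2)$-set being the old $(r-3)$-set together with $i$, which is still disjoint from the copy of $V(T)$). Since $k\geq M_r\geq M_{r-1}$ and $T$ satisfies Erd\H{o}s--S\'os, Theorem \ref{sid-main} applied in the $(r-1)$-uniform setting gives $\lambda(\cL_G(i))\leq (k-2)f_{r-1}(k)$. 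Writing $\sigma_i=\sum_{j\in V(\cL_G(i))} x_j$ and normalizing the restricted weights $y_j=x_j/\sigma_i$, a direct computation from \eqref{weighted-sum} gives
\[
\lambda_i \;=\; r\sigma_i^{r-1}\, p_{\cL_G(i)}(\ty)\;\leq\; r\sigma_i^{r-1}(k-2)f_{r-1}(k),
\]
and the case $r=2$ is handled separately via the trivial identity $\lambda_i=2\sigma_i$. Combining with the lower bound on $\lambda_i$ and applying \eqref{f-fact} yields $\sigma_i\geq \frac{k+r-4}{k+r-3}-O(\sqrt{\gamma})$, and hence $x_i\leq 1-\sigma_i\leq \frac{1}{k+r-3}+O(\sqrt{\gamma})$.

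The matching lower bound $x_i\geq \frac{1}{k+r-3}-\alpha$ I would prove by contradiction. If $x_i<\frac{1}{k+r-3}-\alpha$, then $\beta:=\max_j x_j\leq x_i+2r!\sqrt{\gamma}$ is strictly below $\frac{1}{k+r-3}$ for $\gamma$ sufficiently small, so Lemma \ref{local} applies with $z=\frac{1}{\beta}-r+3$ exceeding $k$ by an amount $\delta(\alpha)>0$ and gives $p_G(\tx)\leq (k-2)f_r(z)$. Since $M_r$ is the \emph{rightmost} maximum of $f_r$ on $[2,\infty)$ and $k\geq M_r$, $f_r$ is strictly decreasing just to the right of $k$, so $f_r(z)\leq f_r(k)-\delta'(\alpha)$ for some $\delta'(\alpha)>0$, contradicting the hypothesis provided $\gamma<(k-2)\delta'(\alpha)$. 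Item 2 then follows immediately: the lower bound $\sigma_i\geq \frac{k+r-4}{k+r-3}-O(\sqrt{\gamma})$ is already in hand, and $\sigma_i\leq 1-x_i\leq \frac{k+r-4}{k+r-3}+O(\alpha)$.

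The main technical obstacle is producing a quantitatively usable constant $\delta'(\alpha)>0$ in the contradiction step, which relies on a genuine quantitative right-neighborhood strict decrease of $f_r$ past $M_r$; this is routine calculus but really does use that $M_r$ is the \emph{last} local maximum of $f_r$. The remaining work is bookkeeping, choosing $\gamma$ much smaller than $\alpha^2$ (and also smaller than $(k-2)\delta'(\alpha)$) so that all the $O(\sqrt{\gamma})$ error terms collapse into the single quantifier $\gamma=\gamma(\alpha)$ required by the statement.
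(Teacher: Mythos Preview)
Your proposal is correct and follows essentially the same route as the paper: both arguments combine Lemma~\ref{leader}, the bound of Lemma~\ref{local} to control $\beta=\max_j x_j$ via the strict decrease of $f_r$ past $k\geq M_r$, and Theorem~\ref{sid-main} applied to the $F'$-free link $\cL_G(i)$ together with \eqref{f-fact} to get the lower bound on $\sigma_i$. The only cosmetic difference is ordering: the paper first fixes a small $d>0$, bounds $\beta\geq \frac{1}{k+r-3+d}$ directly from Lemma~\ref{local}, and only then invokes Lemma~\ref{leader}, whereas you apply Lemma~\ref{leader} first and recover the lower bound on $x_i$ by the same contradiction afterwards.
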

\begin{proof}
Let $\a>0$ be given. Choose a small enough real $d>0$ such that
$\frac{1}{k+r-3+d}>\frac{1}{k+r-3}-\frac{\a}{2}$.
Since $k>M_r$ and $M_r$ is the right most local max
of $f_r(x)$, $f_r(x)$ is strictly decreasing on $[k,\infty)$.
Choose $\gamma>0$ to be small enough so that
\begin{equation} \label{gamma-choices}
\gamma<f_r(k)-f_r(k+d), \quad 3r!\sqrt{\gamma}<\frac{\a}{2},\quad
\mbox{ and }  (1-\frac{3r!\sqrt{\gamma}}{r(k-2)f_r(k)})^{\frac{1}{r-1}}>1-\a.
\end{equation}

Let $\beta=\max_j x_j$ and $\lambda_{max}=\max_j \lambda_j$.
If $\frac{1}{\beta}-r+3>k+d$
and by Lemma \ref{local}, 
$$p_G(\tx)\leq \lambda_\beta(G)\leq (k-2) f_r(k+d)<(k-2)[f_r(k)-\gamma]<(k-2)f_r(k)-\gamma,$$ contradicting our assumption. Hence
$\frac{1}{\beta}-r+3\leq k+d$. Solving for $\beta$ we have, by our choice of $d$, that
$$\beta\geq \frac{1}{k+r-3+d}>\frac{1}{k+r-3}-\frac{\a}{2}.$$
By lemma \ref{leader}, for some $i\in [n]$, say $i=1$, we have
\begin{equation} \label{x1lambda1}
x_1\geq \beta-2r!\sqrt{\gamma} \mbox{ and } \lambda_1\geq \lambda_{max}-2r!\sqrt{\gamma}.
\end{equation}
Since $3r!\sqrt{\gamma}<\frac{\alpha}{2}$ by our choice of $\gamma$, 
\begin{equation} \label{x1-lower}
x_1\geq \frac{1}{k+r-3}-\frac{\a}{2}-2r!\sqrt{\gamma}\geq \frac{1}{k+r-3}-\a.
\end{equation}
By \eqref{pg-lambda}, \eqref{x1lambda1}, and our assumption that $p_G(\tx)\geq (k-2)f_r(k)-\gamma$,
\begin{equation} \label{lambda1-lower}
\lambda_1\geq \lambda_{max}-2r!\sqrt{\gamma}\geq r((k-2)f_r(k)-\gamma)-2r!\sqrt{\gamma}>r(k-2)f_r(k)-3r!\sqrt{\gamma}\geq r(k-2)f_r(k)-\a.
\end{equation}
This proves item 3. Next, we prove that $\sum_{j\in V(\cL_G(i))}x_i>\frac{k+r-4}{k+r-3}-\a$.
If $r=2$ then $\lambda_1=2\sum_{j\in N_G(1)} x_j$ and hence
$\sum_{j\in V(\cL_G(1))} x_j=\frac{\lambda_1}{2}>(k-2)f_2(k)-\frac{\a}{2}\geq\frac{k-2}{k-1}-\a$.
We henceforth assume that $r\geq 3$.
Let $s=\sum_{j\in \cL_G(1)} x_j$. For each $j\in V(\cL_G(1))$ let $y_j=\frac{x_j}{s}$.
Then $\sum_{j\in V(\cL_G(1)} y_j=1$. Let $\ty$ denote the $1$-sum weight assignment on $\cL_G(1)$ defined by the $y_j$'s. Let $F'$ denote
the $(r-3)$-enlargement of $T$. Since $G$ is $F$-free, $\cL_G(1)$ is $F'$-free.
Since $F'$ is the $(r-3)$-enlargement of $T$, where $T$ is a $k$-vertex tree
satisfying the Erd\H{o}s-S\'os conjecture and $k\geq M_r\geq M_{r-1}$,
by Theorem \ref{sid-main}, $\pi_{\lambda}(F')\leq (k-2)f_{r-1}(k)$.
Hence,
\begin{equation} \label{lambda1-upper}
\lambda_1=r!\sum_{e\in\cL_G(1)}\prod_{j\in e}x_j=rs^{r-1}\cdot (r-1)!\sum_{e\in \cL_G(1)}\prod_{j\in e} y_j\leq rs^{r-1}\cdot (k-2)f_{r-1}(k).
\end{equation}
By \eqref{lambda1-lower} and \eqref{lambda1-upper}, we have
$$rs^{r-1}\cdot (k-2)f_{r-1}(k)> 
r(k-2)f_r(k)(1-\frac{3r!\sqrt{\gamma}}{r(k-2)f_r(k)}).$$
Using \eqref{f-fact}, we have
$$s^{r-1}>\left(\frac{k+r-4}{k+r-3}\right)^{r-1}(1-\frac{3r!\sqrt{\gamma}}{r(k-2)f_r(k)}).$$
Hence by our choice of $\gamma$ given in \eqref{gamma-choices}, we have 
\begin{equation} \label{xj-sum-lower}
\sum_{j\in V(\cL_G(1))} x_j=s>\frac{k+r-4}{k+r-3}\left(1-\frac{3r!\sqrt{\gamma}}{r(k-2)f_r(k)}\right)^{\frac{1}{r-1}}
>\frac{k+r-4}{k+r-3}(1-\a)>\frac{k+r-4}{k+r-3}-\a.
\end{equation}
Now, \eqref{x1-lower} and \eqref{xj-sum-lower} together prove item 2 and item 3.
\end{proof}

We need another lemma from \cite{sidorenko}. Given a graph $G$, let
$d(G)=\max_{H\subseteq G} \frac{2e(H)}{n(H)}$. So, $d(G)$ is the maximum
average degree of a subgraph of $G$ over all subgraphs of $G$.

\begin{lemma} \label{max-1} {\rm (\cite{sidorenko} Theorem 2.4)}
Let $G$ be a graph on $[t]$. Let $\ty=(y_1,\ldots, y_t)$ be a weight assignment on $G$ where $\max_i y_i=1$. Then $\frac{p_G(\ty)}{\sum_{i=1}^t y_i}\leq d(G)$.
\end{lemma}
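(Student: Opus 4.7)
The plan is to use a layer-cake decomposition. I would express both $\sum_i y_i$ and $\sum_{ij\in E(G)} y_iy_j$ as integrals over the sub-level-set subgraphs $G_s=G[\{i: y_i\ge s\}]$ for $s\in(0,1]$, and then apply the definition of $d(G)$ to each $G_s$ separately.

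First, I would trade $y_iy_j$ for $\min(y_i,y_j)$. Since every $y_i$ lies in $[0,1]$ (weights are nonnegative and $\max_i y_i=1$), we have $y_iy_j\le \min(y_i,y_j)$ for every pair. So it suffices to prove
$$2\sum_{ij\in E(G)}\min(y_i,y_j)\le d(G)\sum_i y_i.$$

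Next, for $s\in(0,1]$ set $S_s=\{i\in[t]: y_i\ge s\}$ and $G_s=G[S_s]$. The elementary identities
$$y_i=\int_0^1 \mathbf{1}[y_i\ge s]\,ds, \qquad \min(y_i,y_j)=\int_0^1 \mathbf{1}[y_i\ge s]\mathbf{1}[y_j\ge s]\,ds$$
sum to $\sum_i y_i=\int_0^1|S_s|\,ds$ and $\sum_{ij\in E(G)}\min(y_i,y_j)=\int_0^1 e(G_s)\,ds$. For each $s$, the definition of $d(G)$ applied to $G_s$ (or to $G_s$ with its isolated vertices removed, which only increases the ratio) gives $2e(G_s)\le d(G)\cdot|S_s|$. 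Integrating this pointwise bound in $s$ and combining with the two identities yields
$$p_G(\ty)\le 2\int_0^1 e(G_s)\,ds\le d(G)\int_0^1|S_s|\,ds=d(G)\sum_i y_i,$$
which is the claim.

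There is no serious obstacle here; the one step to handle with care is the elementary bound $y_iy_j\le \min(y_i,y_j)$, which uses the normalization $\max_i y_i=1$ in an essential way (without it the conclusion is false by rescaling). An equivalent phrasing is probabilistic: sample $s$ uniformly from $[0,1]$, observe $\mathbb{E}[|S_s|]=\sum_i y_i$ and $\mathbb{E}[e(G_s)]=\sum_{ij\in E}\min(y_i,y_j)$, and take expectations in the pointwise bound $2e(G_s)\le d(G)|S_s|$.
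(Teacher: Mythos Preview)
Your proof is correct. The paper does not actually prove this lemma; it is quoted as Theorem 2.4 of Sidorenko \cite{sidorenko} without argument, so there is no in-paper proof to compare against. Your layer-cake (threshold-subgraph) approach is a clean and standard way to establish the statement: the reduction $y_iy_j\le\min(y_i,y_j)$ uses exactly the normalization $\max_i y_i=1$, and the integral identities together with $2e(G_s)\le d(G)\,|S_s|$ (which follows since $G_s\subseteq G$ and $d(G)=\max_{H\subseteq G}2e(H)/n(H)$) finish the job. The only cosmetic point is that when $S_s=\emptyset$ the inequality $2e(G_s)\le d(G)|S_s|$ is $0\le 0$, so no special care is needed there.
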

\begin{corollary} \label{local-max}
Let $G$ be a graph on $[t]$. Let $\tx=(x_1,\ldots, x_t)$ be a $1$-sum weight assignment on $G$ where $\max_i x_i=\beta$. Then $p_G(\tx)\leq \beta d(G)$.
\end{corollary}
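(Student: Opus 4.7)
The plan is to reduce Corollary \ref{local-max} to Lemma \ref{max-1} by a single rescaling of the weight vector. The hypothesis of the lemma is normalized by $\max_i y_i = 1$, whereas the corollary is normalized by $\sum_i x_i = 1$ together with $\max_i x_i = \beta$; the obvious move is therefore to pass from $\tx$ to $\ty := \tx/\beta$, which rescales the maximum coordinate to $1$ while preserving the relative weights.

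With $y_i = x_i/\beta$ one reads off $\max_i y_i = 1$ and $\sum_i y_i = 1/\beta$, so $\ty$ satisfies the hypothesis of Lemma \ref{max-1}. Applying the lemma directly gives
$$p_G(\ty) \;\leq\; d(G)\cdot \sum_{i=1}^t y_i \;=\; \frac{d(G)}{\beta}.$$

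To transfer this back to $\tx$, I would use the fact that since $G$ is a $2$-graph, $p_G(\tz) = 2\sum_{\{i,j\}\in G} z_i z_j$ is a homogeneous polynomial of degree $2$ in the variables. Hence $p_G(\tx) = p_G(\beta\ty) = \beta^2 p_G(\ty)$, and combining with the displayed inequality yields
$$p_G(\tx) \;=\; \beta^2 p_G(\ty) \;\leq\; \beta^2 \cdot \frac{d(G)}{\beta} \;=\; \beta\, d(G),$$
which is the desired conclusion. There is no genuine obstacle in this argument: the corollary is essentially a homogeneity computation packaged on top of Lemma \ref{max-1}, and its utility in the sequel is that it quantitatively bounds $\lambda_\beta(G)$ for a $2$-graph $G$ in terms of its maximum average degree, which is precisely the input needed for the tree-Lagrangian stability analysis underlying Theorem \ref{enlarged-tree-stable}.
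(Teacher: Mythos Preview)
Your proof is correct and is essentially identical to the paper's own argument: the paper also sets $y_i=x_i/\beta$, applies Lemma~\ref{max-1}, and uses the degree-$2$ homogeneity $p_G(\tx)=\beta^2 p_G(\ty)$ to conclude.
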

\begin{proof} For each $i\in [t]$, let $y_i=x_i/\beta$. Then $\max_i y_i=1$ and
$\sum_i y_i=1/\beta$. Using Lemma \ref{max-1}, we have $p_G(\tx)=\beta^2 p_G(\ty)
\leq \beta^2 d(G)\sum_i y_i=\beta d(G)$.
\end{proof}

\begin{lemma} \label{join-degree}
Let $H$ be a graph with average degree $d$ and $G$ obtained from $H$ by adding a new vertex and making it adjacent to all of $V(H)$. Then $G$ has average degree
at least $d+1$.
\end{lemma}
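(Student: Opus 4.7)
The claim is an elementary counting exercise once one unravels the definition of average degree. Let $n=|V(H)|$ and $e=e(H)$, so by hypothesis $d=2e/n$. The new graph $G$ has $n+1$ vertices and, since the new vertex is joined to all $n$ vertices of $H$, $e(G)=e+n$ edges. Hence the average degree of $G$ equals
\begin{equation*}
\frac{2e(G)}{n+1}=\frac{2e+2n}{n+1}=\frac{nd+2n}{n+1}=\frac{n(d+2)}{n+1}.
\end{equation*}

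The plan is to show this quantity is at least $d+1$. Cross-multiplying, this is equivalent to $n(d+2)\ge (n+1)(d+1)$, which simplifies to $n\ge d+1$. So the whole proof reduces to verifying $n\ge d+1$.

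This last inequality is immediate from the fact that $H$ is a simple graph on $n$ vertices: every vertex has degree at most $n-1$, so the average degree satisfies $d\le n-1$, i.e., $n\ge d+1$. Substituting back gives $\tfrac{n(d+2)}{n+1}\ge d+1$, as required. There is no real obstacle here; the only subtlety is recognizing that the underlying assumption of simplicity (so that the maximum degree is bounded by $n-1$) is what forces the bound to be tight when $H$ is complete, and this is exactly the case in which both $d+1=n$ and the new average degree equals $n$.
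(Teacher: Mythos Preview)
Your proof is correct and follows essentially the same approach as the paper's: both compute the new average degree as $\frac{n(d+2)}{n+1}$ (the paper uses $p$ for $n$) and reduce the inequality to $n\ge d+1$, which holds since a simple graph on $n$ vertices has average degree at most $n-1$. The only difference is that you spell out why $n\ge d+1$, whereas the paper simply asserts it as ``clearly $p\ge d+1$.''
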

\begin{proof} Suppose $H$ has $p$ vertices. Clearly $p\geq d+1$. We have $n(G)=p+1$ and $e(G)=pd/2+p$.
So $G$ has average degree $\frac{2e(G)}{n(G)}=\frac{pd+2p}{p+1}\geq \frac{pd+p+d+1}{p+1}=d+1$.
\end{proof}

\begin{lemma} \label{average-degree-stability}
Let $d$ be a positive integer. 
Let $0<\ve<1$ be a real. There exists a real $\delta_d(\ve)>0$ such that if $G$ a graph on $[t]$ with $d(G)\leq d$ and $\tx=(x_1,\ldots x_t)$ is a $1$-sum weight assignment on $G$ with $p_G(\tx)\geq \frac{d}{d+1}-\delta_d$
then $\exists I\subseteq [t]$ with $|I|\leq d+1$ such that $\sum_{i\in I} x_i\geq 1-\ve$.
\end{lemma}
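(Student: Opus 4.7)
The plan is induction on $d$.

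\textbf{Base case $d=1$:} $d(G)\le 1$ forces every component of $G$ to be an isolated vertex or a single edge (since $P_3$ already has $d=4/3$), so $G$ is a matching. If its edges are $e_1,\dots,e_m$ with total $\tx$-weight $s_i$ on $e_i$, then $p_G(\tx)=\sum_i 2x_{a_i}x_{b_i}\le \tfrac12\sum_i s_i^2\le\tfrac12\max_i s_i$, so $\delta_1(\ve):=\ve/2$ and $I:=$ endpoints of the heaviest edge work.

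\textbf{Inductive step} $(d\ge 2)$: Assume the lemma holds for $d-1$ with some function $\delta_{d-1}(\cdot)$. I pick a ``pivot'' vertex $v_1$ and recurse on its link. By Corollary~\ref{local-max}, $\max_j x_j\ge p_G(\tx)/d\ge \tfrac{1}{d+1}-\tfrac{\delta_d}{d}$, and by \eqref{pg-lambda}, $\max_j\lambda_j\ge 2p_G(\tx)\ge\tfrac{2d}{d+1}-2\delta_d$. Lemma~\ref{leader} (with $\eta=\delta_d$, $r=2$) produces a single $v_1$ realizing both maxima within $4\sqrt{\delta_d}$, so $x_{v_1}\ge\tfrac{1}{d+1}-C\sqrt{\delta_d}$ and $s:=\sum_{j\in N(v_1)}x_j=\lambda_{v_1}/2\ge\tfrac{d}{d+1}-C\sqrt{\delta_d}$ for some $C=C(d)$. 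Writing $B:=V(G)\setminus N[v_1]$ and $t:=\sum_{j\in B}x_j$, the identity $x_{v_1}+s+t=1$ combined with these lower bounds forces the two-sided bounds $x_{v_1}=\tfrac{1}{d+1}+O(\sqrt{\delta_d})$, $s=\tfrac{d}{d+1}+O(\sqrt{\delta_d})$, and $t=O(\sqrt{\delta_d})$.

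Let $H:=G[N(v_1)]$ with normalized weights $y_j:=x_j/s$. Lemma~\ref{join-degree} applied to $G[N[v_1]]$, in which $v_1$ is joined to all of $N(v_1)$, gives $d(H)\le d-1$. Decompose $p_G(\tx)=2x_{v_1}s+p_H(\tx)+p_{12}+p_2$, where $p_{12}\le 2st$ (bipartite bound) and $p_2\le t^2$ (sum over pairs in $B$ with total weight $t$); inserting the near-exact values for $x_{v_1}$, $s$, $t$ yields $p_H(\tx)\ge\tfrac{d(d-1)}{(d+1)^2}-O(\sqrt{\delta_d})$ and hence $p_H(\ty)=p_H(\tx)/s^2\ge\tfrac{d-1}{d}-O(\sqrt{\delta_d})$. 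Choose $\delta_d(\ve)$ so small that this error is at most $\delta_{d-1}(\ve/2)$ and also $t\le\ve/2$; the induction hypothesis applied to $(H,\ty)$ then furnishes $I_1\subseteq N(v_1)$ with $|I_1|\le d$ and $\sum_{j\in I_1}y_j\ge 1-\ve/2$. Then $I:=I_1\cup\{v_1\}$ has $|I|\le d+1$ and $\sum_{j\in I}x_j\ge x_{v_1}+s(1-\ve/2)=(1-t)-s\ve/2\ge 1-\ve$.

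The main obstacle is the bookkeeping of the $\sqrt{\delta_d}$-errors: I need $t$ small enough to survive the final inequality and $p_H(\ty)$ close enough to $(d-1)/d$ after division by $s^2$ to apply the induction hypothesis. Both work because the near-tight values $x_{v_1}\approx\tfrac{1}{d+1}$ and $s\approx\tfrac{d}{d+1}$ are forced simultaneously by the joint approximate maximality of $x_{v_1}$ and $\lambda_{v_1}$ given by Lemma~\ref{leader}.
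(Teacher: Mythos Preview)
Your proof is correct and follows essentially the same route as the paper's: induction on $d$, base case via the matching structure forced by $d(G)\le 1$, and in the inductive step use Lemma~\ref{leader} to locate a pivot vertex $v_1$ with near-maximal $x_{v_1}$ and $\lambda_{v_1}$, bound $d(G[N(v_1)])\le d-1$ via Lemma~\ref{join-degree}, show the renormalized weight on $N(v_1)$ gives $p_H(\ty)$ close to $(d-1)/d$, and recurse. Your decomposition $p_G(\tx)=2x_{v_1}s+p_H(\tx)+p_{12}+p_2$ and the final identity $x_{v_1}+s=1-t$ are slightly cleaner than the paper's corresponding estimates, but the substance is identical.
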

\begin{proof}
We use induction on $d$. For the basis step, let $d=1$. 
Let $\delta_1(\ve)=\frac{\ve}{2}$. Suppose $d(G)\leq 1$ and $p_G(\tx)\geq \frac{1}{2}-\delta_1$.
By our assumption about $G$, each nontrivial component of $G$ is a single edge.  Suppose  that there are $s$ nontrivial components with total vertex weights $w_1,\ldots, w_s$, respectively, where without loss of generality, suppose $w_1=\max_j w_j$.
We have

$$\frac{1}{2}-\delta_1(\ve) \leq p_G(x)\leq 2!\sum_{i=1}^s \frac{w_i^2}{4}\leq \frac{1}{2}w_1(w_1+w_2+\ldots+w_s)\leq \frac{1}{2}w_1.$$ 

Hence $w_1\geq 1-2\delta_1(\ve)=1-\ve$, implying that there exists $I\subseteq [t], |I|=2$ with $\sum_{i\in I} x_i\geq 1-\ve$.

For the induction step, let $d\geq 2$. Choose a small real $\a$ such that
$$0<\a<\frac{\ve}{4}$$
and
\begin{equation}\label{alpha-definition1}
\left(\frac{d}{d+1}+\a\right)^{-2}\cdot\left(\frac{d(d-1)}{(d+1)^2}-7\a \right)>\frac{d-1}{d}-\delta_{d-1}(\frac{\ve}{2}).
\end{equation}
Choose $0<\delta_d<\a-2\a^2$ to be be sufficiently small such that $$\delta_d+2r!\sqrt{\delta_d}<\a.$$ 

Suppose $p_G(\tx)\geq \frac{d}{d+1}-\delta_d$. 
By Lemma \ref{leader}, there exists $i\in [t]$, say $i=1$, such that
$x_1\geq \max_i x_i-2r!\sqrt{\delta_d}$ and $\lambda_1\geq \max_i \lambda_i -2r!\sqrt{\delta_d}$. By \eqref{pg-lambda},
we have $\max_i \lambda_i\geq 2p_G(\tx)\geq \frac{2d}{d+1}-2\delta_d$
and hence $\lambda_1\geq \frac{2d}{d+1}-2\delta_d-2r!\sqrt{\delta_d}$. 
Since $\lambda_1=2\sum_{j\in N_G(1)}x_j$, this also yields
\begin{equation} \label{neighborhood-lower}
\sum_{j\in N_G(1)}x_j\geq \frac{d}{d+1}-\delta_d-r!\sqrt{\delta_d}\geq
\frac{d}{d+1}-\a.
\end{equation}
Hence $x_1\leq \frac{1}{d+1}+\a$.
Let $\beta=\max_i x_i$.  By Corolloary \ref{local-max}, we have
$\frac{d}{d+1}-\delta_d\leq p_G(\tx) \leq \beta d$.
Hence, $\beta\geq \frac{1}{d+1}-\frac{\delta_d}{d+1}$ and thus
\begin{equation} \label{x1-lower2}
x_1\geq \beta-2r!\sqrt{\delta_d}\geq \frac{1}{d+1}-\frac{\delta_d}{d+1}-2r!\sqrt{\delta_d}\geq
\frac{1}{d+1}-\a.
\end{equation}
Let $N=N_G(1)$ and $\overline{N}=[t]\setminus (N\cup \{1\})$.
By \eqref{neighborhood-lower} and \eqref{x1-lower2},
\begin{equation} \label{non-N}
\sum_{j\in \overline{N}} x_j<2\a.
\end{equation}
Since $1$ is adajcent to all of $N$ and $d(G)\leq d$, by Lemma \ref{join-degree},
$d(G[N])\leq d-1$. Let $s=\sum_{j\in N} x_j$. By \eqref{neighborhood-lower} and 
\eqref{x1-lower2}, we have $\frac{d}{d+1}-\a\leq
s\leq \frac{d}{d+1}+\a$. Since $\lambda_1=2s$, we also have
$2(\frac{d}{d+1}-\a)\leq \lambda_1\leq 2(\frac{d}{d+1}+\a)$.
For each $j\in N$ let $y_j=\frac{x_j}{s}$.
Then $\sum_{j\in N} y_j=1$. Let $\ty$ denote the $1$-sum weight assignment on $G[N]$ given by the $y_j$'s. We have, using the upper bounds on $\lambda_1,x_1$, and
\eqref{non-N},
\begin{eqnarray*}
s^2p_{G[N]}(\ty)=2\sum_{\{i,j\}\in G[N]}  x_ix_j&\geq& p_G(\tx)-\lambda_1 x_1-2 \sum_{j\in [t]} x_j \sum_{j\in \overline{N}} x_j\\
&\geq&(\frac{d}{d+1}-\delta_d)-2(\frac{d}{d+1}+\a)(\frac{1}{d+1}+\a)-4\a\\
&=&\frac{d(d-1)}{(d+1)^2}-\delta_d-6\a-2\a^2\\
&\geq &\frac{d(d-1)}{(d+1)^2}-7\a.
\end{eqnarray*}
Since $s\leq\frac{d}{d+1}+\a$, this yields
\begin{eqnarray*}
\left(\frac{d}{d+1}+\a\right)^2 p_{G[N]}(\ty)\geq  \frac{d(d-1)}{(d+1)^2}-7\a.
\end{eqnarray*}
By \eqref{alpha-definition1}, this yields
\begin{equation} \label{pg-lower2}
p_{G[N]}(\ty)\geq \frac{d-1}{d}-\delta_{d-1}(\frac{\ve}{2}).
\end{equation}
Since $d(G[N])\leq d-1$, by \eqref{pg-lower2} and induction hypothesis,
there exists $J\subseteq N$ with $|J|\leq d$ such that $\sum_{j\in J} y_j\geq 1-\frac{\ve}{2}$. Hence 
\begin{equation} \label{sum-lower1}
\sum_{j\in J} x_j\geq s\sum_{j\in J} y_j\geq s(1-\frac{\ve}{2})
\geq (\frac{d}{d+1}-\a)(1-\frac{\ve}{2})>\frac{d}{d+1}-\alpha-\frac{\ve}{2}.
\end{equation}

Let $I=J\cup \{1\}$. By \eqref{x1-lower2} and \eqref{sum-lower1},
we have $$\sum_{i\in I} x_i\geq \frac{1}{d+1}-\a+\frac{d}{d+1}-\a-\frac{\ve}{2}\geq 1-\ve.$$
This completes the induction step and the proof.
\end{proof}

In the next lemma, we use Lemma \ref{average-degree-stability} to
establish stability of the Lagrangian function of $T$-free graphs  where $T$ is a tree
that satisfies the Erd\H{o}s-S\'os conjecture. Note that such stability obviously does not exist for the Lagrangian function of  $K_k$-free graphs. To see that consider any complete $(k-1)$-partite graph $G$, which is $K_k$-free. Any  weight assignment $\tx$ on $G$ in which the total vertex weight on each part is $\frac{1}{k-1}$ satisfies $p_G(\tx)=\frac{k-2}{k-1}=\pi_\lambda(K_k)$, though any two such $\tx$'s can be very different.

\begin{lemma} \label{tree-lemma}
Let $k\geq 3$ be an integer. Let $T$ be a $k$-vertex tree that satisfies the Erd\H{o}s-S\'os conjecture. 
Let $0<\ve<1$ be a real. There exists a real $\delta_k(\ve)>0$ such that the following is true: If $G$ is a  $T$-free graph on $[t]$ and $\tx=(x_1,\ldots x_t)$ is a $1$-sum weight assignment on $G$ such that $p_G(\tx)\geq \frac{k-2}{k-1}-\delta_k$,
then $\exists I\subseteq [t]$ with $|I|\leq k-1$ such that $\sum_{i\in I} x_i\geq 1-\ve$.
\end{lemma}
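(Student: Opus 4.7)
The plan is to reduce this lemma essentially immediately to the already-proved Lemma \ref{average-degree-stability}, by observing that the Erd\H{o}s-S\'os hypothesis gives a clean upper bound on the maximum average degree of $G$.

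First I would observe that if $T$ is a $k$-vertex tree satisfying the Erd\H{o}s-S\'os conjecture, then any $T$-free graph $H$ on $m$ vertices has at most $m(k-2)/2$ edges. Since the class of $T$-free graphs is closed under taking subgraphs, every subgraph $H$ of the given $T$-free graph $G$ is also $T$-free, so $2e(H)/n(H)\leq k-2$. Taking the maximum over $H\subseteq G$ yields the key inequality
\[
d(G)\leq k-2,
\]
where $d(G)$ is the maximum average degree of a subgraph of $G$, matching the quantity used in Lemma \ref{average-degree-stability}.

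Then I would simply set $d=k-2$ and define $\delta_k(\ve):=\delta_{d}(\ve)=\delta_{k-2}(\ve)$, where $\delta_{k-2}(\ve)$ is the constant provided by Lemma \ref{average-degree-stability}. The hypothesis $p_G(\tx)\geq \frac{k-2}{k-1}-\delta_k$ is precisely the hypothesis $p_G(\tx)\geq \frac{d}{d+1}-\delta_d$ of Lemma \ref{average-degree-stability}, and together with $d(G)\leq d=k-2$ this lemma produces a set $I\subseteq[t]$ with $|I|\leq d+1=k-1$ and $\sum_{i\in I}x_i\geq 1-\ve$, which is exactly the desired conclusion.

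There is no real obstacle here: the whole content was packaged into the inductive Lemma \ref{average-degree-stability}, and this lemma is just the cosmetic application of that general statement in the special case $d=k-2$, using Erd\H{o}s-S\'os to translate the combinatorial hypothesis ``$T$-free'' into the quantitative hypothesis ``$d(G)\leq k-2$''. One only needs to remark (as the paper does just before the statement) that the analogue for $K_k$-free graphs is genuinely false, so the role of the Erd\H{o}s-S\'os hypothesis, via the max-average-degree reduction, is essential and cannot be replaced by the Tur\'an density $\pi(T)=\pi_\lambda(T)=\tfrac{k-2}{k-1}$ alone.
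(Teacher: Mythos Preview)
Your proposal is correct and matches the paper's proof essentially verbatim: the paper also observes that the Erd\H{o}s--S\'os hypothesis gives $d(G)\leq k-2$ for any $T$-free $G$, and then applies Lemma~\ref{average-degree-stability} with $d=k-2$.
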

\begin{proof}
Since $T$ satisfies the Erd\H{o}s-S\'os conjecture and $G$ is $T$-free, we have $d(G)\leq k-2$.
The lemma follows from Lemma \ref{average-degree-stability} with $d=k-2$.
\end{proof}

We can now use Lemma \ref{tree-lemma} to establish stability of the Lagrangian function of an enlarged tree.

\begin{lemma} \label{lagrangian-stable}
Let $k\geq 3, r\geq 2$ be integers where $k\geq M_r$.
Let $T$ be a $k$-vertex tree that satisfies the Erd\H{o}s-S\'os conjecture.
Let $F$ be the $(r-2)$-fold enlargement of $T$. 
Let $\ve>0$ be any real. There exists a real $\hd_r=\hd_r(\ve)>0$ such that following holds.
Let $G$ be a $F$-free $r$-graph on $[t]$
and $\tx=(x_1,\ldots, x_t)$ a $1$-sum weight assignment on $G$ such that $p_G(\tx)\geq \ckr-\hd_r$. Then there exists $I\subseteq [t]$ with $|I|\leq r+k-3$ such that $\sum_{i\in I} x_i\geq 1-\ve$. 
\end{lemma}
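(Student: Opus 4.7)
The plan is to induct on $r$. For the base case $r=2$, the $0$-fold enlargement of $T$ is $T$ itself, so $G$ is $T$-free and $\ckr = \frac{k-2}{k-1}$. Lemma \ref{tree-lemma} then yields $\hd_2(\ve) := \delta_k(\ve)$ and the desired $I$ with $|I|\leq k-1 = r+k-3$.

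For the inductive step, assume the lemma holds with $r$ replaced by $r-1$, where $r\geq 3$. The idea is to use Lemma \ref{representing-vertex} to locate a ``leader'' vertex $i$ whose weight is approximately $\tfrac{1}{k+r-3}$ and whose link graph carries almost all of the remaining weight, then rescale the weights on $V(\cL_G(i))$ to a $1$-sum assignment and apply the induction hypothesis to $\cL_G(i)$. Specifically, given $\ve>0$, I would pick $\ve' = \ve'(\ve) > 0$ and $\alpha = \alpha(\ve) > 0$ small enough (dependent only on $\ve, k, r$) so that whenever $|x_i - \tfrac{1}{k+r-3}| < \alpha$, $|s - \tfrac{k+r-4}{k+r-3}| < \alpha$ where $s = \sum_{j\in V(\cL_G(i))} x_j$, and $\sum_{j\in I'} y_j \geq 1-\ve'$ for some $I' \subseteq V(\cL_G(i))$ with $|I'|\leq (r-1)+k-3 = r+k-4$, the set $I = I' \cup \{i\}$ satisfies $\sum_{j\in I} x_j \geq x_i + s(1-\ve') \geq 1 - \ve$ and $|I| \leq r+k-3$. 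The relation $x_i + s \geq 1 - 2\alpha$ plus $s(1-\ve') \geq s - \ve'$ makes it clear that $\alpha, \ve' \leq \ve/4$ suffices.

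It remains to arrange that the rescaled weights $y_j = x_j/s$ on the link graph satisfy the hypothesis of the induction hypothesis, namely $p_{\cL_G(i)}(\ty) \geq (k-2)f_{r-1}(k) - \hd_{r-1}(\ve')$. Since $\lambda_i = rs^{r-1} \cdot p_{\cL_G(i)}(\ty)$ (this is the computation used in the proof of Lemma \ref{representing-vertex}, item 2), and since Lemma \ref{representing-vertex} gives $\lambda_i \geq r(k-2)f_r(k) - \alpha$, we get
\begin{equation*}
p_{\cL_G(i)}(\ty) \;\geq\; \frac{r(k-2)f_r(k) - \alpha}{r\,s^{r-1}}.
\end{equation*}
Using $s \leq \tfrac{k+r-4}{k+r-3} + \alpha$ together with the identity $f_r(k)/f_{r-1}(k) = \bigl(\tfrac{k+r-4}{k+r-3}\bigr)^{r-1}$ from \eqref{f-fact}, this lower bound is at least $(k-2)f_{r-1}(k) - O(\alpha)$. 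Choosing $\alpha$ small enough (depending on $\ve'$, hence on $\ve$) to make the $O(\alpha)$ error smaller than $\hd_{r-1}(\ve')$, and finally choosing $\hd_r(\ve)$ to be the value of $\gamma(\alpha)$ provided by Lemma \ref{representing-vertex}, completes the inductive step. Since $\cL_G(i)$ is an $(r-1)$-graph free of the $(r-3)$-fold enlargement of $T$, and $k \geq M_r \geq M_{r-1}$, the induction hypothesis applies.

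The main obstacle is the bookkeeping of the three error parameters $\ve, \alpha, \ve'$ and verifying that the approximation $p_{\cL_G(i)}(\ty) \geq (k-2)f_{r-1}(k) - \hd_{r-1}(\ve')$ really does reduce to an estimate of the form $\alpha \ll \hd_{r-1}(\ve')$; this needs the identity \eqref{f-fact} together with a first-order expansion of $s^{-(r-1)}$ near $\tfrac{k+r-4}{k+r-3}$. Everything else is routine once the leader vertex $i$ is in hand.
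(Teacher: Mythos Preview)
Your proposal is correct and follows essentially the same route as the paper: induct on $r$, with the base case $r=2$ handled by Lemma~\ref{tree-lemma}, and in the inductive step invoke Lemma~\ref{representing-vertex} to find the leader vertex, rescale to a $1$-sum assignment on the link $(r-1)$-graph, use \eqref{f-fact} together with the bounds on $s$ and $\lambda_i$ to verify the inductive hypothesis applies, and take $I = I'\cup\{i\}$. The only cosmetic difference is that the paper fixes $\ve' = \ve/2$ and encodes the ``$\alpha$ small enough'' requirement in a single explicit inequality analogous to your $O(\alpha) < \hd_{r-1}(\ve')$ condition.
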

\begin{proof}
We use induction on $r$. The basis step $r=2$ was established by Lemma \ref{tree-lemma}. For the induction step, let $r\geq 3$. Let $\ve>0$ be given.
Let $\alpha$ be a real such that
$$0<\alpha<\frac{\ve}{4},$$ and
\begin{equation} \label{alpha-definition2}
r^{-1}\left(\frac{k+r-4}{k+r-3}+\a\right)^{-(r-1)}\cdot\left[r\frac{(k+r-4)_{r-1}}{(k+r-3)^{r-1}}-\a\right]>\frac{(k+r-4)_{r-1}}{(k+r-4)^{r-1}}-\hd_{r-1}(\frac{\ve}{2}). 
\end{equation}
Let $\hd_r=\gamma(\a)$, where the function $\gamma$ is given in Lemma 
\ref{representing-vertex}.  Suppose $\tx$ is a $1$-sum weight assignment on $G$ with
$p_G(\tx)\geq \ckr-\hd_r$.
By Lemma \ref{representing-vertex}, there exists $i\in [t]$, say $i=1$, such that
with $s=\sum_{j\in V(\cL_G(1)} x_j$ we have
\begin{equation} \label{x1-facts2}
|x_1-\frac{1}{k+r-3}|<\a, \quad |s-\frac{k+r-4}{k+r-3}|<\a, \quad \mbox { and } \lambda_1>r \frac{(k+r-3)_r}{(k+r-3)^r} - \a.
\end{equation}
For each $j\in V(\cL_G(1))$ let $y_j=\frac{x_j}{s}$. Then $\sum_{j\in V(\cL_G(1))} y_j=1$. Let $\ty$ denote the $1$-sum weight function
on $\cL_G(1)$ defined by the $y_j$'s. We have as usual
$$\lambda_1=r!\cdot \sum_{e\in \cL_G(1)} \prod_{j\in e} x_j = r s^{r-1} (r-1)!\cdot \sum_{e\in \cL_G(1)}\prod_{j\in e} y_j\leq rs^{r-1}p_{\cL_G(1)}(\ty).$$
Hence, by \eqref{x1-facts2}, we have
$$r\left(\frac{k+r-4}{k+r-3}+\a\right)^{r-1} p_{\cL_G(1)}(\ty)>r\frac{(k+r-3)_r}{(k+r-3)^r}-\a=r\frac{(k+r-4)_{r-1}}{(k+r-3)^{r-1}}-\a.$$

By \eqref{alpha-definition2}, this yields
\begin{equation}\label{pg-lower3}
p_{\cL_G(1)}(\ty)>\frac{(k+r-4)_{r-1}}{(k+r-4)^{r-1}}-\hd_{r-1}(\frac{\ve}{2})=(k-2)f_{r-1}(k)-\hd_{r-1}(\frac{\ve}{2}).
\end{equation}
Let $F'$ denote the $(r-3)$-enlargement of $T$. Since $G$ is $T$-free, clearly
$\cL_G(1)$ is $T'$-free. Since $F'$ is the $(r-3)$-enlargement of $T$, where
$T$ is $k$-vertex tree satisfying the Erd\H{o}s-S\'os conjecture, and $k\geq M_r\geq M_{r-1}$, by \eqref{pg-lower3} and induction hypothesis,
there exists $J\subseteq V(\cL_G(1))$ such that $\sum_{j\in J} y_j\geq 1-\frac{\ve}{2}$.
Hence 
\begin{equation} \label{sum-lower2}
\sum_{j\in J} x_j\geq s(1-\frac{\ve}{2})\geq (\frac{k+r-4}{k+r-3}-\a)(1-\frac{\ve}{2})\geq \frac{k+r-4}{k+r-3}-\a-\frac{\ve}{2}.
\end{equation}
Let $I=J\cup \{1\}$. We have  by \eqref{x1-facts2} and \eqref{sum-lower2} 
$$\sum_{i\in I}x_i\geq \frac{1}{k+r-3}-\a+\frac{k+r-4}{k+r-3}-\a-\frac{\ve}{2}\geq 1-\ve.$$
This completes the induction and the proof.
\end{proof}

{\bf Proof of Theorem \ref{enlarged-tree-stable}:}
Let $\ve>0$ be given.  We may assume $\ve$ to be sufficiently small so that
$\ve<\gamma_0$, where $\gamma_0$ is defined in Theorem \ref{stability-general}.
Let $\gamma=\min\{\frac{1}{2}\hd_r(\frac{\ve}{2}),\frac{\ve}{2}\}$, where $\hd_r$ is given in Lemma \ref{lagrangian-stable}.  By our definition, $\gamma<\gamma_0$.
Let $\delta$ and $n_0$ be the constants
guaranteed by Theorem \ref{stability-general} for the above defined $\gamma$.
 Let $n$ be sufficiently large so that 
$n\geq n_0$ and that $n$ satisfies some other inequalities given below.
Let $G$ be any $\cH_{k+r-2}^F$-free $r$-graph on $[n]$ with
$|G|>(\ckr-\delta)\binom{n}{r}$.
Let $G^*$ be the final graph produced by Algorithm \ref{symm-clean} with
threshold $\ckr-\gamma$. By Theorem \ref{stability},
$G^*$ is $\cH_{k+r-2}^F$-free, $n(G^*)\geq (1-\gamma)n$, and
$G^*$ is $(\ckr-\gamma)$-dense. Let $N=n(G^*)$. Since $G^*$ is 
$(\ckr-\gamma)$-dense, we have 
\begin{equation} \label{G*-lower2}
|G^*|\geq (\ckr-\gamma)\binom{N}{r}. 
\end{equation}
Suppose $G$ has $s$ equivalence classes
$A_1,\ldots, A_s$. Let $S$ consist of one vertex from each equivalence class of $G^*$.  Without loss of generality, suppose $S=[s]$.
 Then $G^*[S]$ covers pairs and $G^*$ is a blowup of
$G^*[S]$. For each $i\in [s]$, let $x_i=|A_i|/N$.
Then $\sum_i x_i=1$.
So $\tx=(x_1,\ldots, x_s)$ is a $1$-sum weight assignment on $G^*[S]$. 
Also,$$p_G(\tx)=r!\sum_{e\in G}\prod_{i\in e} x_i =\frac{r!}{N^r} |G^*|\geq 
\ckr-2\gamma\geq\ckr-\hd_r(\frac{\ve}{2}),$$
where the two inequalities follows from \eqref{G*-lower2}, our definition of $\gamma$, and the assumption that $N$ is sufficiently large.
By Lemma \ref{lagrangian-stable}, there exists $I\subseteq [s]$, where $|I|\leq k+r-3$ such
that $\sum_{i\in I} x_i\geq 1-\frac{\ve}{2}$. Let $W=\bigcup_{i\in I} A_i$.
Then $$|W|\geq (1-\frac{\ve}{2})N\geq (1-\gamma_0)N.$$
By Theorem \ref{stability-general}, $G[W]$ is $(k+r-3)$-partite.
Since $$|W|\geq (1-\frac{\ve}{2})N\geq (1-\frac{\ve}{2})(1-\gamma)n\geq
(1-\frac{\ve}{2})^2 n>n-\ve n,$$
$G$ can be made $(k+r-3)$-partite by deleting at most $\ve n$ vertices. So, $\cH_{k+r-2}^F$ is $(k+r-3)$-stable.
\qed

%%%%%%%%%%%%%%%%%%%%%%%%%%%%%%%%%%%%%%%%%%%%%%%%%%%%%%%


\begin{thebibliography}{99}

\bibitem{AKSS} M. Ajtai, Koml\'os, Simonovits, Szemer\'edi, unpublished.

\bibitem{bollobas} B. Bollob\'as, Three-graphs without two triples whose symmetric
difference is contained in a third, \emph{Disc. Math.} \textbf{8} (1974), 21-24.

\bibitem{erdos-complete} P. Erd\H{o}s, On extremal problems of graphs
and generalized graphs, \emph{Israel J. Math.} \textbf{2} (1964), 183-190.

\bibitem{erdos-matching} P. Erd\H{o}s, A problem on inpdenendent $r$-tuples,
{\emph Ann. Univ. Sci. Budpest} \textbf{8} (1965), 93-95.

\bibitem{ES}  P. Erd\H{o}s, M. Simonovits: A limit theorem in graph theory, 
\emph{Studia. Sci. Math. Hungar.} \textbf{1} (1966), 51-57.

\bibitem{frankl-matching} P. Frankl, Improved bounds for Erd\H{o}s' matching
conjecture, \emph{J. Combin. Th. Ser. A } \textbf{120}  (2013), 1068-1072.

\bibitem{FF-F5} P. Frankl, Z. F\"uredi, A new generalization of the Erd\H{o}s-Ko-Rado 
theorem, \emph{Combinatorica} \textbf{3} (1983), 341-349.

\bibitem{FF} P. Frankl, Z. F\"uredi, Extremal problems whose solutions are the
blowups of the small Witt-designs, \emph{J. Combin. Th. Ser. A} \textbf{52} (1989),
129-147.

\bibitem{FOS} Z. F\"uredi, O. Pikhurko, M. Simonovits,  On triple systems with independent
neighbourhoods, \emph{Combin. Probab. Comput.} \textbf{14} (2005), 795-813.

\bibitem{HK} D. Hefetz, P. Keevash, A hypergraph Tur\'an theorem via 
lagrangians of intersecting families, \emph{J. Combin. Th. Ser. A} \textbf{120}
(2013), 2020-2038.


\bibitem{katona} G.O.H. Katona, Extremal problems for hypergraphs, in: Combinatorics, in: \emph{Math. Cent. Tracts}, \textbf{56} (1974), 13--42.

\bibitem{keevash} P. Keevash, Hypergraph Tur\'an Problems, Surveys in Combinatorics, Cambridge University Press, 2011, 83--140.

\bibitem{KM} P. Keevash, D. Mubayi, Stability theorems for cancellative hypergraphs,
\emph{J. Combin. Th. Ser. B} \textbf{92}  (2004), 163-175.

\bibitem{mubayi} D. Mubayi, A hypergraph extension of Tur\'an's theorem, \emph{J. Combin. Theory Ser.B}\textbf{96}
(2006), 122--134.

\bibitem{MP} D. Mubayi and O. Pikhurko, A new generalization of Mantel's theorem to $k$-graphs,
\emph{J. Combin. Theory Ser. B} \textbf{97} (2007), 669--678.


\bibitem{MS} T.S. Motzkin, E.G. Strauss, Maxima for  graphs and a new proof
of a theorem of Tur\'an, \emph{Canadian J. Math.} \textbf{17} (1965), 533-540.

\bibitem{NY} S. Norin, L. Yepremyan, Tur\'an number of generalized triangles,
arXiv:1501.01913v1.

\bibitem{pikhurko} O. Pikhurko, An exact Tur\'an result for the generalized triangle,
\emph{Combinatorica} \textbf{28} (2008), 187-208.

\bibitem{p} O. Pikhurko, Exact computation of the hypergraph Tur\'an function for expanded complete
$2$-graphs, \emph{J. Combin. Th. Ser. B} \textbf{103} (2013), 220-225.

\bibitem{shearer} J. Shearer,  A new construction for cancellative families of sets,
\emph{Electronic J. Combinatorics} \textbf{3} (1) (1996), Research paper 15, 3 pp (electronic).

\bibitem{sidorenko2} A.F. Sidorenko, On the maximal number of edges in a homogeneous hypergraph that does not contain prohibited subgraphs,
\emph{Mat. Zametki} \textbf{41} (1987), 433-455.

\bibitem{sidorenko} A. F. Sidorenko, Asymptotic solution for a new class of forbidden $r$-graphs,
\emph{Combinatorica} \textbf{9} (1989), 207--215.

\bibitem{sim-68} M. Simonovits,
A method for solving extremal problems in graph theory, Theory
of Graphs, Proc. Coll. Tihany, Hungary (ed. P. Erd\H{o}s and G.
Katona), Acamdeic Press, New York (1968), 279--319.


%%%%%%%%%%%%%%%%%%%%%%%%%%%%%%%%%%%%%%%%%%%%%%%%


\end{thebibliography}
\end{document}